\newtheorem{thm}{Theorem}[section]
\newtheorem{prop}[thm]{Proposition}
\newtheorem{lemma}[thm]{Lemma}
\newtheorem{cor}[thm]{Corollary}
\newtheorem{defn}[thm]{Definition}
\newtheorem{ex}[thm]{Example}
\newtheorem{rmk}[thm]{Remark}
\numberwithin{equation}{section}
\newcommand{\C}{\mathbb C}
\newcommand{\R}{\mathbb R}
\newcommand{\Z}{\mathbb Z}
\def \mc{\mathcal}
\begin{document}

\title[Toric principal bundles]{Classification, reduction and stability of toric principal bundles}

\author[J. Dasgupta]{Jyoti Dasgupta}
\address{Department of Mathematics, Indian Institute of Science Education and Research Pune, India}
\email{jdasgupta.maths@gmail.com}

\author[B. Khan]{Bivas Khan}
\address{Department of Mathematics, Indian Institute of Science Education and Research Pune, India}
\email{bivaskhan10@gmail.com}

\author[I. Biswas]{Indranil Biswas}
\address{School of Mathematics, Tata Institute of Fundamental Research, Mumbai, India }

\email{indranil@math.tifr.res.in}

\author[A. Dey]{Arijit Dey}

\address{Department of Mathematics, Indian Institute of Technology-Madras, Chennai, India }

\email{arijitdey@gmail.com}

\author[M. Poddar]{Mainak Poddar}
\address{Department of Mathematics, Indian Institute of Science Education and Research Pune, India}
\email{mainak@iiserpune.ac.in}


\subjclass[2010]{14M25, 14J60, 32L05.}

\keywords{Toric variety, equivariant principal bundle, stability, automorphism, Levi reduction}

\begin{abstract}
Let $X$ be a complex toric variety equipped with the action of an algebraic torus $T$, and let $G$ be a complex linear 
algebraic group.  We classify all $T$-equivariant principal $G$-bundles $\mathcal{E}$ over $X$ and the morphisms 
between them. When $G$ is connected and reductive, we characterize the equivariant automorphism group 
$\text{Aut}_T(\mathcal{E} )$ of $\mathcal{E}$ as the intersection of certain parabolic subgroups of $G$ that arise naturally from the 
$T$-action on $\mathcal{E}$. We then give a criterion for the equivariant reduction of the structure group of 
$\mathcal{E}$ to a Levi subgroup of $G$ in terms of $\text{Aut}_T(\mathcal{E} )$. We use it to prove a principal bundle 
analogue of Kaneyama's theorem on equivariant splitting of torus equivariant vector bundles of small rank over a 
projective space. When $X$ is projective and $G$ is connected and reductive, we show that the notions of stability and equivariant stability are equivalent for any $T$-equivariant principal $G$-bundle over $X$.
\end{abstract}

\maketitle








\tableofcontents

\section{Introduction}

The study of torus equivariant vector bundles on toric varieties (or, in other words, toric vector bundles), was 
initiated by Kaneyama \cite{Kan} and Klyachko \cite{Kly}. This has led to many remarkable applications. For 
instance, Kaneyama proved the existence of an equivariant splitting for any torus equivariant vector bundle over 
$\mathbb{P}^n$ of rank $r < n$ \cite[Corollary 3.5]{Kan2}. This is closely related to the Hartshorne conjecture on 
the splitting of any rank two vector bundle over $\mathbb{P}^n$ for $n \ge 7$; see \cite {IS} for a recent 
development in this regard. Also, Klyachko used the stability of toric vector bundles in his proof of Horn's 
conjecture on eigenvalues of sums of Hermitian matrices \cite{Kly2}. In \cite{Pay}, Payne constructed the 
moduli stack of toric vector bundles with fixed equivariant total Chern class as a quotient of a fine moduli scheme of
framed toric vector bundles by a linear group action. Moreover, in the rank $3$ case, he  proved that 
Murphy's law for singularity holds for the moduli space of toric vector bundles for certain types of toric varieties. 

Several attempts have been made recently to study the more general category of toric principal bundles (cf. \cite{BDP, BDP2, BDP3, KM}). In this article, we prove some foundational results regarding the classification 
of torus equivariant principal bundles over complex toric varieties and their morphisms. We also study
 their equivariant reduction and stability. In particular, we prove a principal bundle analogue of Kaneyama's result mentioned above.

Let $X$ be a complex toric variety with respect to a given action of an algebraic torus $T$, and let $G$ be a complex 
linear algebraic group. A description of $T$-equivariant principal $G$-bundles when $X$ is nonsingular was given 
in the article \cite{BDP}. It is in terms of {\it admissible collections} of homomorphisms
$\rho_{\sigma}\,:\, T \,\longrightarrow\, G$ and group elements $P(\tau,\, \sigma)\,\in\, G$ that satisfy
some extension and cocycle conditions respectively (see Definition \ref{admissible}). Here, $\sigma$ and $\tau$ vary 
over the set of maximal cones in the fan of $X$. Two admissible collections are said to be equivalent if they 
satisfy certain conjugacy relations. The isomorphism classes of $T$-equivariant principal $G$-bundles over $X$ are 
claimed to be in bijective correspondence with the equivalence classes of admissible collections \cite[Theorem 
3.2]{BDP}. However, the definition of the equivalence relation for admissible collections in \cite{BDP} is insufficient 
to prove that equivalent admissible collections produce isomorphic torus equivariant principal bundles.  We note here
that extension conditions are required not only in the definition of admissible collections, but also in defining 
their equivalence (see Definition \ref{equiv_admissible}). This affects the statement and proof of the main 
classification theorems, Theorem 3.2, Theorem 4.5 and Theorem 4.6 of \cite{BDP}. The corrected versions of these 
appear herein as Theorem \ref{classifi}, Theorem \ref{K4} and Theorem \ref{K5} respectively.  We also extend the 
first two theorems to singular toric varieties using the recent trivialization results in \cite{ESP} or \cite{BDP3}. (The statement and proof of other results of \cite{BDP} remain unaltered.)

Central to the approach of \cite{BDP} is the notion of a {\it distinguished section} of a torus equivariant 
principal bundle over an affine toric variety $X_{\sigma}$. We recall that
a distinguished section is a section whose behaviour 
under the torus action is determined by a homomorphism $\rho_{\sigma}\,:\, T
\,\longrightarrow\, G$, see Definition \ref{ds} or 
\cite[Section 2]{BDP} for more details. Here, we give a complete characterisation of the distinguished sections 
through Lemmas \ref{dsclass} and \ref{dsclass2}, leading to the main classification result, namely Theorem 
\ref{classifi}. When we fix an embedding $G \,\hookrightarrow\, {\rm GL}(r, \mathbb{C})$, the information regarding the 
homomorphism $\rho_{\sigma}\,:\, T \,\longrightarrow\, G$ can be transformed into data comprising of certain collection of vectors in 
$\mathbb{Z}^r$, following Kaneyama \cite{Kan}. This gives classification results, namely
Theorems \ref{K4} and \ref{K5}, 
that have a more combinatorial flavour, generalizing the corresponding results of Kaneyama \cite[Theorem 1.3]{Kan2} 
in the case of vector bundles. We note that the data representing a principal bundle, obtained as outlined above, match with the data representing the associated vector bundle corresponding to the aforesaid embedding of $G$ in ${\rm GL}(r, \mathbb{C})$. However, the equivalence relations of the data that describe isomorphisms of these bundles are slightly different in the two categories.  

Next, we study the equivariant automorphism group $\text{Aut}_T(\mathcal{E})$ of a \(T\)-equivariant principal $G$-bundle 
$\mathcal{E}$. We obtain a very precise description of $\text{Aut}_T(\mathcal{E})$ as a subgroup of $G$, see Proposition 
\ref{automorphism}. An analogous description of the set of $T$-equivariant morphisms between two equivariant 
principal $G$-bundles is given by Proposition \ref{morph}. When $G$ is reductive, $\text{Aut}_T(\mathcal{E})$ is the intersection of parabolic subgroups of $G$ corresponding to certain one parameter subgroups that arise naturally 
from the fan of $X$ and the homomorphisms $\rho_{\sigma}$ mentioned above; see Theorem \ref{Aut}. It is easy to observe that $\text{Aut}_T(\mathcal{E})$ always contains the 
center $Z(G)$ of $G$. To illustrate our technique, in Section 8, we compute $\text{Aut}_T(\mathcal{E})$ when $\mathcal{E}$ is the tangent frame bundle 
of a nonsingular projective toric variety $X$, and $X$ has Picard number $\leq 2$. Under these assumptions, 
$\text{Aut}_T(\mathcal{E}) \,=\, Z(G)$ when $X$ is $\mathbb{P}^n$, but $\text{Aut}_T(\mathcal{E})$ can be strictly bigger than $Z(G)$ when Picard number is two; see Section \ref{pictwo}. These results may be recovered by using Klyachko filtrations of the tangent bundle. In Section \ref{extsg}, we use extension of structure group to produce an example of the equivariant automorphism group of a principal ${\rm SL}(3, \C)$-bundle over $\mathbb{P}^2$, whose computation is beyond the scope of vector bundle techniques. In particular, the example shows that the automorphism group can become strictly bigger under an extension of structure group.

Using the classification result, Theorem \ref{classifi}, we derive a criterion, namely Proposition \ref{reduction}, 
for the existence of a $T$-equivariant reduction of the structure group of $\mathcal{E}$ to a subgroup $H\,\subset 
\,G$. Then, in Proposition \ref{levi_reduction}, under the assumption that $G$ is reductive, we show that 
$\mathcal{E}$ admits an equivariant reduction of structure group to a Levi subgroup \(H\) of \(G\) if and only if 
\(Z^0(H)\), the component of the center of $H$ containing the
identity element, is contained in \(\text{Aut}_T(\mathcal{E})\).

We apply the above result on Levi reduction to study the question of equivariant splitting of $\mathcal{E}$ as 
follows. Let \(\phi\,:\,G \,\longrightarrow\, G'\) be an injective homomorphism of reductive linear algebraic groups over 
\(\C\).  Denote by \(\mathcal{E}_{\phi}\) the equivariant principal \(G'\)-bundle obtained from \(\mathcal{E}\) by 
extending its structure group  via \(\phi\). Recall that a principal $G$-bundle is said to be split if it admits a 
reduction of structure group to a torus. In Theorem \ref{extension split}, using Proposition \ref{levi_reduction}, 
we prove that if \(\mathcal{E}_{\phi}\) splits equivariantly, then \(\mathcal{E}\) also splits equivariantly. Now, 
consider $G'$ to be ${\rm GL}(r, \mathbb{C})$, with $r \,< \,n $, and $X$ to be $\mathbb{P}^n$. Then,
using Kaneyama's result 
\cite[Corollary 3.5]{Kan2} and Theorem \ref{extension split}, we prove that \(\mathcal{E}\) splits equivariantly;
see Theorem \ref{equivkan}. We also give an alternative proof of Theorem \ref{equivkan} using results in 
\cite{BGH}, \cite{BP}.
   
It is known that the notions of semistability (respectively, stability) and  equivariant semistability (respectively, equivariant stability) of a $T$-equivariant torsion-free sheaf over a complex projective toric variety  are equivalent (see \cite[Proposition 4.13]{Kool} and \cite[Theorem 2.1]{BDGP}).  In the final section, we study  a similar question for $T$-equivariant principal $G$-bundles over a projective toric 
variety when $G$ is a connected reductive affine algebraic group. We show that the notions of semistability (respectively, stability) and equivariant semistability (respectively, equivariant stability) coincide in this case as well;
see Theorem \ref{thmstable}.
   
It was observed in \cite{BDP} that the isomorphism classes of
holomorphic $T$-equivariant principal bundles on a nonsingular toric
variety are identified with the isomorphism classes of algebraic $T$-equivariant principal bundles,
essentially because a holomorphic 
homomorphism from the algebraic torus to a linear algebraic group is in fact algebraic. In view of this,
without any loss of generality, we will focus here on the algebraic case.

\subsection*{Acknowledgement} 
The fourth author would like to thank Sam Payne for a stimulating discussion. The second-named author is supported by postdoctoral fellowship from the  National Board for Higher Mathemtics--Department of Atomic Energy, India.  The third-named author is supported in part by a J. C. Bose fellowship from the Science and Engineering Research Board, India. The last three authors are supported in part by their respective MATRICS research grants from the Science and Engineering Research Board, India.

\section{Preliminaries}\label{prelim}

In this section, we introduce some definitions and notation to be used throughout the paper. We also recall some results from \cite{BDP} and \cite{ESP} for the convenience of the reader.
  
Let $T\,\cong\, \left(\C^*\right) ^n$ be an algebraic torus. Let $M\,:=\,\text{Hom}(T,\, \C^*) \,\cong\, \Z^n$ be its
character 
lattice and $N\,:=\,\text{Hom}_{\Z}(M, \,\Z)$ the dual lattice. Note that $N$ parametrizes the one parameter subgroups of $T$. The character of
$T$ corresponding to any $u \,\in\, M$ will be denoted 
by $\chi^u$. For any $v \in N$, we denote by $\lambda^v: \mathbb{C}^{\ast} \to T $ the corresponding one parameter subgroup (abbreviated as $1$-psg henceforth) of $T$. We denote by \(\langle \, , \rangle : M \times N \rightarrow \Z \) the natural pairing between \(M\) and \(N\). Let $\Xi$ be a fan in $N \otimes_{\Z} \R$ defining a toric variety $X\,=\,X(\Xi)$ under 
the action of the torus $T$. Let $\Xi(k)$ denote the set of all $k$-dimensional cones of $\Xi$, and let $\sigma(k)$ denote the set of all
$k$-dimensional faces (subcones) of a cone $\sigma$ in $\Xi$. For an element $\alpha$ of $\Xi(1)$, we denote its 
primitive integral generator by $v_{\alpha}$. For a cone \(\sigma \in \Xi\), the dual cone $\sigma^{\vee} $ and the orthogonal space $\sigma^{\perp}$ are defined respectively as follows:
\begin{equation*}
	\sigma^{\vee}=\{u \in M \otimes_{\Z} \R ~|~ \langle u , v \rangle \geq 0 \text{ for all } v \in \sigma \} \text{ and } 	\sigma^{\perp}=\{u \in M \otimes_{\Z} \R ~|~ \langle u , v \rangle = 0 \text{ for all } v \in \sigma \}. 
\end{equation*}
Let \(S_{\sigma}:=\sigma^{\vee} \cap M \) be the affine semigroup (which is finitely generated by Gordon's Lemma \cite[Proposition 1, p. 12]{Ful}) corresponding to a cone $\sigma \in \Delta$. Let \(\C[S_{\sigma}]:=\C[\chi^u ~|~ u \in S_{\sigma}]\) be the associated semigroup algebra and  \(X_{\sigma}:=\text{Spec }\C[S_{\sigma}]\) be the corresponding affine toric variety. The closed points of \(X_{\sigma}\) are in one-to-one correspondence with \(\text{Hom}_{\text{sg}}(S_\sigma, \C)\), the set of semigroup homomorphisms from $S_\sigma$ to $\C$, where $\C$ is considered as a semigroup under multiplication. The affine toric variety \(X_{\sigma}\) has a unique distinguished point, denoted by \(x_{\sigma}\) (see \cite[Section 2.1]{Ful}). This point is defined by a semigroup homomorphism

\begin{equation}\label{distinguished_point}
 S_{\sigma} \rightarrow \C, \,	
u  \mapsto
\begin{cases}
1, \text{  if } u \in \sigma^{\perp},\\
0, \text{  otherwise}.
\end{cases}
\end{equation}
The orbit containing the distinguished point \(x_{\sigma}\) under the action of the torus \(T\) is denoted by \(O_{\sigma}\)  (see \cite[Section 3.1]{Ful}). Note that $O_{\sigma} = \text{Spec }\C[\sigma^{\perp} \cap M] $. When \(\sigma \in \Xi(n)\), we have  \(O_{\sigma} = \{ x_{\sigma} \} \). For the trivial cone $\{0\}$, we have the principal orbit \(O:= O_{\{0\}} =X_{\{0\}}\). Moreover, there is a canonical
identification of $O$ with $T$ as both are defined as $\text{Spec }\C[M] $ (cf. \cite[Proposition 1.6]{Oda} or \cite[p. 17, 52]{Ful}). We denote the closed point of $O$ corresponding to the identity element $1_T$ of $T$ by $x_0$.

For any inclusion of cones $\tau \,\preceq\, \sigma$ in the fan, we have a canonical inclusion of affine toric varieties 
$X_{\tau} \,\subseteq\, X_{\sigma}$. Thus we regard $T$ as an open subset
\begin{equation}\label{eqinc}
T\, \subseteq\, X_{\sigma}
\end{equation}
of every affine toric variety $X_{\sigma}$.  We use 
this convention in the rest of the paper.

Let \(v \in N\) and \(\sigma \in \Xi\). Then 
\[v \in \sigma \text{ if and only if } \lim\limits_{t \rightarrow 0} \lambda^v(t) x_0 \text{ exists in } X_{\sigma}.\] Furthermore, if \(v \) belongs to the  relative interior of the cone $\sigma$, then we have \(\lim\limits_{t \rightarrow 0} \lambda^v(t) x_0 = x_{\sigma} \) (see \cite[Claim 1, p.~38]{Ful}). 

Let \(T_{\sigma}\) denote the stabilizer 
of a point in \(O_{\sigma}\) under the action of $T$.  Then \(T_{\sigma}\) is independent of the choice of the point in \(O_{\sigma}\) as \(T\) is abelian. Note that \(T_{\sigma}\) 
is a subtorus of \(T\) with cocharacter lattice \(N_{\sigma}\), where \(N_{\sigma}\) is
the sublattice of \(N\) spanned by the elements of 
$\sigma \cap N$. We recall that each \(O_{\sigma}\) may be given a group structure using a canonical identification 
with \(T/ T_{\sigma}\); see  \cite[Proposition 1.6]{Oda} or \cite[p.~52]{Ful}. 

For any $\sigma \in \Xi$, there is an exact sequence
\begin{eqnarray}\label{Split1}
0 \,\longrightarrow\, N_{\sigma} \,\longrightarrow\,N\,\longrightarrow\, N/N_{\sigma}\,\longrightarrow\, 0\, .
\end{eqnarray}
Then applying \(Hom_{\Z}( \, \cdot \, ,\, \Z)\), we get the following exact sequence
\begin{equation*}
	0 \longrightarrow \sigma^{\perp} \cap M \longrightarrow M \longrightarrow M / (\sigma^{\perp} \cap M) \longrightarrow 0.
\end{equation*}
Now, applying \(Hom_{\Z}( \, \cdot \, ,\, \C^*)\), we get the exact sequence
\begin{eqnarray}\label{Split2}
1\,\longrightarrow\, T_{\sigma}\,\longrightarrow\, T\,\longrightarrow\, O_{\sigma}\,\longrightarrow\, 1\, .
\end{eqnarray}
Since $\sigma$ is saturated, the quotient
\(N/N_{\sigma}\) is a torsion free, and hence, free \(\Z\)-module (see \cite[p. 29]{Ful}). Thus \eqref{Split1} splits,
and a splitting of it induces a splitting of \eqref{Split2}. For each \(\sigma\), fix once and for all, a splitting of \eqref{Split2},
and let 
\begin{equation}\label{projection}
\pi_{\sigma} \,:\,T\,\longrightarrow \,T_{\sigma}
\end{equation}
be the projection associated to it. 

Let $G$ denote a complex linear algebraic group. Suppose \(\mathcal{E}\) is a $T$-equivariant 
algebraic principal $G$-bundle over $X$ which admits an algebraic trivialization of the underlying principal \(G\)-bundle over 
$X_{\sigma}$. Let \(s\,:\,X_{\sigma}\,\longrightarrow \,\mathcal{E}\) be any algebraic section. 
We encode the $T$-action on \(\mathcal{E}|_{X_{\sigma}}\) in terms of the section $s$ as follows:

\begin{defn}[{\cite[Definition 2.1]{BDP}}]\label{laf}
For any \(x \,\in\, X_{\sigma}\) and \(t \,\in\, T\), define $\rho_s(x, t) \in G$ by
\[ts(x)\, =\, s(tx) \cdot \rho_s(x,\, t) .\]
Since the action of $G$ on each fiber of \(\mathcal{E}\) is free and transitive, it follows that \(\rho_s(x,\, t)\)
is well defined and it is algebraic in the variables $x$ and $t$. We say that
$\rho_s \,:\, X_{\sigma} \times T\,\longrightarrow\, G$ is the local action function associated to $s$.
\end{defn}

\begin{defn}[{\cite[Definition 2.5]{BDP}}]\label{ds}
We say that a section $s$ of $\mathcal{E}|_{X_{\sigma}}$ is distinguished if
\begin{itemize}
\item $ \rho_s(x,\, \cdot)$ is independent of $x$, and
		
\item  $\rho_s(x, \,\cdot)$ factors through the projection $\pi_{\sigma}\,:\, T \,\longrightarrow\, T_{\sigma}$ defined in \eqref{projection}.
\end{itemize}
\end{defn}

If $s$ is a distinguished section, then $\rho_s$ induces a homomorphism from \(T\) to \(G\), which is also denoted 
by $\rho_s$ (see \cite[Lemma 2.3]{BDP}).

\begin{lemma}[{\cite[Corollary 2.3]{ESP}}]\label{lem:cone1} Let \(\mathcal{E}\) be a $T$-equivariant 
	algebraic principal $G$-bundle over $X$. For any cone $\sigma$, the restriction $\mathcal{E}|_{X_{\sigma}}$   is trivial and admits a
distinguished section.
\end{lemma}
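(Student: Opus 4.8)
The plan is to construct a distinguished section of $\mathcal{E}|_{X_{\sigma}}$ directly; the triviality of the underlying bundle will then drop out. Fix $\sigma \in \Xi$ and pick a lattice point $v$ in the relative interior of $\sigma$, with associated one-parameter subgroup $\lambda := \lambda^{v} : \C^{*} \to T$. First I would record two properties of the resulting $\C^{*}$-action on $X_{\sigma}$, read off from its action on the characters $\chi^{u}$, $u \in S_{\sigma}$: (i) $\lim_{t \to 0} \lambda(t) \cdot x$ exists in $X_{\sigma}$ for every $x$, because $\langle u, v \rangle \ge 0$ on $S_{\sigma}$; and (ii) this limit lies in $O_{\sigma}$, and $O_{\sigma}$ is precisely the fixed-point locus of the $\C^{*}$-action, because $v$ interior forces any $u \in S_{\sigma}$ with $\langle u, v\rangle = 0$ to lie in $\sigma^{\perp}$. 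Hence $x \mapsto \lim_{t \to 0} \lambda(t) x$ defines a morphism $p : X_{\sigma} \to O_{\sigma}$, and it is $T$-equivariant because $\lambda(t)$ commutes with the $T$-action on $X_{\sigma}$.

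On the orbit $O_{\sigma}$ the statement is elementary. Since $T$ acts transitively on $O_{\sigma} \cong T/T_{\sigma}$ with stabilizer $T_{\sigma}$, every $T$-equivariant principal $G$-bundle over $O_{\sigma}$ is induced from a homomorphism $\mu : T_{\sigma} \to G$ (pull back along $T \to T/T_{\sigma}$, trivialize the resulting equivariant bundle over the torus $T$, and read off the descent datum, which is the $T_{\sigma}$-action on the fibre over the base point). Using the fixed splitting $T \cong T_{\sigma} \times (T/T_{\sigma})$ behind \eqref{projection}, such a bundle is trivial, and in the induced trivialization its tautological (constant) section $s_{\sigma}$ has local action function $\rho_{s_{\sigma}}(x, t) = \mu(\pi_{\sigma}(t))$, which is independent of $x \in O_{\sigma}$ and factors through $\pi_{\sigma}$. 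Thus $\mathcal{E}|_{O_{\sigma}}$ is trivial and admits a distinguished section $s_{\sigma}$ with $\rho_{s_{\sigma}} = \mu \circ \pi_{\sigma}$.

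It remains to transport this picture to $X_{\sigma}$, and here is where I expect the real work to be. The claim is that $\mathcal{E}|_{X_{\sigma}}$ is $T$-equivariantly isomorphic to the pullback $p^{*}(\mathcal{E}|_{O_{\sigma}})$. Granting this, $\mathcal{E}|_{X_{\sigma}}$ is trivial, being $p^{*}$ of a trivial bundle, and $s := p^{*} s_{\sigma}$ is an algebraic section of $\mathcal{E}|_{X_{\sigma}}$; unravelling Definition \ref{laf} and using the $T$-equivariance of both $p$ and the isomorphism gives $\rho_{s}(x, t) = \rho_{s_{\sigma}}(p(x), t) = \mu(\pi_{\sigma}(t))$ for all $x$, so that $\rho_{s}$ is independent of $x$ and factors through $\pi_{\sigma}$, i.e.\ $s$ is distinguished. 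The isomorphism $\mathcal{E}|_{X_{\sigma}} \cong p^{*}(\mathcal{E}|_{O_{\sigma}})$ is the main obstacle: it is a Bia\l ynicki--Birula--type statement to the effect that a $\C^{*}$-equivariant principal $G$-bundle on an affine $\C^{*}$-variety all of whose points flow into the fixed locus is pulled back from that locus. For nonsingular $X_{\sigma}$ one can instead argue concretely using the product decomposition $X_{\sigma} \cong \mathbb{A}^{k} \times (\C^{*})^{n-k}$, as in \cite{BDP}; in the general case this input is genuinely nontrivial and is exactly what the trivialization theorems of \cite{ESP} and \cite{BDP3} provide, which is why we invoke them here rather than reproving the lemma from scratch.
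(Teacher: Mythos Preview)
The paper does not give a proof of this lemma; it simply records it as a citation to \cite[Corollary~2.3]{ESP}. Your sketch is correct and goes further than the paper by making the geometry explicit: the retraction $p : X_{\sigma} \to O_{\sigma}$ along the one-parameter subgroup $\lambda^{v}$, the elementary trivialization over the closed orbit via the splitting \eqref{projection}, and the pullback of the distinguished section. You have also correctly located the one genuinely nontrivial ingredient, namely the $T$-equivariant isomorphism $\mathcal{E}|_{X_{\sigma}} \cong p^{*}(\mathcal{E}|_{O_{\sigma}})$; this statement is essentially equivalent to the lemma itself and is exactly what \cite{ESP} (and, by a different route, \cite{BDP3}) establish. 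So both your write-up and the paper ultimately rest on the same external input; the difference is only that you unpack how that input is used, whereas the paper simply invokes the cited result.
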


\begin{lemma}[{\cite[Lemma 2.9]{BDP}}]\label{distconj} 
The homomorphisms \(\rho_s\,:\, T \,\longrightarrow\, G\), induced by different distinguished sections \(s\)  of $\mc{E}|_{X_{\sigma}}$, are equal up to conjugation by elements
of $G$. More precisely, if \(s\) and \(s'\) are two distinguished sections such that $s'(x_{\sigma}) \,=\, s(x_{\sigma})\cdot g$ for some \(g \in G\), then $\rho_{s'} \,=\, g^{-1} \rho_s g$. 
\end{lemma}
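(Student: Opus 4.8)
The plan is to transfer the relation $s'=s\cdot g$ between the two distinguished sections into a relation between their associated local action functions, and then use \emph{both} defining properties of a distinguished section — independence of the base point, and factoring through $\pi_\sigma$ — to show that the conjugating element can be taken independent of the point of $X_\sigma$. First, since $G$ acts freely and transitively on each fibre of $\mathcal{E}$, for every $x\in X_\sigma$ there is a unique $g(x)\in G$ with $s'(x)=s(x)\cdot g(x)$; fixing an algebraic trivialization of $\mathcal{E}|_{X_\sigma}$ (which exists by Lemma \ref{lem:cone1}) shows $g\colon X_\sigma\to G$ is algebraic, though only its pointwise existence will be used. Substituting $s'=s\cdot g$ into the defining equation $t\,s'(x)=s'(tx)\cdot\rho_{s'}(x,t)$ of Definition \ref{laf}, using that the $T$-action commutes with the $G$-action, one computes $t\,s'(x)=s(tx)\cdot\bigl(\rho_s(x,t)\,g(x)\bigr)$ and $s'(tx)\cdot\rho_{s'}(x,t)=s(tx)\cdot\bigl(g(tx)\,\rho_{s'}(x,t)\bigr)$; cancelling $s(tx)$ by freeness yields
\[
\rho_{s'}(x,t)\;=\;g(tx)^{-1}\,\rho_s(x,t)\,g(x)\qquad\text{for all }x\in X_\sigma,\ t\in T .
\]

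Because $s$ and $s'$ are distinguished, $\rho_s(x,t)$ and $\rho_{s'}(x,t)$ are independent of $x$; write $\rho_s(t)$, $\rho_{s'}(t)$ for the common values, which are the induced homomorphisms $T\to G$. The main point is to choose the evaluation point wisely: specializing the displayed identity to $x=x_\sigma$ and to $t\in T_\sigma$, the stabilizer of $x_\sigma$, forces $t\,x_\sigma=x_\sigma$, so the two occurrences of $g$ collapse to the single element $g:=g(x_\sigma)$, giving $\rho_{s'}(t)=g^{-1}\rho_s(t)\,g$ for every $t\in T_\sigma$. By hypothesis this $g$ is exactly the element with $s'(x_\sigma)=s(x_\sigma)\cdot g$. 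To pass from $T_\sigma$ to all of $T$, I would invoke the second property of a distinguished section: both $\rho_s$ and $\rho_{s'}$ factor through $\pi_\sigma\colon T\to T_\sigma$, which is the projection attached to a splitting of $1\to T_\sigma\to T\to O_\sigma\to 1$ and hence is idempotent and restricts to the identity on $T_\sigma$. Thus $\rho_s(t)=\rho_s(\pi_\sigma(t))$ and likewise for $\rho_{s'}$, and since $\pi_\sigma(t)\in T_\sigma$ the relation $\rho_{s'}=g^{-1}\rho_s\,g$ holds on all of $T$; the coarser statement "equal up to conjugation" is then immediate, since such a $g$ always exists.

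I do not expect a genuine obstacle: the lemma is soft, and the only thing to watch is that the conjugating element in the intermediate identity a priori varies with the point $x$. The two hypotheses defining a distinguished section are used precisely to eliminate this dependence — independence of $x$ lets us restrict the identity to the stabilizer without losing information, and factoring through $\pi_\sigma$ recovers the full statement on $T$ from its restriction to $T_\sigma$. The one small care point is the idempotency of $\pi_\sigma$ (so that it fixes $T_\sigma$ pointwise), which is what makes the last step go through.
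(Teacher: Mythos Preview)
Your proof is correct. Note that the paper itself does not give a proof of this lemma: it is stated with a citation to \cite[Lemma 2.9]{BDP} and used without further argument. Your approach is the natural one --- the transformation law $\rho_{s'}(x,t)=g(tx)^{-1}\rho_s(x,t)\,g(x)$, specialized to $x=x_\sigma$ with $t\in T_\sigma$, yields the conjugacy on $T_\sigma$, and since both homomorphisms factor through the retraction $\pi_\sigma\colon T\to T_\sigma$ this extends to all of $T$.
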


\begin{defn}[{\cite[Definition 3.1]{BDP}}]\label{admissible}
Let $\Xi^{*}$ denote the set of all maximal cones in the fan $\Xi$.
An admissible collection \(\{\rho_{\sigma},\, P(\tau,\, \sigma)\}\) consists of a collection of homomorphisms
\[\{\rho_{\sigma}\,:\,T \,\longrightarrow\, G ~\,\mid\,~ \sigma \,\in\, \Xi^{*} \}\] and a collection of elements
\(\{P(\tau,\, \sigma)\,\in\, G ~\,\mid\,~ \tau, \sigma \,\in\, \Xi^{*} \}\) satisfying the following conditions:
\begin{enumerate}[$($i$)$]
\item $\rho_{\sigma}$ factors through \(\pi_{\sigma}\,:\,T \,\longrightarrow\, T_{\sigma} \) $($see \eqref{projection}$)$.

\item For every pair \((\tau, \sigma)\) of cones in $\Xi^{*}$, the function $$T\, \longrightarrow G,\, \ \
t\, \longmapsto\, \rho_{\tau}(t)P(\tau, \sigma)\rho_{\sigma}(t)^{-1}$$
extends to a \(G\)-valued function \(\phi_{ \tau \sigma}: X_{\sigma} \cap X_{\tau}\, \rightarrow G\) $($see \eqref{eqinc}$)$.

\item \(P(\sigma, \,\sigma)\,=\,1_G\) $($the identity element of $G)$ for all $\sigma \,\in\, \Xi^{*}$.

\item For every triple \((\tau, \sigma, \delta)\) of cones in $\Xi^{*}$, the cocycle condition
\[P(\tau,\, \sigma) P(\sigma,\, \delta) P(\delta, \,\tau)\,=\,1_G\] holds.
\end{enumerate}
\end{defn}

We now briefly recall from \cite[Section 3]{BDP}, how admissible collections are 
associated to $T$-equivariant principal \(G\)-bundles.

Let $\mc{E}$ be a $T$-equivariant algebraic principal $G$-bundle on $X$.
For any $\sigma \,\in\, 
\Xi^{*}$, set $\mathcal{E}_{\sigma}\,:=\,\mc{E}|_{X_{\sigma}}$, and let \(s_{\sigma}\) be a 
distinguished section of \(\mc{E}_{\sigma}\). Let $\rho_{\sigma}\,:=\,\rho_{s_{\sigma}}\,:\, 
T\, \longrightarrow \,G$ be the corresponding homomorphism. Then, we have a \(T\)-equivariant 
trivialization of $\mathcal{E}_{\sigma}$,
\begin{equation}\label{triv} 
\psi_{\sigma} \,:\, \mathcal{E}_{\sigma}\,\longrightarrow\, X_{\sigma} \times G, \,\,\, ~ s_{\sigma}(x)
\cdot g \,\longmapsto\, (x,\, g), 
\end{equation}
where the action of \(T\) on \(X_{\sigma} \times G\) is given by
\[t(x,\, g)\, =\, (tx,\, \rho_{\sigma}(t)g),\,\,\,\,\, \text{ for all } \,\,\, x \,\in\, X_{\sigma}, \,
g \,\in\, G,\, t \,\in\, T.\]
For $\sigma,\, \tau \,\in\, \Xi^{*}$, the transition function $\phi_{\tau \sigma}\,:\,
X_{\sigma} \cap X_{\tau}\,\longrightarrow\, G$ is defined by the relation 
\begin{equation}\label{transition}
s_{\sigma}(x)\,=\, s_{\tau}(x) \cdot \phi_{\tau \sigma}(x)\,\,\,\,\,\text{ for all }\,\,\, x \,\in\, X_{\sigma} \cap X_{\tau}\, .
\end{equation}
Then, using the \(T\)-equivariance property of $\mathcal{E}$, we have 
\begin{equation}\label{transprop}
\phi_{\tau \sigma}(tx)\,=\,\rho_{\tau}(t) \phi_{\tau \sigma}(x) \rho_{\sigma}(t)^{-1}
\end{equation} 
for all $x \,\in\, X_{\sigma} \cap X_{\tau}$ and $t \,\in\, T$ (see \cite[Section 3]{BDP}). Define
\[P(\tau, \,\sigma)\,:=\,\phi_{\tau \sigma}(x_0) \,.\]
Then the collection \(\{\rho_{\sigma},\, P(\tau, \,\sigma)\}\) gives rise to an admissible collection
as defined in Definition \ref{admissible}. 

\section{Classification of equivariant principal bundles}

In this section, we derive a characterisation of distinguished sections of a $T$-equivariant principal $G$-bundle 
over an affine toric variety \(X_{\sigma}\). This is used in order to obtain a description of isomorphism classes of $T$-equivariant principal 
bundles over an arbitrary toric variety \(X\).

\begin{lemma}\label{dsclass} Suppose $s_{\sigma}$ is a distinguished section
for a $T$-equivariant principal $G$-bundle $\mathcal{E}$ over an affine toric variety $X_{\sigma}$. Let
$ \varphi_{\sigma} \,:\, X_{\sigma}\,\longrightarrow\, G $ be an algebraic function. Then 
$$s'_{\sigma}(x) \,:= \,s_{\sigma}(x) \cdot \varphi_{\sigma}(x)$$ is a distinguished section for $\mathcal{E}$
over $X_{\sigma}$ if and only if $\varphi_{\sigma}$ satisfies the condition
\begin{equation}\label{phiprop}
\varphi_{\sigma}(tx_0)\, =\, \rho_{s_{\sigma}}(t) \, \varphi_{\sigma}(x_0)\,
\varphi_{\sigma}(x_{\sigma})^{-1} \rho_{s_{\sigma}}(t)^{-1} \varphi_{\sigma}(x_{\sigma})
\end{equation}
for all \,$t \,\in\, T$.
\end{lemma}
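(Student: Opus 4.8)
The plan is to make the local action function of the candidate section $s'_\sigma$ completely explicit and then read off the two conditions of Definition \ref{ds} directly from it. Write $\rho := \rho_{s_\sigma}\,:\,T\,\to\,G$ for the homomorphism attached to the distinguished section $s_\sigma$. Since the $T$-action on $\mathcal{E}$ commutes with the $G$-action, $t\cdot s'_\sigma(x) = \big(t\cdot s_\sigma(x)\big)\cdot\varphi_\sigma(x)$; because $s_\sigma$ is distinguished this equals $s_\sigma(tx)\cdot\rho(t)\varphi_\sigma(x) = s'_\sigma(tx)\cdot\varphi_\sigma(tx)^{-1}\rho(t)\varphi_\sigma(x)$. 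Hence, by Definition \ref{laf},
\[
\rho_{s'_\sigma}(x,\,t)\;=\;\varphi_\sigma(tx)^{-1}\,\rho(t)\,\varphi_\sigma(x) \qquad (x\in X_\sigma,\ t\in T),
\]
and $s'_\sigma$ is distinguished if and only if this expression is independent of $x$ and, as a function of $t$, factors through $\pi_\sigma$ of \eqref{projection}.

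For the forward direction I would assume $s'_\sigma$ distinguished and set $\rho' := \rho_{s'_\sigma}$. Independence of $x$ rearranges to $\varphi_\sigma(tx) = \rho(t)\varphi_\sigma(x)\rho'(t)^{-1}$ for all $x,t$; putting $x = x_0$ gives $\varphi_\sigma(tx_0) = \rho(t)\varphi_\sigma(x_0)\rho'(t)^{-1}$. Now $s_\sigma$ and $s'_\sigma$ are both distinguished and $s'_\sigma(x_\sigma) = s_\sigma(x_\sigma)\cdot\varphi_\sigma(x_\sigma)$, so Lemma \ref{distconj} gives $\rho' = \varphi_\sigma(x_\sigma)^{-1}\,\rho\,\varphi_\sigma(x_\sigma)$; substituting this into the previous identity yields exactly \eqref{phiprop}.

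For the converse I would start from \eqref{phiprop}. Define $\rho'(t) := \varphi_\sigma(x_\sigma)^{-1}\rho(t)\varphi_\sigma(x_\sigma)$, which is a homomorphism $T\to G$ factoring through $\pi_\sigma$ because $\rho$ does, and note that \eqref{phiprop} becomes $\varphi_\sigma(tx_0) = \rho(t)\varphi_\sigma(x_0)\rho'(t)^{-1}$ for all $t\in T$. The crucial step is to promote this relation, valid on the open orbit $Tx_0\subseteq X_\sigma$, to $\varphi_\sigma(tx) = \rho(t)\varphi_\sigma(x)\rho'(t)^{-1}$ for all $x\in X_\sigma$: on $Tx_0$ the identity at a point $x = t_0x_0$ follows by applying the $x = x_0$ case to $tt_0$ and to $t_0$ and using that $\rho,\rho'$ are homomorphisms; since $Tx_0$ is dense in $X_\sigma$ and both sides are morphisms $X_\sigma\to G$, they agree everywhere. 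Feeding this back into the formula for $\rho_{s'_\sigma}$ above gives $\rho_{s'_\sigma}(x,t) = \rho'(t)$, which is independent of $x$ and factors through $\pi_\sigma$, so $s'_\sigma$ is distinguished.

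I expect the bootstrapping step in the converse — passing from a relation on the open orbit $Tx_0$ to one on all of $X_\sigma$, which uses density of $Tx_0$ together with separatedness of $G$ — to be the only point that is not routine bookkeeping; the algebra with $\rho_{s'_\sigma}$ and the appeal to Lemma \ref{distconj} are straightforward once the formula for $\rho_{s'_\sigma}$ is in hand.
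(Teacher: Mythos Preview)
Your proof is correct and follows essentially the same route as the paper: derive the explicit formula $\rho_{s'_\sigma}(x,t)=\varphi_\sigma(tx)^{-1}\rho(t)\varphi_\sigma(x)$, use Lemma~\ref{distconj} for the forward direction, and for the converse promote \eqref{phiprop} from the open orbit to all of $X_\sigma$ via the homomorphism property and density. The only cosmetic difference is that you isolate the local action function formula at the outset, whereas the paper quotes it from \cite[Equation~(2.1)]{BDP} midway through the converse.
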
 

\begin{proof}
Take a distinguished section $s_{\sigma}'= \,s_{\sigma} \cdot \varphi_{\sigma}$. From Lemma \ref{distconj} we have 
	\begin{equation}\label{rhoconj} 
	\rho_{s'_{\sigma}}(t) \,=\, \varphi_{\sigma}(x_{\sigma})^{-1} \rho_{s_{\sigma}}(t)\, \varphi_{\sigma}(x_{\sigma}) \,.
	\end{equation}
	\noindent
	Substituting $tx_0$ for $x$ in the definition of $s'_{\sigma}$, we have 
	\begin{equation}\label{sectionrlnp}
	s_{\sigma}'(t x_0)\,=\,{s}_{\sigma}(t x_0) \cdot \varphi_{\sigma}(t x_0)
\end{equation}
for all\, $t \,\in\, T$. In view of Definitions \ref{laf} and \ref{ds} it is concluded that
\begin{equation}\label{basicprop}
s(t x_0) \,=\, t s(x_0) \cdot \rho_s(t)^{-1}
\end{equation}
for all distinguished section $s$.
Then, applying \eqref{basicprop} on both sides of \eqref{sectionrlnp} we get that
\begin{equation*}
t s_{\sigma}'(x_0)\cdot \rho_{s_{\sigma}'}(t)^{-1} \,= \,
t {s}_{\sigma}(x_0)\cdot \rho_{s_{\sigma}}(t)^{-1} \varphi_{\sigma}(t x_0) \,.
	\end{equation*}

Now, by \eqref{rhoconj} and \eqref{sectionrlnp}, 
\begin{equation*}
t s_{\sigma}(x_0)\cdot \varphi_{\sigma}(x_0) \varphi_{\sigma}(x_{\sigma})^{-1}
\rho_{s_{\sigma}}(t)^{-1} \varphi_{\sigma}(x_{\sigma})      
\,=\, t {s}_{\sigma}(x_0)\cdot   \rho_{s_{\sigma}}(t)^{-1} \varphi_{\sigma}(t x_0) \,.
	\end{equation*}
Therefore, it follows that 
	\begin{equation*}
	\varphi_{\sigma}(tx_0)\, =\, \rho_{s_{\sigma}}(t) \, \varphi_{\sigma}(x_0)\,\varphi_{\sigma}(x_{\sigma})^{-1}
\rho_{s_{\sigma}}(t)^{-1} \varphi_{\sigma}(x_{\sigma})
\end{equation*}
for all\, $t \,\in\, T$. This completes one direction of the proof. 

To prove the converse, assume that $\varphi$ satisfies the condition \eqref{phiprop}. It can be shown that
$x_0$ may be replaced by an arbitrary point $x \,\in\, X_{\sigma}$ in \eqref{phiprop}. To prove this, first
consider an arbitrary point $x\,=\, t' x_0 \,\in\, O$, where $t' \,\in\, T$.  Then, by \eqref{phiprop}, we have 
\begin{equation}\label{overT}\begin{split}
	\varphi_{\sigma}(tx)= & \varphi_{\sigma}(tt'x_0) \\
	=& \rho_{s_{\sigma}}(tt') \, \varphi_{\sigma}(x_0)\, \varphi_{\sigma}(x_{\sigma})^{-1} \rho_{s_{\sigma}}(tt')^{-1} \varphi_{\sigma}(x_{\sigma})   \\
	=&  \rho_{s_{\sigma}}(t) \, \rho_{s_{\sigma}}(t') \, \varphi_{\sigma}(x_0)\, \varphi_{\sigma}(x_{\sigma})^{-1} \rho_{s_{\sigma}}(t')^{-1}\rho_{s_{\sigma}}(t)^{-1}  \varphi_{\sigma}(x_{\sigma})   \\
	=&  \rho_{s_{\sigma}}(t) \, \rho_{s_{\sigma}}(t') \, \varphi_{\sigma}(x_0)\, \varphi_{\sigma}(x_{\sigma})^{-1} \rho_{s_{\sigma}}(t')^{-1} \varphi_{\sigma}(x_{\sigma}) \,
	 \varphi_{\sigma}(x_{\sigma})^{-1}  \rho_{s_{\sigma}}(t)^{-1}  \varphi_{\sigma}(x_{\sigma})  \\
	=&  \rho_{s_{\sigma}}(t) \,  \varphi_{\sigma}(t' x_0) \,
	\varphi_{\sigma}(x_{\sigma})^{-1}  \rho_{s_{\sigma}}(t)^{-1}  \varphi_{\sigma}(x_{\sigma}) \\
	 =&  \rho_{s_{\sigma}}(t) \,  \varphi_{\sigma}(x) \,
	 \varphi_{\sigma}(x_{\sigma})^{-1}  \rho_{s_{\sigma}}(t)^{-1}  \varphi_{\sigma}(x_{\sigma}).	 
	\end{split}
	\end{equation}
	As $O$ is Zariski--open dense in $X_{\sigma}$, and the two sides of  \eqref{overT} are algebraic
in $x$, it follows that
\begin{equation}\label{phipropgen}
\varphi_{\sigma}(tx)\, =\, \rho_{s_{\sigma}}(t) \, \varphi_{\sigma}(x)\, \varphi_{\sigma}(x_{\sigma})^{-1}
\rho_{s_{\sigma}}(t)^{-1} \varphi_{\sigma}(x_{\sigma})
\end{equation} 
for all\, $x \,\in\, X_{\sigma}$ and $t\,\in\, T$.

We need to show that the local action function
$ \rho_{s'_{\sigma}} (x,\,t) $ is independent of $x$.
As $s_{\sigma}$ is distinguished, using \cite[Equation (2.1)]{BDP},
\begin{equation*} \rho_{s'_{\sigma}} (x,\, t)\, =\, \varphi_{\sigma}(tx)^{-1} \rho_{s_{\sigma}}(t)\,
\varphi_{\sigma}(x) \,.
\end{equation*}
Then, applying \eqref{phipropgen} we obtain that
\begin{equation*}
\begin{split}
\rho_{s'_{\sigma}} (x,\, t) \,=\,& \varphi_{\sigma}(x_{\sigma})^{-1} \rho_{s_{\sigma}}(t)\, \varphi_{\sigma}(x_{\sigma})\, \varphi_{\sigma}(x)^{-1} \, \rho_{s_{\sigma}}(t)^{-1} 
\rho_{s_{\sigma}}(t)\, \varphi_{\sigma}(x) \\
=\,&  \varphi_{\sigma}(x_{\sigma})^{-1} \rho_{s_{\sigma}}(t)\, \varphi_{\sigma}(x_{\sigma}) \,.
\end{split}
\end{equation*}
This shows that $\rho_{s'_{\sigma}}(x,\,t)$ is independent of $x$. Moreover, it factors through the 
projection $\pi_{\sigma}\,:\, T \,\longrightarrow\, T_{\sigma}$ as $\rho_{s_{\sigma}}(t)$ does so (see \eqref{projection}). 
Consequently, $s'_{\sigma}$ is a distinguished section.
\end{proof}

\begin{lemma}\label{dsclass2} Suppose $s_{\sigma}$ and $s_{\sigma}'$ are two arbitrary distinguished sections for a $T$-equivariant principal $G$-bundle $\mathcal{E}$ over an affine toric variety $X_{\sigma}$. 
Let $ \varphi_{\sigma} : X_{\sigma} \,\longrightarrow\, G $ be the algebraic function defined by
$s_{\sigma}'(x) \,=\, s_{\sigma}(x) \cdot \varphi_{\sigma}(x)$. Then,
\begin{enumerate}[$($a$)$]
\item $\rho_{s'_{\sigma}}(t) \,=\, \varphi_{\sigma}(x_{\sigma})^{-1} \rho_{s_{\sigma}}(t)\, \varphi_{\sigma}(x_{\sigma})$.

\item $\rho_{s_{\sigma}}(t)\varphi_{\sigma}(x_0) \varphi_{\sigma}(x_{\sigma})^{-1}\rho_{s_{\sigma}}(t)^{-1}$ extends to a \(G\)-valued function over \(X_{\sigma} \,.\) $ \text{In particular, }\\
\lim\limits_{t x_0 \rightarrow x_{\sigma}} \rho_{s_{\sigma}}(t)\varphi_{\sigma}(x_0)
\varphi_{\sigma}(x_{\sigma})^{-1}\rho_{s_{\sigma}}(t)^{-1}\,=\,1_G\,. $
\end{enumerate} 
\end{lemma}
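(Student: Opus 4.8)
The plan is to read off part (a) directly from Lemma~\ref{distconj}, and to obtain part (b) from Lemma~\ref{dsclass} combined with the fact that the principal orbit $O$ is Zariski-dense in $X_{\sigma}$ and $\varphi_{\sigma}$ is algebraic, hence continuous.

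For (a), evaluating the defining relation $s'_{\sigma}(x)=s_{\sigma}(x)\cdot\varphi_{\sigma}(x)$ at $x=x_{\sigma}$ gives $s'_{\sigma}(x_{\sigma})=s_{\sigma}(x_{\sigma})\cdot\varphi_{\sigma}(x_{\sigma})$. Since both $s_{\sigma}$ and $s'_{\sigma}$ are distinguished, Lemma~\ref{distconj} applies with $g=\varphi_{\sigma}(x_{\sigma})$ and yields $\rho_{s'_{\sigma}}(t)=\varphi_{\sigma}(x_{\sigma})^{-1}\rho_{s_{\sigma}}(t)\,\varphi_{\sigma}(x_{\sigma})$, which is exactly (a); this is the identity \eqref{rhoconj} already used in the proof of Lemma~\ref{dsclass}.

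For (b), since $s'_{\sigma}=s_{\sigma}\cdot\varphi_{\sigma}$ is a distinguished section, Lemma~\ref{dsclass} tells us that $\varphi_{\sigma}$ satisfies \eqref{phiprop}. Right-multiplying \eqref{phiprop} by $\varphi_{\sigma}(x_{\sigma})^{-1}$ gives, for all $t\in T$,
\[
\rho_{s_{\sigma}}(t)\,\varphi_{\sigma}(x_0)\,\varphi_{\sigma}(x_{\sigma})^{-1}\rho_{s_{\sigma}}(t)^{-1}\;=\;\varphi_{\sigma}(tx_0)\,\varphi_{\sigma}(x_{\sigma})^{-1}.
\]
The key observation is that the right-hand side is the pullback, under the canonical identification $T\xrightarrow{\ \sim\ }O$, $t\mapsto tx_0$, of the restriction to $O$ of the algebraic map $X_{\sigma}\to G$, $x\mapsto\varphi_{\sigma}(x)\,\varphi_{\sigma}(x_{\sigma})^{-1}$. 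Hence the left-hand side, a priori only defined for $t\in T$, is the restriction to $T\cong O$ of this globally defined algebraic $G$-valued function on $X_{\sigma}$, which proves the extension assertion. For the ``in particular'' statement I would take $v\in N$ in the relative interior of $\sigma$, so that $\lim_{s\to 0}\lambda^{v}(s)x_0=x_{\sigma}$ as recalled in Section~\ref{prelim}; by continuity of $\varphi_{\sigma}$ at $x_{\sigma}$, the extended function takes the value $\varphi_{\sigma}(x_{\sigma})\varphi_{\sigma}(x_{\sigma})^{-1}=1_G$ at $x_{\sigma}$, giving the stated limit.

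I do not anticipate a genuine obstacle: both parts are short consequences of Lemmas~\ref{distconj} and~\ref{dsclass}. The only point requiring a little care is the justification that the expression in (b) really extends over all of $X_{\sigma}$ rather than merely being defined on the torus, and this is handled exactly as in the proof of Lemma~\ref{dsclass}, using Zariski-density of $O$ in $X_{\sigma}$ together with the algebraicity of $\varphi_{\sigma}$.
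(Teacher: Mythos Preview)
Your proof is correct and follows the same overall strategy as the paper: part (a) from Lemma~\ref{distconj}, and the extension assertion in (b) from \eqref{phiprop} together with the regularity of $\varphi_{\sigma}$ on $X_{\sigma}$.

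The one place your argument differs is in computing the limit in (b). Having identified the extension explicitly as $x\mapsto \varphi_{\sigma}(x)\,\varphi_{\sigma}(x_{\sigma})^{-1}$, you simply evaluate at $x_{\sigma}$ to get $1_G$. The paper instead re-derives the limit by working with sections: it applies \eqref{basicprop} to $s'_{\sigma}$, passes to the limit $tx_0\to x_{\sigma}$, unwinds using part (a), and then reads off the identity from the freeness of the $G$-action on the fiber. Your route is shorter and avoids this detour; the paper's computation, on the other hand, does not require having already pinned down the explicit form of the extension, only the fact that it exists. Either way the conclusion is immediate, so the difference is stylistic rather than substantive.
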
	

\begin{proof}  Suppose $s_{\sigma}$ and $s_{\sigma}'$ are two arbitrary distinguished sections.
Then part $(a)$ follows immediately from Lemma \ref{distconj} or equation \eqref{rhoconj}.

Note that by Lemma \ref{dsclass} we have
	 \[\varphi_{\sigma}(tx_0) \,=\, \rho_{s_{\sigma}}(t) \, \varphi_{\sigma}(x_0)\,
\varphi_{\sigma}(x_{\sigma})^{-1} \rho_{s_{\sigma}}(t)^{-1} \varphi_{\sigma}(x_{\sigma}) 
\] 
for all \,$t \,\in\, T$.

As $\varphi_{\sigma}$ is regular on $X_{\sigma}$, it follows that \(\rho_{s_{\sigma}}(t) 
\varphi_{\sigma}(x_0)\varphi_{\sigma}(x_{\sigma})^{-1} \rho_{s_{\sigma}}(t)^{-1} \) extends to an algebraic 
function on \(X_{\sigma}\).

Recall equation \eqref{basicprop},
\begin{equation*}
s(t x_0) \,=\, t s(x_0) \cdot \rho_s(t)^{-1} \,. \end{equation*}
Now setting $s \,=\, s_{\sigma}'$, and taking limit as \(t  x_0 \rightarrow x_{\sigma}\), 
we get that
\[s'_{\sigma}(x_{\sigma})\,=\,\lim\limits_{t x_0 \rightarrow x_{\sigma}} t  s'_{\sigma}(x_0) \cdot \rho_{s'_{\sigma}}(t)^{-1}. \]
Then, using the definition of $\varphi_{\sigma}$, and part $(a)$ of this lemma, we see that 
	\[ {s}_{\sigma}(x_{\sigma}) \cdot \varphi_{\sigma}(x_{\sigma})\,=\,\lim\limits_{t  x_0 \rightarrow x_{\sigma}} t  {s}_{\sigma}(x_0) \cdot\varphi_{\sigma}(x_0) \, \varphi_{\sigma}(x_{\sigma})^{-1} \rho_{s_{\sigma}}(t)^{-1} \varphi_{\sigma}(x_{\sigma}).\] 
		\noindent 
	Cancelling \(\varphi_{\sigma}(x_{\sigma})\) on both sides,
	\begin{equation*}\begin{split}
	{s}_{\sigma}(x_{\sigma})= &\lim\limits_{t  x_0 \rightarrow x_{\sigma}} t  {s}_{\sigma}(x_0) \cdot \varphi_{\sigma}(x_0) \,\varphi_{\sigma}(x_{\sigma})^{-1} \rho_{s_{\sigma}}(t)^{-1} \\
	=&\lim\limits_{t  x_0 \rightarrow x_{\sigma}} t  {s}_{\sigma}(x_0) \cdot \rho_{s_{\sigma}}(t)^{-1}\rho_{s_{\sigma}}(t)\,\varphi_{\sigma}(x_0)\, \varphi_{\sigma}(x_{\sigma})^{-1} \rho_{s_{\sigma}}(t)^{-1} \\
	=&\lim\limits_{t  x_0 \rightarrow x_{\sigma}}   {s}_{\sigma}(t  x_0) \cdot \rho_{s_{\sigma}}(t)\,\varphi_{\sigma}(x_0)\, \varphi_{\sigma}(x_{\sigma})^{-1} \rho_{s_{\sigma}}(t)^{-1} \\
	=& \; {s}_{\sigma}(x_{\sigma}) \cdot \lim\limits_{t  x_0 \rightarrow x_{\sigma}}  \rho_{s_{\sigma}}(t)\,\varphi_{\sigma}(x_0) \,\varphi_{\sigma}(x_{\sigma})^{-1} \rho_{s_{\sigma}}(t)^{-1} .
	\end{split}
	\end{equation*}
	Note that to derive the last equality, we have used the fact that
\(\rho_{s_{\sigma}}(t) \, \varphi_{\sigma}(x_0) \,\varphi_{\sigma}(x_{\sigma})^{-1} \rho_{s_{\sigma}}(t)^{-1} \)
extends to an algebraic function on \(X_{\sigma}\). Finally, observe from the last equation
that \[\lim\limits_{t  x_0 \rightarrow x_{\sigma}} \rho_{s_{\sigma}}(t)\,\varphi_{\sigma}(x_0)\,
\varphi_{\sigma}(x_{\sigma})^{-1} \rho_{s_{\sigma}}(t)^{-1} \,=\, 1_G\,.\]
This concludes the proof.
\end{proof}

\begin{prop}\label{iso_admi}
Let \(\mathcal{E}_1\) and \(\mathcal{E}_2\) be two \(T\)-equivariant principal \(G\)-bundles on a toric variety \(X\), which are \(T\)-equivariantly isomorphic. For  \(k=1,2 \), let \(\{s^k_{\sigma}\}_{\sigma \in \Xi^*}\) be a collection of distinguished sections which associates an admissible collection \(\{\rho_{s^k_{\sigma}}, \ P^k(\tau, \, \sigma)\}\)  to \(\mathcal{E}_k\). Then for each $\sigma \in \Xi^{*}$, there exists $\alpha_{\sigma}, \  \beta_{\sigma} \in G$ such that the following conditions hold:
\begin{enumerate}[$($a$)$]
\item $\rho_{s^2_{\sigma}}=\alpha_{\sigma}^{-1} \rho_{s^1_{\sigma}} \alpha_{\sigma}$.

\item \(P^2(\tau, \, \sigma)\,=\,\beta_{\tau}^{-1} P^1(\tau, \, \sigma) \beta_{\sigma}\) for every
pair of cones \((\tau,\, \sigma)\) in $\Xi^{*}$, 

\item $\rho_{s^1_{\sigma}}(t)\beta_{\sigma} \alpha_{\sigma}^{-1}\rho_{s^1_{\sigma}}(t)^{-1}$ extends to a 
\(G\)-valued function over \(X_{\sigma} \,.\) $ \text{In particular, }\\ \lim\limits_{t x_0 \rightarrow 
x_{\sigma}} \rho_{s^1_{\sigma}}(t)\beta_{\sigma} \alpha_{\sigma}^{-1}\rho_{s^1_{\sigma}}(t)^{-1}\,=\,1_G\,. $
\end{enumerate}
\end{prop}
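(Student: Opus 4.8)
The plan is to use a chosen $T$-equivariant isomorphism $F\colon \mathcal{E}_1 \longrightarrow \mathcal{E}_2$ to transport the distinguished sections $s^1_{\sigma}$ of $\mathcal{E}_1$ onto $\mathcal{E}_2$, and then to compare the transported sections with the given $s^2_{\sigma}$ chart by chart. The conceptual core is the observation that, for each $\sigma \in \Xi^{*}$, the pushed-forward section $F\circ s^1_{\sigma}$ of $\mathcal{E}_2|_{X_{\sigma}}$ is again a distinguished section, with the \emph{same} associated homomorphism: $\rho_{F\circ s^1_{\sigma}} = \rho_{s^1_{\sigma}}$. This is because $F$ commutes with both the $T$-action and the $G$-action, so applying $F$ to the defining relation $t\,s^1_{\sigma}(x) = s^1_{\sigma}(tx)\cdot\rho_{s^1_{\sigma}}(x,t)$ of Definition~\ref{laf} gives $t\,(F\circ s^1_{\sigma})(x) = (F\circ s^1_{\sigma})(tx)\cdot\rho_{s^1_{\sigma}}(x,t)$; hence the local action function of $F\circ s^1_{\sigma}$ equals that of $s^1_{\sigma}$, which --- $s^1_{\sigma}$ being distinguished --- is independent of $x$ and factors through $\pi_{\sigma}$.

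Granting this, I would for each $\sigma$ introduce the algebraic map $\varphi_{\sigma}\colon X_{\sigma}\longrightarrow G$ uniquely determined by $s^2_{\sigma}(x) = (F\circ s^1_{\sigma})(x)\cdot\varphi_{\sigma}(x)$ (well defined, and algebraic, because $G$ acts freely and transitively on the fibres of $\mathcal{E}_2$), and set $\alpha_{\sigma}:=\varphi_{\sigma}(x_{\sigma})$ and $\beta_{\sigma}:=\varphi_{\sigma}(x_0)$. Then $s^2_{\sigma}$ and $F\circ s^1_{\sigma}$ are two distinguished sections of $\mathcal{E}_2|_{X_{\sigma}}$ related precisely as in Lemma~\ref{dsclass2}. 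Part (a) of the proposition is then immediate from Lemma~\ref{dsclass2}(a) combined with $\rho_{F\circ s^1_{\sigma}} = \rho_{s^1_{\sigma}}$, and part (c) is exactly Lemma~\ref{dsclass2}(b), noting that $\beta_{\sigma}\alpha_{\sigma}^{-1} = \varphi_{\sigma}(x_0)\,\varphi_{\sigma}(x_{\sigma})^{-1}$.

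For part (b) I would compare transition functions. On $X_{\sigma}\cap X_{\tau}$ write $s^k_{\sigma}(x) = s^k_{\tau}(x)\cdot\phi^k_{\tau\sigma}(x)$ as in \eqref{transition}, so that $P^k(\tau,\sigma)=\phi^k_{\tau\sigma}(x_0)$. Applying the $G$-equivariant map $F$ to the $k=1$ relation shows that $(F\circ s^1_{\sigma})(x)=(F\circ s^1_{\tau})(x)\cdot\phi^1_{\tau\sigma}(x)$, i.e. the sections $F\circ s^1_{\bullet}$ realize $\phi^1$ as transition data for $\mathcal{E}_2$. Substituting this together with $s^2_{\sigma}=(F\circ s^1_{\sigma})\cdot\varphi_{\sigma}$ and $s^2_{\tau}=(F\circ s^1_{\tau})\cdot\varphi_{\tau}$ into the defining relation for $\phi^2_{\tau\sigma}$ and cancelling the free $G$-action yields $\phi^2_{\tau\sigma}(x)=\varphi_{\tau}(x)^{-1}\phi^1_{\tau\sigma}(x)\varphi_{\sigma}(x)$ on $X_{\sigma}\cap X_{\tau}$; evaluating at $x_0\in T\subseteq X_{\sigma}\cap X_{\tau}$ gives $P^2(\tau,\sigma)=\beta_{\tau}^{-1}P^1(\tau,\sigma)\beta_{\sigma}$, which is (b).

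The routine parts are checking that $\varphi_{\sigma}$ is algebraic and the index-bookkeeping in the transition-function step; the only real obstacle is the first step, recognising that $F$ carries distinguished sections to distinguished sections with unchanged homomorphism --- this is precisely what lets Lemma~\ref{dsclass2} do the remaining work. One point to be careful about is that a \emph{single} global $F$ is used on all charts, so the $\varphi_{\sigma}$ for different cones are automatically compatible over the overlaps $X_{\sigma}\cap X_{\tau}$; that compatibility is what produces the two-sided conjugation in (b) rather than a mere conjugacy on each chart.
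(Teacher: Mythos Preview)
Your proposal is correct and follows essentially the same approach as the paper's proof: transport $s^1_{\sigma}$ via the equivariant isomorphism, observe the pushed-forward sections are distinguished with unchanged homomorphism (the paper cites \cite[Lemma 2.4]{BDP} for this, while you argue it directly), define $\varphi_{\sigma}$ by comparison with $s^2_{\sigma}$, set $\alpha_{\sigma}=\varphi_{\sigma}(x_{\sigma})$ and $\beta_{\sigma}=\varphi_{\sigma}(x_0)$, then invoke Lemma~\ref{dsclass2} for (a) and (c) and a transition-function computation for (b).
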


\begin{proof}
Let \(\Phi\,:\,\mathcal{E}_1\,\longrightarrow\, \mathcal{E}_2\) be a \(T\)-equivariant bundle isomorphism.
For $\sigma \in \Xi^*$, consider the section
$$ \widehat{s}^2_{\sigma}\,:= \,\Phi \circ s^1_{\sigma} $$ of $\mathcal{E}_2$ over \(X_{\sigma}\). By
\cite[Lemma 2.4]{BDP}  we have \begin{equation}\label{rhos2hat}
\rho_{\widehat{s}^2_{\sigma}}(x, t) = \rho_{s^1_{\sigma}}(x, t) = \rho_{s^1_{\sigma}}(t)
\end{equation}
for all \,$x \,\in\, X_{\sigma}$.
This shows that \(\{\widehat{s}^2_{\sigma}\}\) defines a collection of distinguished sections of \(\mathcal{E}_2\).
Then using \eqref{transition} and \(G\)-equivariance property of $\Phi$, we have
\[\widehat{s}^2_{\sigma}(x_0)\,=\,\Phi(s^1_{\sigma}(x_0))
\,=\, \Phi(s^1_{\tau}(x_0) \cdot \phi^1_{\tau \sigma}(x_0) )
=\Phi(s^1_{\tau}(x_0)) \cdot \phi^1_{\tau \sigma}(x_0) 
\,=\, \widehat{s}^2_{\tau}(x_0) \cdot P^1(\tau, \,\sigma),\] 
where $\phi^1_{\tau \sigma}$ is the transition functions for $\mathcal{E}_1$. 
Thus the admissible collection associated to $\mathcal{E}_2$ that arises from \(\{\widehat{s}^2_{\sigma}\}\) is same as \(\{\rho_{s^1_{\sigma}}, P^1(\tau, \sigma)\}\). 

Define \(\varphi_{\sigma}\,:\,X_{\sigma}\,\longrightarrow\, G\) such that 
\begin{equation}\label{sectionrln}
s^2_{\sigma}(x)\,=\,\widehat{s}^2_{\sigma}(x) \cdot \varphi_{\sigma}(x)
\end{equation}
for all \,$x \,\in\, X_{\sigma}$.
In addition, define \[ \alpha_{\sigma}\,:=\,\varphi_{\sigma}(x_{\sigma}) \quad {\rm and} \quad \beta_{\sigma}
\,:=\, \varphi_{\sigma}(x_0) \,. \] 
Then $(a)$ and $(c)$ follow immediately from Lemma \ref{dsclass2} by setting $\mathcal{E}
\,=\, \mathcal{E}_2$, $s_{\sigma} \,=\, \widehat{s}_{\sigma}^2 $ and $s'_{\sigma} \,=\, s^2_{\sigma} $.

Let $\phi^2_{\tau \sigma}$ and $\widehat{\phi}^2_{\tau \sigma}$ be the transition functions for $\mathcal{E}_2$ determined by the collections  \(\{s^2_{\sigma}\}\) and $\{\widehat{s}^2_{\sigma}\}$ 
of distinguished sections respectively. In other words, for all \(x \in X_{\sigma} \cap X_{\tau}\) we have
\begin{equation}\label{p2}
s^2_{\sigma}(x)=s^2_{\tau}(x) \cdot \phi^2_{\tau \sigma}(x) \quad \text{ and } \quad \widehat{s}^2_{\sigma}(x)
\,=\,\widehat{s}^2_{\tau}(x) \cdot \widehat{\phi}^2_{\tau \sigma}(x). 
\end{equation}
Using \eqref{sectionrln} and  \eqref{p2} we get that
\begin{align*}
& {s}^2_{\sigma}(x) = {s}^2_{\tau}(x) \cdot  \phi^2_{\tau \sigma}(x)\\
\Rightarrow \; &\widehat{s}^2_{\sigma}(x) \cdot \varphi_{\sigma}(x) \,=\,\widehat{s}^2_{\tau}(x) \cdot \varphi_{\tau}(x) \phi^2_{\tau \sigma}(x)\\
\Rightarrow \; & \widehat{s}^2_{\tau}(x) \cdot \widehat{\phi}^2_{\tau \sigma}(x) \varphi_{\sigma}(x)=\widehat{s}^2_{\tau}(x) \cdot \varphi_{\tau}(x) \phi^2_{\tau \sigma}(x) \\
\Rightarrow \;  & \phi^2_{\tau \sigma}(x)=\varphi_{\tau}(x)^{-1}  \widehat{\phi}^2_{\tau \sigma}(x) \varphi_{\sigma}(x).
\end{align*}
In particular, putting \(x=x_0\) in the last equation,
\[P^2(\tau, \sigma)\,=\,\beta_{\tau}^{-1} P^1(\tau, \sigma) \beta_{\sigma}\, \] where \(\beta_{\sigma}
\,:=\,\varphi_{\sigma}(x_0)\). Thus $(b)$ holds
as well.
\end{proof}

\begin{defn}\label{equiv_admissible}
Two admissible collections \(\{\rho^1_{\sigma}, P^1(\tau, \sigma)\}\) and \(\{\rho^2_{\sigma}, P^2(\tau, 
\sigma)\}\) are said to be equivalent if for each $\sigma \in \Xi^*$, there exists $\alpha_{\sigma},\ 
\beta_{\sigma} \in G$ such that the following conditions hold:

\begin{enumerate}[$($a$)$]
	\item $\rho^2_{\sigma}=\alpha_{\sigma}^{-1}\ \rho^1_{\sigma}\ \alpha_{\sigma}$.
	\item For every pair \((\tau, \sigma)\) of cones in $\Xi^{*}$, \(P^2(\tau, \sigma)=\beta_{\tau}^{-1}\ P^1(\tau, \sigma)\ \beta_{\sigma}\).
	\item $\rho^1_{\sigma}(t)\ \beta_{\sigma} \alpha_{\sigma}^{-1}\ (\rho^1_{\sigma}(t))^{-1}$ extends to a \(G\)-valued function over \(X_{\sigma} \,.\) 
\end{enumerate}

The equivalence class of an admissible collection  \(\{\rho_{\sigma}, P(\tau, \sigma)\}\) will be denoted by \([\{\rho_{\sigma}, P(\tau, \sigma)\}]\).
\end{defn}

\begin{thm}\label{classifi}
	Let \(X\) be a toric variety and \(G\) be a complex linear algebraic group. Then the isomorphism classes of \(T\)-equivariant principal \(G\)-bundles on \(X\) are in one-to-one correspondence with the equivalence classes of admissible collections \([\{\rho_{\sigma}, P(\tau, \sigma)\}]\).
\end{thm}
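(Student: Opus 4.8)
The plan is to establish the bijection by constructing maps in both directions and checking they are mutually inverse. The ``forward'' map sends an isomorphism class of $T$-equivariant principal $G$-bundle $\mathcal{E}$ to the equivalence class $[\{\rho_{\sigma},\, P(\tau,\,\sigma)\}]$ of the admissible collection built from a choice of distinguished sections $\{s_{\sigma}\}_{\sigma \in \Xi^{*}}$, exactly as recalled at the end of Section \ref{prelim}. The first task is to verify this is well defined, i.e.\ independent of all the choices involved. There are two sources of choices: the choice of bundle within its isomorphism class, and the choice of distinguished sections $s_{\sigma}$. Proposition \ref{iso_admi} handles the first --- it says precisely that isomorphic bundles, with \emph{any} chosen distinguished sections, yield equivalent admissible collections (with $\alpha_{\sigma} = \varphi_{\sigma}(x_{\sigma})$, $\beta_{\sigma} = \varphi_{\sigma}(x_0)$). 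The second source of choice is the special case $\mathcal{E}_1 = \mathcal{E}_2$ of the same proposition, so well-definedness of the forward map is immediate once Proposition \ref{iso_admi} is in hand.

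Next I would construct the ``backward'' map. Given an admissible collection $\{\rho_{\sigma},\, P(\tau,\,\sigma)\}$, set $\mathcal{E}_{\sigma} := X_{\sigma} \times G$ with $T$-action $t \cdot (x,\,g) = (tx,\, \rho_{\sigma}(t)g)$, and glue $\mathcal{E}_{\sigma}$ to $\mathcal{E}_{\tau}$ over $X_{\sigma} \cap X_{\tau}$ via the transition function $\phi_{\tau\sigma}$, which is the unique $G$-valued extension to $X_{\sigma} \cap X_{\tau}$ of $t \mapsto \rho_{\tau}(t) P(\tau,\,\sigma) \rho_{\sigma}(t)^{-1}$ guaranteed by condition (ii) of Definition \ref{admissible} (uniqueness holds because $T$ is dense and $G$ is separated). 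Condition (ii) also shows the $\phi_{\tau\sigma}$ intertwine the two $T$-actions, so the glued bundle is $T$-equivariant; conditions (iii) and (iv) give the cocycle identity $\phi_{\tau\sigma}\phi_{\sigma\delta}\phi_{\delta\tau} = 1$ on the dense set $T$, hence everywhere, so the gluing is legitimate. The resulting $\mathcal{E}$ comes with tautological distinguished sections $s_{\sigma}(x) = (x,\,1_G)$ recovering the original data. I must then check this is well defined on equivalence classes: if two admissible collections are related as in Definition \ref{equiv_admissible}, I would use the $\alpha_{\sigma}, \beta_{\sigma}$ to build a $T$-equivariant isomorphism. On $X_{\sigma} \times G$ the natural candidate map is $(x,\,g) \mapsto (x,\, \mu_{\sigma}(x) g)$ where $\mu_{\sigma}$ is the $G$-valued extension to $X_{\sigma}$ of $t \mapsto \rho^1_{\sigma}(t)\, \beta_{\sigma}\alpha_{\sigma}^{-1}\, \rho^1_{\sigma}(t)^{-1}$ supplied by condition (c); $T$-equivariance of this map follows from the conjugation relation (a), and compatibility with the gluings follows from (b) together with a short computation with the defining relations of $\phi^1_{\tau\sigma}$ and $\phi^2_{\tau\sigma}$ --- all identities being checked on the dense torus and then extended.

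Finally I would verify the two maps are mutually inverse. Going admissible collection $\to$ bundle $\to$ admissible collection returns the same collection on the nose, using the tautological sections above. Going bundle $\to$ admissible collection $\to$ bundle returns a bundle $T$-equivariantly isomorphic to $\mathcal{E}$: choosing distinguished sections $s_{\sigma}$ of $\mathcal{E}$ gives $T$-equivariant trivializations $\psi_{\sigma}$ as in \eqref{triv} whose transition functions are exactly the $\phi_{\tau\sigma}$ of the reconstructed bundle, so $\mathcal{E}$ and the reconstruction are glued from the same local data and are therefore isomorphic.

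\textbf{Main obstacle.}
The conceptual heart of the argument --- and the step that, as the introduction emphasizes, was missing or insufficient in \cite{BDP} --- is condition (c) of Definition \ref{equiv_admissible} and the claim that the extension $\mu_{\sigma}$ it provides actually defines a global isomorphism over each $X_{\sigma}$ that is $T$-equivariant, compatible across charts, and (in the round trip) recovers $\mathcal{E}$ up to isomorphism. The subtlety is that $\mu_{\sigma}$ is only specified on the open dense torus $T \subset X_{\sigma}$ via the formula $\rho^1_{\sigma}(t)\beta_{\sigma}\alpha_{\sigma}^{-1}\rho^1_{\sigma}(t)^{-1}$, and one needs the \emph{extension} hypothesis in (c) to guarantee it extends regularly over all of $X_{\sigma}$ --- this is exactly what Lemma \ref{dsclass2}(b) establishes in the bundle setting, and Proposition \ref{iso_admi}(c) records on the combinatorial side. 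I would therefore organize the verification so that every identity among $\mu_{\sigma}$, $\phi^i_{\tau\sigma}$, $\rho^i_{\sigma}$ is first proved on $T$ by direct manipulation and then propagated to $X_{\sigma}$ (or $X_{\sigma} \cap X_{\tau}$) by Zariski density together with separatedness of $G$, precisely as in the proofs of Lemmas \ref{dsclass} and \ref{dsclass2}; the extension conditions are what make each such propagation step legitimate.
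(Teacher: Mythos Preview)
Your plan is the paper's proof: well-definedness of the forward map via Proposition~\ref{iso_admi}, the backward map as the glued bundle $E(\{\rho,P\})$ with transition functions $\phi_{\tau\sigma}$, well-definedness of the backward map by exhibiting an explicit isomorphism built from condition~(c), and the two round-trips checked via the tautological sections $s_\sigma(x)=[(x,1_G)]$ and the trivializations~\eqref{triv}.

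One correction is needed in the isomorphism you propose. The map $(x,g)\mapsto(x,\mu_\sigma(x)g)$ with $\mu_\sigma$ extending $t\mapsto\rho^1_\sigma(t)\,\beta_\sigma\alpha_\sigma^{-1}\,\rho^1_\sigma(t)^{-1}$ is \emph{not} $T$-equivariant in general: writing out the equivariance condition at $x_0$ forces $\alpha_\sigma$ to commute with $\rho^1_\sigma(t)$ for all $t$, which is not assumed. The paper instead sets $r_\sigma(tx_0)=\rho^1_\sigma(t)\,\beta_\sigma\alpha_\sigma^{-1}\,\rho^1_\sigma(t)^{-1}\,\alpha_\sigma$ (your $\mu_\sigma$ right-multiplied by the constant $\alpha_\sigma$, so the extension to $X_\sigma$ is still supplied by condition~(c)) and takes the map $[x,g]\mapsto[x,\,r_\sigma(x)^{-1}g]$ from $E(\{\rho^1,P^1\})$ to $E(\{\rho^2,P^2\})$. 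For this choice the $T$-equivariance identity at $x_0$ becomes $r_\sigma(tx_0)^{-1}\rho^1_\sigma(t)=\rho^2_\sigma(t)\,\beta_\sigma^{-1}$, which is immediate from~(a), and the gluing compatibility $\phi^2_{\tau\sigma}=r_\tau^{-1}\phi^1_{\tau\sigma}\,r_\sigma$ follows from~(b) exactly as you describe. With this adjustment your argument goes through verbatim.
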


\begin{proof}
 Let us define a map \(\bf{A}\) from the set of isomorphism classes of $T$-equivariant principal \(G\)-bundles to the set of equivalence classes of admissible collections given by \[{\bf A}([\mathcal{E}])  = [\{\rho_{\sigma}, P(\tau, \sigma)\}] \] as described in Section \ref{prelim}.
\noindent	 
Note that \(\bf{A}\) is well defined by Proposition \ref{iso_admi}.
	 
	Conversely, given an admissible collection \(\{\rho,P\}=\{\rho_{\sigma}, P(\tau, \sigma)\}\), a $T$-equivariant principal \(G\)-bundle \(E(\{\rho,P\})\) can be constructed as follows (see proof of \cite[Theorem 3.2]{BDP}): Let $\phi_{\tau \sigma}: X_{\sigma} \cap X_{\tau} \rightarrow G$ denote the regular extension of the function $\rho_{\tau}(t)P(\tau, \sigma) \rho_{\sigma}(t)^{-1}$. Since $\{\phi_{\tau \sigma}\}$ satisfies the cocycle conditions, we can construct a $T$-equivariant principal \(G\)-bundle \(E(\{\rho,P\})\) over \(X\) with $\{\phi_{\tau \sigma}\}$ as transition functions: 
	 \begin{equation}\label{p3}
	 E(\{\rho,P\})=\left(\bigsqcup_{\sigma \in \Xi^*}  X_{\sigma} \times G\right) / \sim,
	 \end{equation} 
	 where \((x,g) \sim (y,h)\) for \((x,g) \in X_{\sigma} \times G\) and \((y,h) \in X_{\tau} \times G\) if and only if 
	 \begin{equation}\label{p4}
	 x=y, ~ x \in X_{\sigma} \cap X_{\tau} ~ \text{ and } ~ h=\phi_{\tau \sigma}(x)g.
	 \end{equation}
	 The action of \(G\) on \(X_{\sigma} \times G\) is given by right multiplication and the action of \(T\) on \(X_{\sigma} \times G\) is given by \(t (x, g)=(tx, \rho_{\sigma}(t)g)\).

	 Let $\{\rho',P'\}=\{\rho'_{\sigma}, P'(\tau, \sigma)\}$ be another admissible collection equivalent to \(\{\rho_{\sigma}, P(\tau, \sigma)\}\), and let \(E(\{\rho',P'\})\) be the principal bundle associated to \(\{\rho'_{\sigma}, P'(\tau, \sigma)\}\). To get a well defined map from the set of equivalence classes of admissible collections to the set of isomorphism classes of $T$-equivariant principal \(G\)-bundles, we need to show that \(E(\{\rho,P\})\) and \(E(\{\rho',P'\})\) are \(T\)-equivariantly isomorphic principal \(G\)-bundles. 
	 Let \(\phi'_{\tau \sigma}:X_{\sigma} \cap X_{\tau} \rightarrow G\) be the algebraic extension of \(\rho'_{\tau}(t)P'(\tau, \sigma) \rho'_{\sigma}(t)^{-1}\). Then for each $\sigma \in \Xi^*$, there exists $\alpha_{\sigma}, \ \beta_{\sigma} \in G$ such that the conditions of Definition \ref{equiv_admissible} hold. Define \(r_{\sigma}: O \rightarrow G\) by 
	 \begin{equation}\label{relndefn}
	 r_{\sigma}(t  x_0)=\rho_{\sigma}(t)\beta_{\sigma} \alpha_{\sigma}^{-1}\rho_{\sigma}(t)^{-1} \alpha_{\sigma} \text{ for  all } t \in T.
	 \end{equation}
Then \(r_{\sigma}(x_0)= \beta_{\sigma}\)	and by Definition \ref{equiv_admissible} $(c)$, \(r_{\sigma}\) extends to a regular function \(X_{\sigma} \rightarrow G\). Now, we also have 
	 \begin{equation*}
	 \begin{split}
	 \phi'_{\tau \sigma}(t  x_0)= & \rho'_{\tau }(t)P'(\tau, \sigma) \rho'_{\sigma}(t)^{-1}\\
	 =&\alpha_{\tau}^{-1} \rho_{\tau}(t) \alpha_{\tau} \, \beta_{\tau}^{-1} P(\tau, \sigma) \beta_{\sigma} \, \alpha_{\sigma}^{-1} \rho_{\sigma}(t)^{-1} \alpha_{\sigma}\\
	 =&  \left( \alpha_{\tau}^{-1} \rho_{\tau}(t) \alpha_{\tau} \beta_{\tau}^{-1} \rho_{\tau}(t)^{-1}\right) \left(  \rho_{\tau}(t) P(\tau, \sigma)\rho_{\sigma}(t)^{-1} \right) \left( \rho_{\sigma}(t)  \beta_{\sigma}  \alpha_{\sigma}^{-1} \rho_{\sigma}(t)^{-1} \alpha_{\sigma}\right) \\
	 =&r_{\tau}(t  x_0)^{-1} \phi_{\tau \sigma}(t  x_0) r_{\sigma}(t  x_0).
	 \end{split}
	 \end{equation*}
	 By density of \(O\) in $X_{\sigma} \cap X_{\tau }$, this implies that 
	 \begin{equation}\label{D5}
	 \phi'_{\tau \sigma}(x)=r_{\tau}(x)^{-1} \phi_{\tau \sigma}(x)\, r_{\sigma}(x)\text{ for all } x \in X_{\sigma} \cap X_{\tau }.
	 \end{equation}
Let us define a map 
\begin{equation*}
\begin{split}
\Phi:E(\{\rho,P\}) & \rightarrow E(\{\rho',P'\})\\
[x,g] & \mapsto [x, r_{\sigma}(x)^{-1}g], \ x \in X_{\sigma}, \ g \in G.
\end{split}
\end{equation*}
\noindent	 
	Note that, if \((x,g) \in X_{\sigma} \times G\) is equivalent to \((y,h) \in X_{\tau} \times G\), then \(x=y\) and 
	\begin{equation}\label{MTeq}
	h=\phi_{\tau \sigma}(x)g.
	\end{equation} Now \(\Phi([y,h])=[y, r_{\tau}(y)^{-1}h]\). To show that \(\Phi\) is well defined we need to show that \((x, r_{\sigma}(x)^{-1}g)\) is equivalent to \((y, r_{\tau}(y)^{-1}h)\), i.e. we need to show that 
	 \begin{equation*}
	 r_{\tau}(x)^{-1}h= \phi'_{\tau \sigma}(x)r_{\sigma}(x)^{-1}g \text{ for  all } x \in X_{\tau} \cap X_{\sigma}.
	 \end{equation*}
	 To see this, note that by \eqref{D5},
	 \begin{equation*}
	 \begin{split}
	 \phi'_{\tau \sigma}(x)r_{\sigma}(x)^{-1}g= &r_{\tau}(x)^{-1} \phi_{\tau \sigma}(x) r_{\sigma}(x)r_{\sigma}(x)^{-1}g\\
	 =&r_{\tau}(x)^{-1} \phi_{\tau \sigma}(x)g\\
	 =&r_{\tau}(x)^{-1}h \ (\text{by \eqref{MTeq}}).
	 \end{split}
	 \end{equation*}
	 Thus \(\Phi\) is well defined. Also it is \(G\)-equivariant. To see that $\Phi$ is \(T\)-equivariant, note that 
	  \[\Phi(t \, [x,g])=\Phi([tx, \rho_{\sigma}(t)g])= [tx, r_{\sigma}(tx)^{-1}\rho_{\sigma}(t)g] \text{ for }  x \in X_{\sigma}, \ t \in T .\] 
	  On the other hand, \[t \, \Phi([x,g])=t \, [x, r_{\sigma}(x)^{-1}g]=[tx, \rho'_{\sigma}(t)r_{\sigma}(x)^{-1}g].\] Now, by \eqref{relndefn}, we see that
	 \begin{equation}\label{theq1}
	 \begin{split}
	 r_{\sigma}(tx_0)^{-1}\rho_{\sigma}(t)g=&\alpha_{\sigma}^{-1}\rho_{\sigma}(t) \alpha_{\sigma}\beta_{\sigma} ^{-1}\rho_{\sigma}(t)^{-1}\rho_{\sigma}(t)g\\
	 =&\rho'_{\sigma}(t)\beta_{\sigma}^{-1}g\\
	 =&\rho'_{\sigma}(t)r_{\sigma}(x_0)^{-1}g.
	 \end{split}
	 \end{equation}
 \noindent
 Moreover, for \(t_1,\ t_2 \in T\) we have,
 \begin{equation}\label{theq2}
 \begin{split}
 r_{\sigma}(t_2\, t_1\,x_0)^{-1}\rho_{\sigma}(t_2)g=& \alpha_{\sigma}^{-1}\rho_{\sigma}(t_2 t_1) \alpha_{\sigma}\beta_{\sigma} ^{-1}\rho_{\sigma}(t_2 t_1)^{-1}\rho_{\sigma}(t_2)g  ~ (\text{by } \eqref{relndefn})\\
  =& \alpha_{\sigma}^{-1}\rho_{\sigma}(t_2) \rho_{\sigma}(t_1)  \alpha_{\sigma}\beta_{\sigma} ^{-1}\rho_{\sigma}(t_1)^{-1}g\\
  =&\alpha_{\sigma}^{-1}\rho_{\sigma}(t_2)  \alpha_{\sigma} (\alpha_{\sigma}^{-1} \rho_{\sigma}(t_1)  \alpha_{\sigma}\beta_{\sigma} ^{-1}\rho_{\sigma}(t_1)^{-1})g\\
  =& \rho_{\sigma}'(t_2)  r_{\sigma}( t_1x_0)^{-1} g ~(\text{by }\eqref{relndefn}).
 \end{split}
 \end{equation}
 Since every point of \(O\) is of the form \(t_1 x_0\) for some \(t_1 \in T\), from \eqref{theq2} we have 
 \[r_{\sigma}(t_2 x)^{-1}\rho_{\sigma}(t_2)g = \rho_{\sigma}'(t_2)  r_{\sigma}( x )^{-1} g, \text{ for  all } x \in O, \; \text{and}\; t_2 \in T.\]  
 \noindent
	 Then, by density of \(O\) it follows that 
	 $$ r_{\sigma}(tx)^{-1}\rho_{\sigma}(t)g=\rho'_{\sigma}(t)r_{\sigma}(x)^{-1}g$$ for all \(x \in X_{\sigma},\) and \(t \in T\). This proves that \(\Phi\) is a \(T\)-equivariant morphism, and hence an isomorphism. Thus, we can define a  map \(\bf{B}\) from the set of equivalence classes of admissible collections to the set of isomorphism classes of $T$-equivariant principal \(G\)-bundles given by \[{\bf B} ([\{\rho_{\sigma}, P(\tau, \sigma)\} ]) = [E(\{\rho,P\}) ].\]
	 Now we check that \({\bf A \circ B}=Id\). Let \(\{\rho, P\}\) be an admissible collection and \(E(\{\rho, P\})\) be the associated principal \(G\)-bundle with transition functions $\{\phi_{\tau \sigma}\}$ (as constructed in \eqref{p3}). For $\sigma \in \Xi^{*}$, note that there is a section 
	 \begin{equation}\label{Rmk_sec}
	 s_{\sigma}: X_{\sigma} \rightarrow E(\{\rho, P\}), \text{ given by } s_{\sigma}(x)=[(x, 1_G)], \ x \in X_{\sigma}, 
	 \end{equation}
	 which clearly satisfies \(t s_{\sigma}(x)=s_{\sigma}(tx) \cdot \rho_{\sigma}(t).\) This shows that \(s_{\sigma}\) is indeed a distinguished section. Finally, note that 
	 \begin{equation*}
	 s_{\sigma}(x)=[(x, 1_G)]=[(x, \phi_{\tau \sigma}(x))]=[(x, 1_G)] \cdot \phi_{\tau \sigma}(x)=s_{\tau}(x) \cdot \phi_{\tau \sigma}(x),
	 \end{equation*}
	 which implies that \(\{\rho, P\}\) is an admissible collection associated to \(E(\{\rho, P\})\). Thus \({\bf A \circ B}=Id\).
	 
	 The equality \({\bf B \circ A}=Id\) follows from the last part of the proof of \cite[Theoem 3.2]{BDP}. To see this, let $\mathcal{E}\stackrel{\pi} \rightarrow X$ be an equivariant principal \(G\)-bundle. Consider a representative \(\{\rho, P\}\) of \(\bf{A}([\mathcal{E}])\).  Suppose \(\{\rho,P\}=\{\rho_{\sigma}, P(\tau, \sigma)\}\) is associated to a collection of distinguished sections \(\{s_{\sigma}\}\). Given \(e \in \mathcal{E}\), suppose \(e \in \mathcal{E}_{\sigma}\) for some $\sigma \in \Xi^{\ast}$. Then \(e=s_{\sigma}(\pi(e)) \cdot g_e^{\sigma}\) for unique \(g_e^{\sigma} \in G\). Then there is an isomorphism $$\Phi: \mathcal{E} \rightarrow E(\{\rho, P\}), \text{ given by } \Phi(e)=[(\pi(e), g_e^{\sigma})].$$
	 Thus we get \({\bf B \circ A}=Id\).
\end{proof}

\begin{rmk}{\rm 
Let \(\{\rho^1_{\sigma}, P^1(\tau, \sigma)\}\) and \(\{\rho^2_{\sigma}, P^2(\tau, \sigma)\}\) be two admissible collections which are equivalent in the sense of Definition \ref{equiv_admissible} with $\alpha_{\sigma},\ \beta_{\sigma} \in G$ . Then in view of the proof of Theorem \ref{classifi}, \(\{\rho^1_{\sigma}, P^1(\tau, \sigma)\}\) and  \(\{\rho^2_{\sigma}, P^2(\tau, \sigma)\}\)  give rise to equivariantly isomorphic principal bundles \(E(\{\rho^1, P^1\})\) and \(E(\{\rho^2, P^2\})\) respectively. Then \(\rho^1_{\sigma}\) (respectively, \(\rho^2_{\sigma}\)) is the homomorphism corresponding to the distinguished section \(s_{\sigma}\) (respectively, \(s'_{\sigma}\)) by \eqref{Rmk_sec}. Hence by Proposition \ref{iso_admi} , we see that $\alpha_{\sigma}$, $\beta_{\sigma}$ additionally satisfy the following
\begin{equation*}
\lim\limits_{t x_0 \rightarrow x_{\sigma}} \rho^1_{\sigma}(t)\beta_{\sigma} \alpha_{\sigma}^{-1}\rho^1_{\sigma}(t)^{-1}=1_G.
\end{equation*} 
}
\end{rmk}

\section{Kaneyama type description}\label{Kan description}

In this section, we obtain classification results for equivariant principal \(G\)-bundles having more 
combinatorial flavour, generalizing the classification of equivariant vector bundles given by Kaneyama 
\cite[Theorem 1.3]{Kan2}. Let us fix an embedding of \(G\) in \({\rm GL}(r, \C)\) such that \(K_0\,:=\,D(r, \C) \cap 
G\) is a maximal torus of $G$, where $D(r, \C)$ is the subgroup of diagonal matrices of \({\rm GL}(r, \C)\). We restrict our attention to a smaller class of distinguished sections.

\begin{defn}
Let $\mc{E}$ be a $T$-equivariant principal $G$-bundle on $X$. For any $\sigma \,\in\, \Xi^{*}$, set $\mathcal{E}_{\sigma}\,:=\,\mc{E}|_{X_{\sigma}}$, and let \(s_{\sigma}\) be a 
distinguished section of \(\mc{E}_{\sigma}\). Let $\rho_{\sigma}\,:=\,\rho_{s_{\sigma}}\,:\, 
T\, \longrightarrow \,G$ be the corresponding homomorphism. The distinguished section \(s_{\sigma}\) is called a Kaneyama section if
the image of $\rho_{\sigma}$ lies in \(K_0\).
\end{defn}

Every $\mathcal{E}_{\sigma}$ admits a Kaneyama section. When \(X\) is nonsingular, a proof of this fact can be found in  \cite[Lemma 4.2]{BDP}. The same proof works in the case of singular toric varieties. For any \(t \in T\) and a Kaneyama section \(s_{\sigma}\), the image $\rho_{\sigma}(t)$ is a diagonal matrix which we
denote by 
\begin{equation}\label{K1}
\xi^{\sigma}(t)= \text{diag}(\xi^{\sigma}_1(t), \ldots, \xi^{\sigma}_r(t)).
\end{equation}
Note that \(\xi^{\sigma}_i\) are characters of the torus \(T\). 

\begin{lemma} For any $\gamma \in \Xi(1)$, let $v_{\gamma}$ denote the primitive integral generator of $\gamma$.
	Consider $\sigma, \tau \in \Xi$. Then, there exists a permutation \(\epsilon \in S_r\)  of \(\{1, \ldots, r\}\) such that \[\langle \xi^{\sigma}_i, v_{\gamma} \rangle= \langle \xi^{\tau}_{\epsilon(i)}, v_{\gamma} \rangle \text{ for all } \gamma \in (\sigma \cap \tau)(1).\]
\end{lemma}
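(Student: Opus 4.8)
The plan is to compare the diagonal homomorphisms $\rho_\sigma=\xi^\sigma$ and $\rho_\tau=\xi^\tau$ through the transition function relating the Kaneyama sections $s_\sigma,s_\tau$ on the overlap $X_\sigma\cap X_\tau$, which equals $X_\mu$ for $\mu:=\sigma\cap\tau$ (again a cone of $\Xi$). The leverage is that this transition function is regular on $X_\mu$ and takes values in $G\subseteq \mathrm{GL}(r,\C)$, hence is pointwise invertible. If $\mu=\{0\}$ then $(\sigma\cap\tau)(1)=\emptyset$ and there is nothing to prove, so assume $\mu\ne\{0\}$. Let $\phi_{\tau\sigma}\colon X_\mu\to G$ be the regular map defined by $s_\sigma(x)=s_\tau(x)\cdot\phi_{\tau\sigma}(x)$ as in \eqref{transition} (the construction in Section~\ref{prelim} applies verbatim to any pair of cones), and put $P:=\phi_{\tau\sigma}(x_0)=(p_{ji})\in G$. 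By \eqref{transprop} and \eqref{K1}, the restriction of $\phi_{\tau\sigma}$ to the open orbit $O\cong T$ is the matrix-valued function $t\mapsto\xi^\tau(t)\,P\,\xi^\sigma(t)^{-1}$, whose $(j,i)$-entry is $p_{ji}\,\chi^{\,\xi^\tau_j-\xi^\sigma_i}$.

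First I would record the inequalities forced by regularity. For $\gamma\in\mu(1)$ the primitive generator $v_\gamma$ lies in $\mu$, so $\lim_{t\to0}\lambda^{v_\gamma}(t)x_0$ exists in $X_\mu$ (see Section~\ref{prelim}); hence $t\mapsto\phi_{\tau\sigma}(\lambda^{v_\gamma}(t)x_0)$ extends regularly across $t=0$, and since its $(j,i)$-entry is the Laurent monomial $p_{ji}\,t^{\langle\xi^\tau_j-\xi^\sigma_i,\,v_\gamma\rangle}$ we obtain
\begin{equation*}
p_{ji}\ne0\ \Longrightarrow\ \langle\xi^\tau_j-\xi^\sigma_i,\,v_\gamma\rangle\ge0\qquad\text{for every }\gamma\in\mu(1).
\end{equation*}
Next I would extract a single permutation valid for all rays simultaneously. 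Since $P\in G\subseteq\mathrm{GL}(r,\C)$, the nonvanishing of $\det P=\sum_{\epsilon\in S_r}\mathrm{sgn}(\epsilon)\prod_i p_{\epsilon(i),i}$ gives an $\epsilon\in S_r$ with $p_{\epsilon(i),i}\ne0$ for all $i$. Put $v:=\sum_{\gamma\in\mu(1)}v_\gamma\in N$, which lies in the relative interior of $\mu$, so $\lim_{t\to0}\lambda^v(t)x_0=x_\mu$. Because $\phi_{\tau\sigma}$ is regular and $\mathrm{GL}(r,\C)$-valued on $X_\mu$, the Laurent monomial
\begin{equation*}
t\longmapsto\det\phi_{\tau\sigma}(\lambda^v(t)x_0)\,=\,(\det P)\;t^{\,\langle\sum_j\xi^\tau_j-\sum_i\xi^\sigma_i,\,v\rangle}
\end{equation*}
is regular at $t=0$ and takes a nonzero value there, which forces $\langle\sum_j\xi^\tau_j-\sum_i\xi^\sigma_i,\,v\rangle=0$, i.e. $\sum_i\langle\xi^\tau_{\epsilon(i)}-\xi^\sigma_i,\,v\rangle=0$. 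By the previous step every summand is $\ge0$, so each vanishes: $\langle\xi^\tau_{\epsilon(i)}-\xi^\sigma_i,\,v\rangle=0$ for all $i$.

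Finally, for fixed $i$, expanding $v=\sum_{\gamma\in\mu(1)}v_\gamma$ gives $\sum_{\gamma\in\mu(1)}\langle\xi^\tau_{\epsilon(i)}-\xi^\sigma_i,\,v_\gamma\rangle=0$ with every term $\ge0$, hence $\langle\xi^\sigma_i,\,v_\gamma\rangle=\langle\xi^\tau_{\epsilon(i)},\,v_\gamma\rangle$ for all $\gamma\in\mu(1)=(\sigma\cap\tau)(1)$, which is the assertion. I expect the main obstacle to be precisely the passage from ``for each ray there is some permutation'' to ``one permutation for all rays'': a ray-by-ray use of the regularity inequalities only yields a permutation depending on $\gamma$. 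The device that resolves this is to pin down $\epsilon$ from the nonvanishing of $\det P$, then kill the total weight along the interior cocharacter $\lambda^v$ and propagate the resulting equality to each individual ray using positivity of the ray contributions.
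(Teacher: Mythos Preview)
Your argument is correct. The paper itself does not supply a proof of this lemma: it simply states that the nonsingular case is \cite[Lemma 4.4]{BDP} and that ``the same proof works in general.'' What you have written is, in essence, the standard argument behind that citation, carried out in the present setting: regularity of the entries of $\phi_{\tau\sigma}$ along each one-parameter subgroup $\lambda^{v_\gamma}$ gives the inequalities $\langle\xi^\tau_{j}-\xi^\sigma_i,v_\gamma\rangle\ge 0$ on the support of $P$, invertibility of $P$ supplies a permutation $\epsilon$ with $p_{\epsilon(i),i}\ne0$, and invertibility of $\phi_{\tau\sigma}(x_\mu)$ forces the exponent of the determinant along the interior cocharacter $\lambda^{v}$ to vanish, which upgrades the inequalities to equalities ray by ray. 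The use of $v=\sum_{\gamma\in\mu(1)}v_\gamma$ to obtain a single $\epsilon$ valid for all rays (rather than a $\gamma$-dependent one) is exactly the right device, and your justification that $v$ lies in the relative interior of $\mu$ goes through since $\mu$ is generated by its rays. There is nothing to add.
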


\begin{proof}
	The statement has been proved in the nonsingular case (see \cite[Lemma 4.4]{BDP}\label{K2}). The same proof works in general.
\end{proof}
Write \[\xi^\sigma:=(\xi^{\sigma}_1, \ldots, \xi^{\sigma}_r) \in M^{\oplus r}.\]
Recall that $\rho_{\sigma}(t)$, and therefore $\xi^{\sigma}(t)$ must factor through the projection \(\pi_{\sigma}: T \rightarrow T_{\sigma}\) (see \eqref{projection}). Associated to $\pi_{\sigma}$, there is a map \(\pi_{\sigma}^*: M_{\sigma } \rightarrow M\) from the character group \(M_{\sigma }:=M/(\sigma ^{\perp} \cap M)\) of \(T_{\sigma}\) to
that of \(T\). Then $\xi^{\sigma}$ lies in the image of \(M_{\sigma}^{\oplus r}\) in \(M^{\oplus r}\) under the map induced by
\(\pi_{\sigma}^*\). Note that this image is \(M^{\oplus r}\) if $\sigma$ is of top dimension, as \(T_{\sigma}=T\) in this
case.

\begin{defn}\label{K3}
	 Consider the following abstract data.
\begin{enumerate}
	\item A map \[\xi: \Xi^{*} \rightarrow \left( \pi_{\sigma}^*(M_{\sigma})\right)^{\oplus r},\; \sigma \mapsto \xi^{\sigma}=(\xi^{\sigma}_1, \ldots, \xi^{\sigma}_r) \]
	such that for every pair of maximal cones $\sigma$ and $\tau$ there exists a permutation \(\epsilon \in S_r\) satisfying \[\langle \xi^{\sigma}_i, v_{\gamma} \rangle= \langle \xi^{\tau}_{\epsilon(i)}, v_{\gamma} \rangle \text{ for all } i \text{ and } \gamma \in (\sigma \cap \tau)(1).\]
	
	\item A map \[P: \Xi^{*} \times \Xi^{*} \rightarrow G \subseteq {\rm GL}(r, \C),\; (\tau, \sigma) \mapsto P(\tau, \sigma)\] such that \(P(\tau, \sigma)_{ij} \neq 0\) only if \(\langle \xi^{\tau}_i, v_{\gamma} \rangle \geq \langle \xi^{\sigma}_j, v_{\gamma} \rangle \text{ for all } \gamma \in (\sigma \cap \tau)(1), \) and
	which satisfies the cocycle conditions: \[P(\sigma, \sigma)={\rm Id}_r, ~ P(\tau, \sigma) P(\sigma, \delta) P(\delta , \tau)={\rm Id}_r \text{ for every } \sigma, \tau, \delta \in \Xi^{*}.\]
	
	\item Write \((\xi, P)\) for a collection \( \{\xi^{\sigma}, P(\tau, \sigma)\}\), where $\sigma$, $\tau$ vary over $\Xi^{*}$. Two pairs \((\xi, P)\) and \((\xi', P')\) are said to be equivalent if there exists a permutation \(\eta=\eta(\sigma) \in S_r\), depending on $\sigma$, such that \[(\xi^{\sigma}_1, \ldots, \xi^{\sigma}_r)=(\xi'^{\sigma}_{\eta(1)}, \ldots, \xi'^{\sigma}_{\eta(r)}) \text{ for every } \sigma \in \Xi^{*},\]  
	and if there exists a map \[\beta: \Xi^{*} \rightarrow G, \; \sigma \mapsto \beta_{\sigma}\] such that  \((\beta_{\sigma})_{ij} \neq 0\) only if \(\langle \xi^{\sigma}_i, v_{\gamma} \rangle \geq \langle \xi^{\sigma}_{\eta^{-1}(j)}, v_{\gamma} \rangle \text{ for all } \gamma \in \sigma(1), \) and  \[P'(\tau, \sigma)=\beta_{\tau}^{-1} P(\tau, \sigma) \beta_{\sigma} \text{ for every } \tau, \sigma \in \Xi^{*}.\]
\end{enumerate}
\end{defn}

We have the following combinatorial classification of $T$-equivariant principal \(G\)-bundles over toric varieties. 
\begin{thm}\label{K4}
Let \(X\) be a  toric variety defined by a fan $\Xi$. The set of isomorphism classes of $T$-equivariant principal $G$-bundles on $X$ is in bijective correspondence with the set of data \((1)\) and \((2)\) up to the equivalence \((3)\).
\end{thm}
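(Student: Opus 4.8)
The plan is to leverage Theorem \ref{classifi}, which already establishes a bijection between isomorphism classes of $T$-equivariant principal $G$-bundles and equivalence classes of admissible collections $[\{\rho_\sigma, P(\tau,\sigma)\}]$, and to show that the combinatorial data of Definition \ref{K3} is just a repackaging of this, once we restrict to Kaneyama sections. First I would recall that every $\mathcal{E}_\sigma$ admits a Kaneyama section, so for each maximal cone $\sigma$ we may choose $s_\sigma$ with $\rho_\sigma(t) = \xi^\sigma(t) = \mathrm{diag}(\xi^\sigma_1(t), \ldots, \xi^\sigma_r(t))$, and $P(\tau,\sigma) := \phi_{\tau\sigma}(x_0)$. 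This produces the map $\xi$ of item (1); the condition $\langle \xi^\sigma_i, v_\gamma\rangle = \langle \xi^\tau_{\epsilon(i)}, v_\gamma\rangle$ for $\gamma \in (\sigma \cap \tau)(1)$ is exactly Lemma \ref{K2} (the unnumbered lemma just above Definition \ref{K3}), and the constraint that $\xi^\sigma$ lies in $(\pi_\sigma^*(M_\sigma))^{\oplus r}$ is the factoring-through-$\pi_\sigma$ requirement discussed right before the definition. The cocycle conditions on $P$ are inherited verbatim from Definition \ref{admissible}(iii),(iv).

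The main point to check is that the extension condition Definition \ref{admissible}(ii) — the function $t \mapsto \rho_\tau(t) P(\tau,\sigma) \rho_\sigma(t)^{-1}$ extends regularly over $X_\sigma \cap X_\tau$ — translates precisely into the sign condition in item (2): $P(\tau,\sigma)_{ij} \neq 0$ only if $\langle \xi^\tau_i, v_\gamma\rangle \geq \langle \xi^\sigma_j, v_\gamma\rangle$ for all $\gamma \in (\sigma \cap \tau)(1)$. The $(i,j)$ entry of $\rho_\tau(t) P(\tau,\sigma) \rho_\sigma(t)^{-1}$ is $\xi^\tau_i(t)\, P(\tau,\sigma)_{ij}\, \xi^\sigma_j(t)^{-1} = \chi^{\xi^\tau_i - \xi^\sigma_j}(t)\, P(\tau,\sigma)_{ij}$, and a character $\chi^u$ extends regularly over the affine toric variety $X_{\sigma \cap \tau}$ precisely when $u \in (\sigma \cap \tau)^\vee \cap M$, i.e. $\langle u, v\rangle \geq 0$ for all $v$ in the cone $\sigma \cap \tau$; since this cone is generated by the $v_\gamma$ with $\gamma \in (\sigma \cap \tau)(1)$, the condition is equivalent to $\langle \xi^\tau_i - \xi^\sigma_j, v_\gamma\rangle \geq 0$ for all such $\gamma$. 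I would cite the standard fact (e.g. \cite[p.~4]{Ful}) that the regular functions on $X_{\sigma\cap\tau}$ are spanned by $\chi^u$, $u \in S_{\sigma\cap\tau}$, so a matrix-valued function with entries of this Laurent-monomial-times-constant form is regular iff each nonvanishing entry is such a monomial. This establishes that a Kaneyama admissible collection is the same thing as data (1)+(2).

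Next I would match the equivalence relations. Two Kaneyama admissible collections that produce isomorphic bundles are related by Definition \ref{equiv_admissible} with elements $\alpha_\sigma, \beta_\sigma \in G$. The condition (a), $\rho^2_\sigma = \alpha_\sigma^{-1} \rho^1_\sigma \alpha_\sigma$, says two diagonal matrix-valued homomorphisms are conjugate; conjugating a torus representation permutes the weights, so there is a permutation $\eta = \eta(\sigma) \in S_r$ with $\xi^\sigma_i = \xi'^\sigma_{\eta(i)}$, and $\alpha_\sigma$ may be taken to be a permutation matrix (times an element of the centralizer $K_0$, which may be absorbed). The condition (c), that $\rho^1_\sigma(t)\beta_\sigma\alpha_\sigma^{-1}\rho^1_\sigma(t)^{-1}$ extends regularly over $X_\sigma$, becomes — by the same monomial computation as above, now applied entrywise to the product $\beta_\sigma \alpha_\sigma^{-1}$ over the cone $\sigma$ — the condition $(\beta_\sigma)_{ij} \neq 0$ only if $\langle \xi^\sigma_i, v_\gamma\rangle \geq \langle \xi^\sigma_{\eta^{-1}(j)}, v_\gamma\rangle$ for all $\gamma \in \sigma(1)$, which is item (3). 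And (b), $P^2(\tau,\sigma) = \beta_\tau^{-1} P^1(\tau,\sigma)\beta_\sigma$, is carried over unchanged. Conversely, given data satisfying (3), one reconstructs $\alpha_\sigma$ as the permutation matrix of $\eta(\sigma)$ and verifies the three conditions of Definition \ref{equiv_admissible}. The one subtlety I expect to be the main obstacle is making the passage between "$\alpha_\sigma$ conjugates $\rho_\sigma$ to $\rho'_\sigma$" and "$\alpha_\sigma$ is a permutation matrix" fully rigorous — specifically handling repeated weights, where $\eta$ is only determined up to the stabilizer of the weight multiset and $\alpha_\sigma$ can differ by an element of the (possibly nonabelian, if $G$ is nonabelian but meets $D(r,\C)$ in more than $K_0$ — though here $K_0$ is a maximal torus so it is abelian) centralizer, and checking that absorbing such a factor into $\beta_\sigma$ does not disturb condition (3). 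Granting this bookkeeping, the bijection of Theorem \ref{classifi} restricts to the claimed bijection, proving the theorem.
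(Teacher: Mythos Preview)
Your overall strategy matches the paper's: reduce to Theorem \ref{classifi} and translate each piece of Definition \ref{admissible} into the combinatorial conditions of Definition \ref{K3} by reading off matrix entries once the $\rho_\sigma$ are diagonal. Your treatment of conditions (1) and (2), and of the cocycle conditions, is essentially identical to what the paper does.

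The gap is in your handling of the equivalence (3). You propose to replace $\alpha_\sigma$ by the permutation matrix corresponding to $\eta(\sigma)$, so that the entrywise analysis of $\rho_\sigma(t)\,\beta_\sigma\alpha_\sigma^{-1}\,\rho_\sigma(t)^{-1}$ becomes a condition on the entries of $\beta_\sigma$ itself (rather than of $\beta_\sigma\alpha_\sigma^{-1}$). But $\alpha_\sigma$ must lie in $G$, and the only hypothesis on the embedding $G \hookrightarrow {\rm GL}(r,\C)$ is that $K_0 = G \cap D(r,\C)$ is a maximal torus; there is no reason for $G$ to contain the permutation matrices (take $G$ a torus, or any $G$ whose Weyl group relative to $K_0$ is a proper subgroup of $S_r$). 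The subtlety you flag is real, but it is a \emph{membership-in-$G$} obstruction, not a repeated-weights issue, and absorbing the discrepancy into $\beta_\sigma$ runs into the same problem since $\beta_\sigma$ must also remain in $G$. The paper avoids this entirely by leaving $\alpha_\sigma$ untouched: from $\rho'_\sigma = \alpha_\sigma^{-1}\rho_\sigma\alpha_\sigma$ one has the identity
\[
\rho_\sigma(t)\,\beta_\sigma\,\rho'_\sigma(t)^{-1} \;=\; \bigl(\rho_\sigma(t)\,\beta_\sigma\alpha_\sigma^{-1}\,\rho_\sigma(t)^{-1}\bigr)\,\alpha_\sigma,
\]
so the left-hand side extends to $X_\sigma$ whenever the bracketed factor does. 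Its $(i,j)$-entry is $\chi^{\xi^\sigma_i - \xi'^\sigma_j}(t)\,(\beta_\sigma)_{ij} = \chi^{\xi^\sigma_i - \xi^\sigma_{\eta^{-1}(j)}}(t)\,(\beta_\sigma)_{ij}$, and now your monomial criterion gives condition (3) directly on $(\beta_\sigma)_{ij}$, with no need to control $\alpha_\sigma$. The same identity, read in the other direction, handles the converse without ever requiring $\alpha_\sigma$ to be a permutation matrix.
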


\begin{proof}
	Given a principal \(G\)-bundle $\mathcal{E}$, choose a collection \(\{s_{\sigma}\}\) of Kaneyama sections  to get the data \((\xi, P)\) which arises from an admissible collection \(\{\rho_{\sigma}, P(\tau, \sigma)\}\) by Theorem \ref{classifi} and \eqref{K1}. Due to Lemma \ref{K2}, the data \((\xi, P)\) satisfies the condition \((1)\) of Definition \ref{K3}. To see condition \((2)\) holds, recall from Definition \ref{admissible} \((ii)\) and Theorem \ref{classifi} that  \(\xi^{\tau}(t)P(\tau, \sigma) \xi^{\sigma}(t)^{-1}\) extends to a \(G\)-valued function over \(X_{\sigma} \cap X_{\tau} \) for every pair \((\tau, \sigma)\) of cones in $\Xi^{*}$. Now, observe that 
	\begin{align*}
	\xi^{\tau}(t)P(\tau, \sigma) \xi^{\sigma}(t)^{-1} & =\text{diag}(\xi^{\tau}_1(t), \ldots, \xi^{\tau}_r(t)) \left( P(\tau, \sigma)_{ij}\right) \text{diag}(\xi^{\sigma}_1(t)^{-1}, \ldots, \xi^{\sigma}_r(t)^{-1} ) \\
	&=\left( P(\tau, \sigma)_{ij}\, \xi^{\tau}_i(t) \, \xi^{\sigma}_j(t)^{-1} \right). 
	\end{align*}
	Hence, \(P(\tau, \sigma)_{ij}\, \xi^{\tau}_i(t)\, \xi^{\sigma}_j(t)^{-1} \) are regular functions on \(X_{\sigma} \cap X_{\tau} \). This is  equivalent to the following:
	\begin{center}
		\(P(\tau, \sigma)_{ij} \neq 0\) only if \(\langle \xi^{\tau}_i, v_{\gamma} \rangle \geq \langle \xi^{\sigma}_j, v_{\gamma} \rangle \text{ for all } \gamma \in (\sigma \cap \tau)(1). \)
	\end{center} 
	Also note that \(P(\tau, \sigma)\) satisfies the cocycle conditions.
	
Let \(\{s'_{\sigma}\}\)  be another collection of Kaneyama sections of $\mathcal{E}$  giving an admissible collection \(\{\rho'_{\sigma}, P'(\tau, \sigma)\}\) and corresponding
data \((\xi', P')\).    
Then \(\{\rho'_{\sigma}, P'(\tau, \sigma)\}\) is in the equivalence class of \(\{\rho_{\sigma}, P(\tau, \sigma)\}\).
 Note that \(\{\xi^{\sigma}_1, \ldots, \xi^{\sigma}_r\}\) is the set of characters of the representation \(\rho_{\sigma} : T \rightarrow G \subseteq {\rm GL}(r, \C)\). Thus, it is invariant under conjugation. Hence, from Definition \ref{equiv_admissible} \((a)\) and Theorem \ref{classifi} we get a permutation \(\eta=\eta(\sigma) \in S_r\) such that \[(\xi^{\sigma}_1, \ldots, \xi^{\sigma}_r)=(\xi'^{\sigma}_{\eta(1)}, \ldots, \xi'^{\sigma}_{\eta(r)}) \text{ for every } \sigma \in \Xi^{*}.\] 
Again from Definition \ref{equiv_admissible} \((b)\) and Theorem \ref{classifi}, we can define a map \[\beta: \Xi^{*} \rightarrow G, \ \sigma \mapsto \beta_{\sigma}\]  such that \[P'(\tau, \sigma)=\beta_{\tau}^{-1} P(\tau, \sigma) \beta_{\sigma} \text{ for every } \tau, \sigma \in \Xi^{*}.\] Finally, note that 
$$ \rho_{\sigma}(t) \beta_{\sigma} \rho_{\sigma}'(t)^{-1} = (\rho_{\sigma}(t) \beta_{\sigma}
\alpha_{\sigma}^{-1} \rho_{\sigma}(t)^{-1})\, \alpha_{\sigma} \,.   $$
 Therefore, using Definition \ref{equiv_admissible} \((c)\) and Theorem \ref{classifi} we get that \(\rho_{\sigma}(t) \beta_{\sigma} \rho_{\sigma}'(t)^{-1}\) extends to a \(G\)-valued function over \(X_{\sigma}\) which is equivalent to the following:
  \[(\beta_{\sigma})_{ij} \neq 0\; \text{  only if}\; \langle \xi^{\sigma}_i, v_{\gamma} \rangle \geq \langle \xi^{\sigma}_{\eta^{-1}(j)}, v_{\gamma} \rangle \; \text{ for all } \;\gamma \in (\sigma )(1). \]
  Hence, we conclude that the data $(\xi, P)$ and $(\xi', P')$ are equivalent. 
  
  Conversely, given a set of data \((\xi, P)\) satisfying the conditions of Definition \ref{K3}, we regard $\xi^{\sigma}$ as $\rho_{\sigma}$ and \(P(\tau, \sigma)\) as the transition function at the point \(x_0 \in O\) to construct a principal \(G\)-bundle \(E(\{\rho, P\})\) as in Theorem \ref{classifi}. Equivalence of \((\xi, P)\) and \((\xi', P')\) imply the equivalence of 
  $\{\rho_{\sigma}, P(\tau, \sigma)\}$ and  $\{\rho_{\sigma}', P'(\tau, \sigma)\}$.
    Then the theorem follows from Theorem \ref{classifi}.
\end{proof}

Assume further that \(X\) is nonsingular and complete. Given $\xi$ as above, for every $\sigma \in \Xi^{*}\,=\, \Xi(n)$,
where \(n=\text{dim}(X)\), we have a map \[m_{\sigma}\,:\, \sigma(1)\, \longrightarrow\, \Z^r 
\]
such that $m_{\sigma}({\gamma})_i\,=\,\langle \xi^{\sigma}_i, v_{\gamma} \rangle$ for all
$\gamma \,\in\, \sigma(1)$.
Moreover, as $\sigma$ is nonsingular, every map \(m_{\sigma}\,:\, \sigma(1) \,\longrightarrow\, \Z^r\) corresponds to a $\xi^{\sigma}$ by the nondegeneracy of the pairing \(\langle ~ , ~ \rangle\). A strategy of
Kaneyama \cite{Kan, Kan2} is to reformulate the equivalence \((3)\), Definition \ref{K3}, in terms of \(m_{\sigma}\)'s. 

\begin{defn}\label{K6}
Consider the following abstract data.
\begin{enumerate}
	\item[$(1')$]  A map \[m \,:\, \Xi(1) \,\longrightarrow\, \Z^r,\,\ \gamma \,\longmapsto\,
m(\gamma)\,=\,(m(\gamma)_1, \ldots, m(\gamma)_r) \] such that for every \(\sigma \,\in\, \Xi(n)\) there is another
map \(m_{\sigma} \,:\, \sigma(1) \,\longrightarrow\, \Z^r\) so that there is a permutation $\epsilon \in S_r$ satisfying 
	\begin{align*}
	m_{\sigma}(\gamma)=(m_{\sigma}(\gamma)_1, \ldots, m_{\sigma}(\gamma)_r)=(m(\gamma)_{\epsilon(1)}, \ldots, m(\gamma)_{\epsilon(r)})
	\end{align*}
	for all $\gamma \in \sigma(1)$.

	\item[$(2')$] A map \[P\,\,:\,\, \Xi(n) \times \Xi(n)\,\longrightarrow\, G \,
\subseteq\, {\rm GL}(r, \C),\,\ (\tau,\, \sigma)\,\longmapsto\, P(\tau,\, \sigma)\] such that \(P(\tau, \sigma)_{ij} \neq 0\) only if \(m_{\tau}({\gamma})_i  \geq m_{\sigma}({\gamma})_j \text{ for all } \gamma \in (\sigma \cap \tau)(1), \) and which satisfies the following cocycle conditions: 
	 \[P(\sigma, \sigma)={\rm Id}_r, ~ P(\tau, \sigma) P(\sigma, \delta) P(\delta , \tau)={\rm Id}_r \text{ for every } \sigma, \tau, \delta \in \Xi(n).\]
	
	\item[$(3')$]  Two pairs \((m,\, P)\) and \((m', \,P')\) are said to be equivalent if there exists a
permutation \(\eta\,=\,\eta(\sigma) \,\in\, S_r\), that depends on $\sigma$, such that \[(m_{\sigma}({\gamma})_1, \ldots, m_{\sigma}({\gamma})_r)=(m'_{\sigma}({\gamma})_{\eta(1)}, \ldots, m'_{\sigma}({\gamma})_{\eta(r)}) \text{ for every } \sigma \in \Xi(n),\]  
	and if there exists a map \[\beta: \Xi(n) \rightarrow G,\, \sigma \mapsto \beta_{\sigma}\] such that  \((\beta_{\sigma})_{ij} \neq 0\) only if \( m_{\sigma}({\gamma})_i  \geq  m_{\sigma}({\gamma})_{\eta^{-1}(j)} \text{ for all } \gamma \in \sigma(1), \) and  \[P'(\tau, \sigma)=\beta_{\tau}^{-1} P(\tau, \sigma) \beta_{\sigma} \text{ for every } \tau, \sigma \in \Xi(n).\]
\end{enumerate}
\end{defn} 

For a nonsingular and complete toric variety, Theorem \ref{K4} can be reformulated as follows.
\begin{thm}\label{K5}
	Let \(X\) be a nonsingular complete toric variety defined by a fan $\Xi$ of dimension $n$. The set of isomorphism classes of $T$-equivariant principal $G$-bundles on $X$ is in bijective correspondence with the set of data \((1')\) and \((2')\) up to the equivalence \((3')\).
\end{thm}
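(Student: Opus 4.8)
The strategy is to deduce Theorem \ref{K5} from Theorem \ref{K4} by exhibiting an explicit dictionary that turns the ``character'' data $\xi^{\sigma}$ of Definition \ref{K3} into the ``weight'' data $m_{\sigma}$ of Definition \ref{K6}. Once this dictionary is in place the three defining conditions and the two equivalence relations match almost verbatim; the only genuinely new ingredient is a patching argument that produces the globally defined map $m$ on $\Xi(1)$ out of the per-maximal-cone maps $m_{\sigma}$.

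I would first record the geometric input provided by the standing hypotheses. Since $X$ is complete, $\Xi^{*}=\Xi(n)$, and since $X$ is nonsingular each $\sigma\in\Xi(n)$ is a nonsingular $n$-dimensional cone, so $\{v_{\gamma}\mid\gamma\in\sigma(1)\}$ is a $\Z$-basis of $N$; hence $u\longmapsto(\langle u,v_{\gamma}\rangle)_{\gamma\in\sigma(1)}$ is an isomorphism $M\xrightarrow{\,\sim\,}\Z^{\sigma(1)}$. Moreover $T_{\sigma}=T$ for top-dimensional $\sigma$, so $\pi_{\sigma}^{*}(M_{\sigma})=M$. Consequently the rule $\xi^{\sigma}=(\xi^{\sigma}_{1},\ldots,\xi^{\sigma}_{r})\longmapsto m_{\sigma}$, where $m_{\sigma}(\gamma)_{i}:=\langle\xi^{\sigma}_{i},v_{\gamma}\rangle$, is a bijection between the maps $\xi$ of Definition \ref{K3}(1) (before imposing the compatibility requirement) and the families $(m_{\sigma})_{\sigma\in\Xi(n)}$ of maps $\sigma(1)\to\Z^{r}$; the data $P$ is literally the same in both pictures since $\Xi^{*}=\Xi(n)$.

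Next I would check that the conditions correspond under this dictionary. Using $\langle\xi^{\sigma}_{i},v_{\gamma}\rangle=m_{\sigma}(\gamma)_{i}$ and Lemma \ref{K2}, the compatibility in Definition \ref{K3}(1) becomes: for every pair $\sigma,\tau$ there is $\epsilon\in S_{r}$ with $m_{\sigma}(\gamma)_{i}=m_{\tau}(\gamma)_{\epsilon(i)}$ for all $i$ and all $\gamma\in(\sigma\cap\tau)(1)$. I claim this is equivalent to the existence of a map $m:\Xi(1)\to\Z^{r}$ for which each $m_{\sigma}$ is a global permutation of $m|_{\sigma(1)}$, i.e. to condition $(1')$: one direction is immediate (compose the two permutations), and for the other one fixes a maximal cone $\sigma_{0}$, sets $m:=m_{\sigma_{0}}$ on $\sigma_{0}(1)$, and propagates the definition of $m$ to the remaining rays along a chain of maximal cones meeting consecutively in walls — such chains exist because the fan of a complete variety is connected through walls — the pairwise compatibility (a single permutation valid on all common rays) being precisely what makes each propagation step compatible with the previous assignments. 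For condition \ref{K3}(2): exactly as in the proof of Theorem \ref{K4}, $\xi^{\tau}(t)P(\tau,\sigma)\xi^{\sigma}(t)^{-1}=\big(P(\tau,\sigma)_{ij}\,\xi^{\tau}_{i}(t)\,\xi^{\sigma}_{j}(t)^{-1}\big)$ extends to a regular $G$-valued function on $X_{\sigma}\cap X_{\tau}$ if and only if $P(\tau,\sigma)_{ij}\neq 0$ forces $\langle\xi^{\tau}_{i},v_{\gamma}\rangle\geq\langle\xi^{\sigma}_{j},v_{\gamma}\rangle$ for all $\gamma\in(\sigma\cap\tau)(1)$, which is $(2')$ under the substitution $\langle\xi^{\bullet}_{i},v_{\gamma}\rangle=m_{\bullet}(\gamma)_{i}$, and the cocycle relations are identical. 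Finally, condition \ref{K3}(3) becomes $(3')$ word for word: the permutation $\eta(\sigma)$ relating $\xi^{\sigma}$ to $\xi'^{\sigma}$ up to relabeling is the permutation relating $m_{\sigma}$ to $m'_{\sigma}$; the support condition ``$(\beta_{\sigma})_{ij}\neq 0$ only if $\langle\xi^{\sigma}_{i},v_{\gamma}\rangle\geq\langle\xi^{\sigma}_{\eta^{-1}(j)},v_{\gamma}\rangle$ for $\gamma\in\sigma(1)$'' transcribes to $m_{\sigma}(\gamma)_{i}\geq m_{\sigma}(\gamma)_{\eta^{-1}(j)}$; and $P'(\tau,\sigma)=\beta_{\tau}^{-1}P(\tau,\sigma)\beta_{\sigma}$ is unchanged.

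Putting these together, the dictionary (refined by the patching step so as to supply the map $m$) sends data $(1),(2)$ of Definition \ref{K3} to data $(1'),(2')$ of Definition \ref{K6}, carries the equivalence $(3)$ to the equivalence $(3')$, and is invertible; hence it induces a bijection between the set of classes of data $(1),(2)$ modulo $(3)$ and the set of classes of data $(1'),(2')$ modulo $(3')$. Composing with the bijection of Theorem \ref{K4} yields Theorem \ref{K5}. The step I expect to be the main obstacle is precisely this patching: one must verify that propagating $m$ around the fan is well defined, i.e. free of permutation monodromy around loops of maximal cones, and this is where the precise shape of the compatibility in Definition \ref{K3}(1) — one permutation per pair of cones, valid simultaneously on all common rays — together with the connectedness of a complete fan, has to be used; everything else is a mechanical transcription once the identification $M\cong\Z^{\sigma(1)}$ is available.
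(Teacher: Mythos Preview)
The paper offers no separate proof of Theorem~\ref{K5}; it is presented simply as a repackaging of Theorem~\ref{K4} via the dictionary $m_{\sigma}(\gamma)_{i}=\langle\xi^{\sigma}_{i},v_{\gamma}\rangle$ set up in the paragraph preceding Definition~\ref{K6}. Your proposal follows exactly this route and is more explicit than the paper about why nonsingularity (bijectivity of $M\to\Z^{\sigma(1)}$) and completeness ($\Xi^{*}=\Xi(n)$) are needed; the transcription of $(2)\leftrightarrow(2')$ and $(3)\leftrightarrow(3')$ is correct.

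You rightly isolate the construction of a global $m$ from the per-cone data $\{m_{\sigma}\}$ as the only step that is not mechanical, but your propagation-along-walls sketch does not dispose of the monodromy you anticipate, and the obstruction is real at the level of raw data. On $\mathbb{P}^{2}$ with $r=2$ one can choose $m_{\sigma_{1}}\equiv(0,1)$, $m_{\sigma_{2}}\equiv(1,0)$, and $m_{\sigma_{3}}$ taking the values $(0,1)$ and $(1,0)$ on its two rays, so that every pair of cones is related by the transposition $(12)$ on their common ray; this satisfies Definition~\ref{K3}(1), yet no system $\{\epsilon(\sigma)\}\subset S_{2}$ can have all three pairwise differences equal to $(12)$, so no global $m$ with a single permutation per cone exists. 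The paper does not address this point either. What makes the two classifications agree is that the bijection is between \emph{equivalence classes}: one must argue that any datum $(\xi,P)$ is equivalent under $(3)$ to one whose $\{m_{\sigma}\}$ do admit a global $m$ (for instance by absorbing suitable permutation matrices into the $\beta_{\sigma}$ when these lie in $G$), or else---closer to Kaneyama's intent---treat the collection $\{m_{\sigma}\}$ as the genuine content of $(1')$, with $m$ a bookkeeping device for the ray-wise multisets. Either reading closes the gap, but a sentence making the choice explicit is needed.
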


\begin{ex}{\rm  (cf. \cite[Remark 1.4]{Kan2})
\noindent
Let \(D_{\gamma}\) be the invariant prime divisor corresponding to the ray $\gamma \in \Xi(1)$. Put $a_{\gamma}=m(\gamma)$ where \(m\) is defined by Definition \ref{K6} \((1')\) in the case \(r=1\). Let \(G=\C^*\) and  \(P(\tau, \sigma)=1\) for every $\tau$ and $\sigma$ in $\Xi(n)$. Then the associated principal bundle \(E(\{m, P\})\) is the frame bundle associated to the line bundle \(\mathcal{O}_X\left( -\sum_{\gamma \in \Xi(1)} a_{\gamma} D_{\gamma}\right) \). 
}	
\end{ex}

\section{Morphisms of equivariant principal bundles}

In this section, we give a combinatorial description of equivariant morphisms between equivariant principal bundles. Let $X$ be a complex toric variety. Let us recall that a morphism between two principal \(G\)-bundles $\mathcal{E}$ and $\mathcal{E'}$ on \(X\) is a morphism of varieties $\Phi : \mathcal{E} \rightarrow \mathcal{E'}$ such that \(\Phi\) commutes with the bundle projections and \(\Phi\) is  \(G\)-equivariant. An equivariant morphism between two \(T\)-equivariant principal \(G\)-bundles $\mathcal{E}$ and $\mathcal{E'}$ on a toric variety \(X\) is a morphism $\Phi : \mathcal{E} \rightarrow \mathcal{E'}$ of principal \(G\)-bundles which is also \(T\)-equivariant. We note here that any morphism of principal bundles is in fact an isomorphism. 
\begin{prop}\label{automorphism}
	Let $\mathcal{E}$ be a \(T\)-equivariant principal \(G\)-bundle over \(X\). Let \(e \in \mathcal{E}_{x_0} \), the fiber of $\mathcal{E}$ at \(x_0\), and for each $\sigma \in \Xi^*$, take a distinguished section \(s_{\sigma}: X_{\sigma} \rightarrow \mathcal{E} \) such that \(s_{\sigma}(x_0)=e\). Then \(\text{Aut}_T(\mathcal{E})\), the group of \(T\)-equivariant automorphisms of $\mathcal{E}$, is given by
	\begin{align*}
	\text{Aut}_T(\mathcal{E}) &=\{g \in G ~|~ \text{for all } \sigma \in \Xi^*, ~ \rho_{\sigma}(t) g \rho_{\sigma}(t)^{-1} \text{ extends to $X_{\sigma}$ } \}
	\end{align*}
\end{prop}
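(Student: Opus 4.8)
The plan is to realize $\text{Aut}_T(\mathcal{E})$ as a subgroup of $G$ through the value of an automorphism at the single point $e$, and then to read off the membership condition by trivializing over each affine chart $X_\sigma$, where the principal orbit $O\cong T$ is Zariski dense. The first thing I would record is a consequence of the standing hypothesis $s_\sigma(x_0)=e$ for all $\sigma\in\Xi^*$: if $\phi_{\tau\sigma}$ is the transition function determined by $s_\sigma(x)=s_\tau(x)\cdot\phi_{\tau\sigma}(x)$ on $X_\sigma\cap X_\tau$, then evaluating at $x_0$ gives $\phi_{\tau\sigma}(x_0)=1_G$, i.e.\ $P(\tau,\sigma)=1_G$ for every pair of maximal cones, so that $\phi_{\tau\sigma}$ is simply the regular extension of $t\mapsto\rho_\tau(t)\rho_\sigma(t)^{-1}$. (Such a common choice of distinguished sections always exists: starting from any distinguished $s_\sigma$ and choosing $g_\sigma\in G$ with $s_\sigma(x_0)\cdot g_\sigma=e$, the constant shift $s_\sigma\cdot g_\sigma$ is again distinguished by Lemma~\ref{dsclass}, since a constant map trivially satisfies~\eqref{phiprop}.) Since any morphism of principal $G$-bundles over $X$ covers $\mathrm{id}_X$ and is $G$-equivariant, a $\Phi\in\text{Aut}_T(\mathcal{E})$ preserves the fiber $\mathcal{E}_{x_0}$, and there is a unique $g(\Phi)\in G$ with $\Phi(e)=e\cdot g(\Phi)$; the assignment $\Phi\mapsto g(\Phi)$ is visibly a group homomorphism. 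It is injective: if $g(\Phi)=1_G$ then, using $G$-equivariance together with~\eqref{basicprop}, one gets $\Phi(s_\sigma(tx_0))=t\,e\cdot\rho_\sigma(t)^{-1}=s_\sigma(tx_0)$ for all $t$, so $\Phi$ is the identity on the preimage of $O$, hence on all of $\mathcal{E}$ by density.

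To pin down the image, fix $\Phi\in\text{Aut}_T(\mathcal{E})$ and set $g:=g(\Phi)$. Trivializing $\mathcal{E}|_{X_\sigma}$ via $s_\sigma$, write $\Phi(s_\sigma(x))=s_\sigma(x)\cdot\Theta_\sigma(x)$ for a regular map $\Theta_\sigma:X_\sigma\to G$ with $\Theta_\sigma(x_0)=g$. Computing $\Phi(t\cdot s_\sigma(x))$ in two ways — via $t\,s_\sigma(x)=s_\sigma(tx)\cdot\rho_\sigma(t)$, and via $T$-equivariance of $\Phi$ — yields the identity $\Theta_\sigma(tx)=\rho_\sigma(t)\,\Theta_\sigma(x)\,\rho_\sigma(t)^{-1}$ on $X_\sigma$; putting $x=x_0$ shows that $t\mapsto\rho_\sigma(t)\,g\,\rho_\sigma(t)^{-1}$, a priori only defined on $O$, coincides there with the regular map $\Theta_\sigma$ and hence extends over $X_\sigma$. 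As $\sigma$ is arbitrary, $g$ lies in the displayed set.

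Conversely, given $g\in G$ such that $t\mapsto\rho_\sigma(t)\,g\,\rho_\sigma(t)^{-1}$ extends to a regular map $\Theta_\sigma:X_\sigma\to G$ for every $\sigma$, I would define $\Phi$ chart-wise by $\Phi(s_\sigma(x)\cdot h)=s_\sigma(x)\cdot\Theta_\sigma(x)\,h$ and check four things: (i) the local definitions agree on $X_\sigma\cap X_\tau$, which amounts to $\phi_{\tau\sigma}(x)\,\Theta_\sigma(x)=\Theta_\tau(x)\,\phi_{\tau\sigma}(x)$, an identity that on the dense set $O$ reduces, after substituting $\phi_{\tau\sigma}(tx_0)=\rho_\tau(t)\rho_\sigma(t)^{-1}$, $\Theta_\sigma(tx_0)=\rho_\sigma(t)g\rho_\sigma(t)^{-1}$ and $\Theta_\tau(tx_0)=\rho_\tau(t)g\rho_\tau(t)^{-1}$, to the statement that both sides equal $\rho_\tau(t)\,g\,\rho_\sigma(t)^{-1}$; (ii) $\Phi$ is a morphism of principal $G$-bundles; (iii) $\Phi$ is $T$-equivariant, which again reduces to $\Theta_\sigma(tx)=\rho_\sigma(t)\Theta_\sigma(x)\rho_\sigma(t)^{-1}$ on $O$, extended by density; and (iv) $\Phi(e)=e\cdot g$. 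Since every morphism of principal bundles is an isomorphism, $\Phi\in\text{Aut}_T(\mathcal{E})$ with $g(\Phi)=g$, which together with the previous paragraph identifies $\text{Aut}_T(\mathcal{E})$ with the right-hand side (a subgroup of $G$, being closed under products and inverses of $G$-valued regular maps).

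The single place where the hypothesis $s_\sigma(x_0)=e$ is essential — and hence the only real subtlety — is the gluing in step (i): without $P(\tau,\sigma)=1_G$, the overlap compatibility would instead force $g$ to commute with every $P(\tau,\sigma)$, and the clean description in the statement would fail. Everything else is a routine back-and-forth between explicit formulas in terms of the $\rho_\sigma$ on the principal orbit $O$ and the Zariski density of $O$ in each $X_\sigma$.
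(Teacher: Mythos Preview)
Your proof is correct and follows essentially the same strategy as the paper: realize $\text{Aut}_T(\mathcal{E})$ inside $G$ via $\Phi\mapsto g$ with $\Phi(e)=e\cdot g$, prove injectivity by density of $O$, and for the forward inclusion derive $\Theta_\sigma(tx)=\rho_\sigma(t)\Theta_\sigma(x)\rho_\sigma(t)^{-1}$ by comparing the two ways to compute $\Phi(t\cdot s_\sigma(x))$. The one organizational difference is in the converse. You define $\Phi$ chart-by-chart via $\Theta_\sigma$ and then verify the gluing identity $\phi_{\tau\sigma}\Theta_\sigma=\Theta_\tau\phi_{\tau\sigma}$ on $O$, which is exactly where you need $P(\tau,\sigma)=1_G$. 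The paper instead defines $\Phi$ \emph{globally} on $\mathcal{E}|_O$ by the single formula $\Phi(te\cdot h)=te\cdot gh$, then shows this extends over each $X_\sigma$ because $\Phi(s_\sigma(tx_0))=s_\sigma(tx_0)\cdot\rho_\sigma(t)g\rho_\sigma(t)^{-1}$; gluing is then automatic since all local extensions already agree on the dense $\mathcal{E}|_O$, and the explicit transition-function check is bypassed (the hypothesis $s_\sigma(x_0)=e$ enters only through $\varphi_\sigma(x_0)=g$). Your route makes the role of $P(\tau,\sigma)=1_G$ more transparent; the paper's route is a bit slicker in that it never writes down the compatibility condition.
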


\begin{proof} Note that,
	 by Lemma \ref{dsclass} or \cite[Lemma 2.6]{BDP}, if  $s_{\sigma}$  is a distinguished section of $\mathcal{E}$ over $X_{\sigma}$, so is $s_{\sigma}\cdot g$ for any $g \in G$. Thus, given $e \in  \mathcal{E}_{x_0} $, we can (and do) choose  a distinguished section \(s_{\sigma}\)  for each $\sigma \in \Xi^*$, such that \(s_{\sigma}(x_0)=e\).

	Consider the map  $\xi : \text{Aut}_T(\mathcal{E}) \rightarrow G$, uniquely determined by the relation 
	\begin{align*}
	\Phi(e)=e \cdot \xi(\Phi),\; {\rm for \; all\; } \Phi \in \text{Aut}_T(\mathcal{E}).
	\end{align*}
	For any \( \Phi \in \text{Aut}_T(\mathcal{E}) \), by  continuity of \(\Phi\) and  density of \(O\) in $X$, $\Phi$ is determined by  \(\Phi|_{\mathcal{E}_O}\). Here \(\mathcal{E}_O\) denotes the restriction of $\mathcal{E}$ to the open orbit \(O\). By \(T\)-equivariance, \(\Phi |_{\mathcal{E}_O}\) is determined by \(\Phi |_{\mathcal{E}_{x_0}}\), whereas \(\Phi|_{\mathcal{E}_{x_0}}\) is determined by $\Phi(e)$ using \(G\)-equivariance. This shows that the map $\xi$ is injective.
	
	Let $\Phi_1, \Phi_2 \in \text{Aut}_T(\mathcal{E})$. Then, 
	\begin{align*}
	(\Phi_2 \circ \Phi_1)(e)=\Phi_2(e \cdot \xi(\Phi_1))=\Phi_2(e) \cdot \xi(\Phi_1)=e \cdot \xi(\Phi_2) \xi(\Phi_1).
	\end{align*}
	Thus, $\xi(\Phi_2 \circ \Phi_1)=\xi(\Phi_2) \xi(\Phi_1)$, which shows that $\xi$ is a group homomorphism.
	
	Let $\Phi \in \text{Aut}_T(\mathcal{E})$. For any $\sigma \in \Xi^*$, define a map  \(\varphi_{\sigma}:X_{\sigma} \rightarrow G \) by \[\Phi(s_{\sigma}(x))=s_{\sigma}(x) \cdot \varphi_{\sigma}(x) \; {\rm for\; all\; } x \in X_{\sigma}.\]
	Then, by the definition of $\xi$, we have \(\varphi_{\sigma}(x_0)=\xi(\Phi)\). Observe that
	\begin{align*}
	&t \,\Phi(s_{\sigma}(x)) = t s_{\sigma}(x) \cdot \varphi_{\sigma}(x)\\
	&\Rightarrow \Phi(t s_{\sigma}(x))= t s_{\sigma}(x) \cdot \varphi_{\sigma}(x)   \\
	&\Rightarrow \Phi(s_{\sigma}(tx) \cdot \rho_{\sigma}(t))=s_{\sigma}(tx) \cdot \rho_{\sigma}(t) \, \varphi_{\sigma}(x)\\
	&\Rightarrow \Phi(s_{\sigma}(tx) ) \cdot \rho_{\sigma}(t)=s_{\sigma}(tx) \cdot \rho_{\sigma}(t)\,  \varphi_{\sigma}(x)\\
	&\Rightarrow s_{\sigma}(tx) \cdot \varphi_{\sigma}(tx)\, \rho_{\sigma}(t) = s_{\sigma}(tx) \cdot \rho_{\sigma}(t) \, \varphi_{\sigma}(x)\\
	&\Rightarrow \varphi_{\sigma}(tx) =\rho_{\sigma}(t) \, \varphi_{\sigma}(x)\, \rho_{\sigma}(t)^{-1}.
	\end{align*}
	For \(x=x_0\), we have
	 $$\varphi_{\sigma}(tx_0) =\rho_{\sigma}(t)  \varphi_{\sigma}(x_0) \rho_{\sigma}(t)^{-1}=\rho_{\sigma}(t) \xi(\Phi) \rho_{\sigma}(t)^{-1}\,.$$  Thus, \(\rho_{\sigma}(t) \xi(\Phi) \rho_{\sigma}(t)^{-1}\) extends to \(X_{\sigma}\).

	Conversely, let \(g \in G \) be such that $\rho_{\sigma}(t) g \rho_{\sigma}(t)^{-1} \text{ extends to $X_{\sigma}$ for all } \sigma \in \Xi^*$. Denote the said extension of  $\rho_{\sigma}(t) g \rho_{\sigma}(t)^{-1}$ by	$\varphi_{\sigma}: X_{\sigma} \to G$.
	First we define $\Phi \in \text{Aut}_T(\mathcal{E}|_O)$ by setting $\Phi(e)=e \cdot g$. Then $\Phi$ determines a \(T\)-equivariant morphism (hence an automorphism) of $\mathcal{E}|_O$ as follows:
	\[\Phi(te \cdot h)=te \cdot gh \text{ for all } t \in T \text{ and } h \in G.\]
	It remains to check that $\Phi$ extends to give a \(T\)-equivariant automorphism of $\mathcal{E}$.
	
	Note that, for $\sigma \in \Xi^*$, we have
	 $$\Phi(te)=t e \cdot g=t s_{\sigma}(x_0) \cdot g=s_{\sigma}(tx_0) \cdot \rho_{\sigma}(t) g.$$
	On the other hand, $$\Phi(te)=\Phi(t s_{\sigma}(x_0))=\Phi(s_{\sigma}(tx_0)  \cdot \rho_{\sigma}(t))=\Phi(s_{\sigma}(tx_0)) \cdot \rho_{\sigma}(t).$$ 
	Comparing the right hand sides of the last two equations, we have
	$$\Phi(s_{\sigma}(t x_0))=s_{\sigma}(t x_0)\, \cdot \rho_{\sigma}(t)\, g \, \rho_{\sigma}(t)^{-1}\,.$$
	 Since \(s_{\sigma}\) is regular on \(X_{\sigma}\) and \(\rho_{\sigma}(t) g \rho_{\sigma}(t)^{-1}\) extends to  \(\varphi_{\sigma}\) on  \(X_{\sigma} \), we have an extension of $\Phi$ to $X_{\sigma}$, for every $\sigma$, given by $$\Phi(s_{\sigma}(x)) := s_{\sigma}(x) \cdot \varphi_{\sigma}(x) \; $$ \(\text{ for all }  x \in X_{\sigma}.\)
	 Finally, these maps $\Phi|_{\mathcal{E}_{X_{\sigma}}}$ glue together to give a morphism of varieties $\widetilde{\Phi} : \mathcal{E} \rightarrow \mathcal{E}$ as the dense orbit \(O\) is contained in \(X_{\sigma}\) for all $\sigma \in \Xi^*$.  For each \(t \in T\), consider the subset
	 \[Y_t=\{y \in \mathcal{E} ~ |~ \widetilde{\Phi}(t y )=t \, \widetilde{\Phi}(y)\} \subseteq \mathcal{E}.\]
	 Clearly, \(Y_t\) is a closed subset of $\mathcal{E}$ containing \(\mathcal{E}|_O\). Thus \(Y=\bigcap\limits_{t \in T} Y_t\) is also closed and contains \(\mathcal{E}|_O\). By density of \(\mathcal{E}|_O\), we see that \(Y=\mathcal{E}\). So $\widetilde{\Phi}$ is \(T\)-equivariant. Similar arguments will show that $\widetilde{\Phi}$ commutes with the bundle projection  \(\mathcal{E} \rightarrow X\) and that $\widetilde{\Phi}$ is \(G\)-equivariant. Thus, we have $\widetilde{\Phi} \in \text{Aut}_T(\mathcal{E})$. 
		\end{proof}

  Some remarks are in order. First, the above description of \(\text{Aut}_T(\mathcal{E})\) depends on the choice of \(e \in \mathcal{E}_{x_0}\) up to conjugation by an element of $G$.  Consider an element \(e' \in \mathcal{E}_{x_0} \), possibly different from \(e\). Let \(h \in G\) be such that \(e'=e \cdot h\).  
  	Consider the map  $\xi' : \text{Aut}_T(\mathcal{E}) \rightarrow G$,  determined by the relation 
  	\begin{align*}
  	\Phi(e')=e' \cdot \xi'(\Phi).
  	\end{align*}
  	Then, we see that $\xi'(\Phi)=h^{-1} \xi(\Phi) h$. So, \(\text{Im }\xi'= h^{-1} \text{Im } \xi \ h \).
  
	Secondly, the above description of \(\text{Aut}_T(\mathcal{E})\) is independent of the choice of the distinguished sections $\{ s_{\sigma}\}$.	 For $\sigma \in \Xi^*$, consider another distinguished section \(s_{\sigma}': X_{\sigma} \rightarrow \mathcal{E}\) such that \(s'_{\sigma}(x_0)=e\). Then there exists $\alpha_{\sigma} \in G$ such that \(s'_{\sigma}(x_{\sigma})=s_{\sigma}(x_{\sigma}) \cdot \alpha_{\sigma}\). By Lemma \ref{distconj},  the homomorphism $\rho_{\sigma}'$ corresponding to the distinguished section \(s'_{\sigma}\) is given by 
	\begin{equation}\label{Rmk3.8_1}
	\rho_{\sigma}'(t)= \alpha_{\sigma}^{-1}  \rho_{\sigma}(t) \alpha_{\sigma} ~ \text{ for all } t \in T \,.
	\end{equation}
	We will show that for \(g \in G\),
	\begin{equation}\label{Rmk3.8_4}
	\rho_{\sigma}(t)g \rho_{\sigma}(t)^{-1} \text{ extends to } X_{\sigma} \text{ if and only if } \rho'_{\sigma}(t)g \rho'_{\sigma}(t)^{-1} \text{ extends to } X_{\sigma}.
	\end{equation} 
	Note that 
	\begin{equation}\label{Rmk3.8_2}
	\begin{split}
	\rho'_{\sigma}(t)g \rho'_{\sigma}(t)^{-1}= & \alpha_{\sigma}^{-1}  \rho_{\sigma}(t) \alpha_{\sigma} \,g\, \alpha_{\sigma}^{-1}  \rho_{\sigma}(t)^{-1} \alpha_{\sigma}  \\
	=& \alpha_{\sigma}^{-1} \,( \rho_{\sigma}(t) \alpha_{\sigma} \rho_{\sigma}(t)^{-1})\, (\rho_{\sigma}(t) g \rho_{\sigma}(t)^{-1})\, (\rho_{\sigma}(t) \alpha_{\sigma}^{-1} \rho_{\sigma}(t)^{-1})\, \alpha_{\sigma}.
	\end{split}
	\end{equation}
	For $\sigma \in \Xi^*$, define \(\varphi_{\sigma}: X_{\sigma} \rightarrow G\) by the relation \[s'_{\sigma}(x)=s_{\sigma}(x) \cdot \varphi_{\sigma}(x)  \text{ for all } x \in X_{\sigma}.\]
	Then we have 
	\begin{equation*}
	\begin{split}
	& t s'_{\sigma}(x)=t s_{\sigma}(x) \cdot \varphi_{\sigma}(x)\\
	& \Rightarrow s'_{\sigma}(tx) \cdot \rho_{\sigma}'(t)=s_{\sigma}(tx) \cdot \rho_{\sigma}(t)  \varphi_{\sigma}(x)\\
	& \Rightarrow s_{\sigma}(tx) \cdot \varphi_{\sigma}(tx) \rho_{\sigma}'(t)=s_{\sigma}(tx) \cdot \rho_{\sigma}(t)  \varphi_{\sigma}(x)\\
	& \Rightarrow \varphi_{\sigma}(tx)=\rho_{\sigma}(t) \varphi_{\sigma}(x) \rho_{\sigma}'(t)^{-1} \\
	& \Rightarrow \varphi_{\sigma}(tx)=\rho_{\sigma}(t) \varphi_{\sigma}(x) \alpha_{\sigma}^{-1}\rho_{\sigma}(t)^{-1} \alpha_{\sigma}.
	\end{split}
	\end{equation*}
	\noindent	
	As \( \varphi_{\sigma}(x_0)=1_G \), putting \(x=x_0\), we have \[\varphi_{\sigma}(tx_0)=\rho_{\sigma}(t) \alpha_{\sigma}^{-1}\rho_{\sigma}(t)^{-1} \alpha_{\sigma} .\] 
	Thus, \(\rho_{\sigma}(t) \alpha_{\sigma}^{-1}\rho_{\sigma}(t)^{-1}\) extends to \(X_{\sigma}\). A similar calculation  shows that \(\rho_{\sigma}(t) \alpha_{\sigma}\rho_{\sigma}(t)^{-1}\) also extends to \(X_{\sigma}\). Now from \eqref{Rmk3.8_2}, we see that if \(\rho_{\sigma}(t)g \rho_{\sigma}(t)^{-1}\) extends to \(X_{\sigma}\) for \(g \in G\), then \(\rho'_{\sigma}(t)g \rho'_{\sigma}(t)^{-1}\) also extends to \(X_{\sigma}\). Finally note that using \eqref{Rmk3.8_1}, similar arguments as above give the reverse implication of \eqref{Rmk3.8_4}.

	Now, we wish to develop a more precise characterisation of $\text{Aut}_T(\mathcal{E})$ when the group $G$ is connected and reductive.
	 Given a $1$-psg $\mu$ of $G$, the parabolic subgroup $P(\mu)$ of $G$ associated to $\mu$ is given  by (cf. \cite[Section 8.4]{Springer})
	  $$P(\mu) := \{g \in G \mid \lim_{z\to 0}\,  \mu(z) g \mu(z)^{-1} \; {\rm exists} \}\,. $$  

\begin{lemma}\label{7}
	Assume \(G\) to be a connected reductive linear algebraic group. Let \(\{s_{\sigma}\}_{\sigma \in \Xi^{\ast} }\) be a collection of distinguished sections of  a \(T\)-equivariant principal \(G\)-bundle $\mathcal{E}$ over \(X\) such that \(s_{\sigma}(x_0)=s_{\tau}(x_0)\) for all \(\tau, \sigma \in \Xi^*\). Then, for any ray \(\alpha \in (\sigma \cap \tau)(1)\), we have \[P(\rho_{\sigma} \circ \lambda^{v_{\alpha}}) =P(\rho_{\tau} \circ \lambda^{v_{\alpha}}).\] $($Recall that for any $\alpha \in \Sigma(1)$, $v_{\alpha} \in N$ denotes the primitive integral generator of $\alpha$ and $\lambda^{v_{\alpha}}: \mathbb{C}^{\ast} \to T $ denotes the corresponding 1-psg of $T$.$)$
\end{lemma}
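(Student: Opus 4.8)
The plan is to read off the transition function between the two distinguished trivializations near the point determined by $v_\alpha$, to observe that its regularity there is exactly the assertion that a certain product of $1$-psg's of $G$ converges as $z\to 0$, and then to convert that convergence into a two-sided inclusion of parabolic subgroups by a conjugation identity.

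First I would set $e := s_\sigma(x_0) = s_\tau(x_0)$ and let $\phi_{\tau\sigma} : X_\sigma\cap X_\tau \to G$ be the transition function determined by $s_\sigma = s_\tau\cdot\phi_{\tau\sigma}$, as in \eqref{transition}. The hypothesis forces $\phi_{\tau\sigma}(x_0) = 1_G$, and then \eqref{transprop} gives $\phi_{\tau\sigma}(t x_0) = \rho_\tau(t)\,\rho_\sigma(t)^{-1}$ for every $t\in T$. Now fix a ray $\alpha\in(\sigma\cap\tau)(1)$. Since $v_\alpha$ lies in the cone $\sigma\cap\tau$ of the fan and $X_\sigma\cap X_\tau = X_{\sigma\cap\tau}$, the limit $x_\alpha := \lim_{z\to 0}\lambda^{v_\alpha}(z)\,x_0$ exists and lies in $X_\sigma\cap X_\tau$ (see Section \ref{prelim}). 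Writing $\mu_\sigma := \rho_\sigma\circ\lambda^{v_\alpha}$ and $\mu_\tau := \rho_\tau\circ\lambda^{v_\alpha}$ for the resulting $1$-psg's of $G$, the displayed formula gives $\phi_{\tau\sigma}(\lambda^{v_\alpha}(z)\,x_0) = \mu_\tau(z)\,\mu_\sigma(z)^{-1}$, so regularity of $\phi_{\tau\sigma}$ at $x_\alpha$ shows that $\lim_{z\to 0}\mu_\tau(z)\,\mu_\sigma(z)^{-1} = \phi_{\tau\sigma}(x_\alpha)$ exists in $G$; as $\phi_{\sigma\tau} = \phi_{\tau\sigma}^{-1}$ is regular on the same set, $\lim_{z\to 0}\mu_\sigma(z)\,\mu_\tau(z)^{-1}$ exists as well.

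It remains to deduce $P(\mu_\sigma) = P(\mu_\tau)$ from the existence of these limits. Put $f(z) := \mu_\tau(z)\,\mu_\sigma(z)^{-1}$; by the above $f$ extends over $z=0$, hence so does $f^{-1}$. For $g\in P(\mu_\sigma)$ I would use the conjugation identity
\[
\mu_\tau(z)\,g\,\mu_\tau(z)^{-1} \;=\; f(z)\,\bigl(\mu_\sigma(z)\,g\,\mu_\sigma(z)^{-1}\bigr)\,f(z)^{-1}\,;
\]
each of the three factors on the right has a limit as $z\to 0$ (the middle one by the definition of $P(\mu_\sigma)$), so the left side does too, i.e. $g\in P(\mu_\tau)$. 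This gives $P(\mu_\sigma)\subseteq P(\mu_\tau)$, and interchanging the roles of $\sigma$ and $\tau$ (now using the existence of $\lim_{z\to 0}\mu_\sigma(z)\,\mu_\tau(z)^{-1}$) gives the reverse inclusion. Hence $P(\rho_\sigma\circ\lambda^{v_\alpha}) = P(\rho_\tau\circ\lambda^{v_\alpha})$.

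There is no genuinely hard step here; the only points that need care are the identification $X_\sigma\cap X_\tau = X_{\sigma\cap\tau}$ together with the fact that $v_\alpha$ lies in this common cone — so that the limit point $x_\alpha$ actually sits inside the domain of $\phi_{\tau\sigma}$ — and the symmetric bookkeeping needed to get the inclusion in both directions. (Note that reductivity of $G$ is not actually used in the argument; it is needed only so that the sets $P(\mu)$ are parabolic subgroups in the usual sense.)
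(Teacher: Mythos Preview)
Your argument is correct. The identity
\[
\mu_\tau(z)\,g\,\mu_\tau(z)^{-1} \;=\; f(z)\bigl(\mu_\sigma(z)\,g\,\mu_\sigma(z)^{-1}\bigr)f(z)^{-1}
\]
together with the regularity of $f$ and $f^{-1}$ at $z=0$ gives both inclusions directly; you have also correctly observed that reductivity plays no role in the argument itself, only in interpreting $P(\mu)$ as a parabolic subgroup.

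The paper takes a slightly different route. Instead of the sandwich identity, it examines the $T$-action at the distinguished point $x_\alpha$ (not just at $x_0$) to obtain the \emph{conjugation relation}
\[
\mu_\sigma(z) \;=\; \phi_{\tau\sigma}(x_\alpha)^{-1}\,\mu_\tau(z)\,\phi_{\tau\sigma}(x_\alpha),
\]
then separately checks that $\phi_{\tau\sigma}(x_\alpha)\in P(\mu_\tau)$ (indeed, in its unipotent radical), and concludes $P(\mu_\sigma)=\phi_{\tau\sigma}(x_\alpha)^{-1}P(\mu_\tau)\phi_{\tau\sigma}(x_\alpha)=P(\mu_\tau)$. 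Your approach is more economical for the bare equality of parabolics; the paper's approach buys the extra structural information that the two $1$-psg's are genuinely conjugate in $G$ by an element of the unipotent radical of $P(\mu_\tau)$, which is recorded in the remark immediately following the lemma and is not visible from your argument.
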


\begin{proof}	Recall the transition function \(\phi_{\tau \sigma}: X_{\sigma} \cap X_{\tau} \rightarrow G\) 
	\eqref{transition},    that  satisfies 
	$$
	s_{\sigma}(x)= s_{\tau}(x) \cdot \phi_{\tau \sigma}(x), \, \text{ for all }\, x \in X_{\sigma} \cap X_{\tau}\, .
	$$
	By hypothesis, \(\phi_{\tau \sigma}(x_0)=1_G\). So, by \eqref{transprop},  we have 
	\begin{equation}\label{transition1}
	\phi_{\tau \sigma}(t  x_0)=\rho_{\tau}(t)\rho_{\sigma}(t)^{-1}.
	\end{equation}
	Let $\gamma$ be any subcone of $\sigma \cap \tau$. Then, 
	\[t  s_{\sigma}(x_{\gamma}) =t s_{\tau}(x_{\gamma}) \cdot \phi_{\tau \sigma}(x_{\gamma})=s_{\tau}(x_{\gamma})  \cdot \rho_{\tau }(t) \phi_{\tau \sigma}(x_{\gamma}).\]
	\noindent
	On the other hand, \[t  s_{\sigma}(x_{\gamma}) =s_{\sigma}(x_{\gamma}) \cdot \rho_{\sigma}(t)= s_{\tau}(x_{\gamma}) \cdot \phi_{\tau \sigma}(x_{\gamma}) \rho_{\sigma}(t).\] 
	So, we obtain, 
	\begin{equation}\label{1.1}
	\rho_{\sigma}(t)=\phi_{\tau \sigma}(x_{\gamma})^{-1}  \rho_{\tau }(t) \phi_{\tau \sigma}(x_{\gamma}).
	\end{equation}
	In particular, for \(\alpha \in (\sigma \cap \tau)(1)\), putting \(t=\lambda^{v_{\alpha}}(z)\) where \(z \in \C^*\) and $\gamma=\alpha$ in \eqref{1.1}, we have 
	\begin{equation}\label{action}
	(	\rho_{\sigma} \circ \lambda^{v_{\alpha}})(z) = \phi_{\tau \sigma}(x_{\alpha})^{-1} (\rho_{\tau } \circ \lambda^{v_{\alpha}})(z) \phi_{\tau \sigma}(x_{\alpha}).
	\end{equation} 
	\noindent
	Using \eqref{transition1}, we have \begin{equation}\label{eqtemp}
	 \phi_{\tau\sigma} (x_{\alpha}) =  \lim_{z \to 0} \phi_{ \tau \sigma} (\lambda^{v_{\alpha}}(z) x_0) =
	\lim_{z \to 0}\,
	(\rho_{\tau} \circ \lambda^{v_{\alpha}}) (z) \, (\rho_{\sigma}\circ \lambda^{v_{\alpha}}) (z) ^{-1}\, .  
	\end{equation}
	Now, applying \eqref{action} to the right hand side of \eqref{eqtemp}, we have
	$$ \phi_{\tau \sigma}(x_{\alpha})= \lim\limits_{z \rightarrow 0} \, (\rho_{\tau } \circ \lambda^{v_{\alpha}})(z) \, \phi_{\tau \sigma}(x_{\alpha})^{-1}( \rho_{\tau } \circ \lambda^{v_{\alpha}})(z)^{-1} \phi_{\tau \sigma}(x_{\alpha}) $$
	which implies that 
	\begin{equation}\label{unipotent}
	\lim\limits_{z \rightarrow 0} \, (\rho_{\tau } \circ \lambda^{v_{\alpha}})(z)\, \phi_{\tau \sigma}(x_{\alpha})^{-1} (\rho_{\tau } \circ \lambda^{v_{\alpha}})(z)^{-1} =1_G \,.
	\end{equation}
	Therefore, as $G$ is reductive, 
	\(\phi_{\tau \sigma}(x_{\alpha}) \in P(\rho_{\tau} \circ \lambda^{v_{\alpha}})\).
	 Thus, from \eqref{action}, we see that 
	\[P(\rho_{\sigma} \circ \lambda^{v_{\alpha}}) =\phi_{\tau \sigma}(x_{\alpha})^{-1} P(\rho_{\tau} \circ \lambda^{v_{\alpha}}) \phi_{\tau \sigma}(x_{\alpha})=P(\rho_{\tau} \circ \lambda^{v_{\alpha}}).\]
	\end{proof}

\begin{rmk}{\rm
	From \eqref{unipotent}, we in fact get that \(\phi_{\tau \sigma}(x_{\alpha}) \in R_uP(\rho_{\tau} \circ \lambda^{v_{\alpha}})\), the unipotent radical of the parabolic subgroup \(P(\rho_{\tau} \circ \lambda^{v_{\alpha}})\) (see \cite[Exercise 8.4.6(5)]{Springer}).}
\end{rmk}

\begin{defn}\label{Palpha}
	Let $\alpha \in \Xi(1)$. We set \(P^{\alpha}:=P(\rho_{\sigma} \circ \lambda^{v_{\alpha}}) \), the parabolic subgroup of \(G\) corresponding to the  $1$-psg \(\rho_{\sigma} \circ \lambda^{v_{\alpha}}\) of $G$.
\end{defn}
\noindent
Note that, by Lemma \ref{7}, the definition of \(P^{\alpha}\) is independent of the choice of the cone $\sigma$ containing \(\alpha\) as a ray.

\begin{thm}\label{Aut}
	Let \(G\) be a connected reductive linear algebraic group. Let \(e \in \mathcal{E}_{x_0}\). For each $\sigma \in \Xi^*$, fix a distinguished section \(s_{\sigma}: X_{\sigma} \rightarrow \mathcal{E} \) such that \(s_{\sigma}(x_0)=e\). Then, for any \(\sigma \in \Xi^*\), \(\rho_{\sigma}(t) g \rho_{\sigma}(t)^{-1}\) extends to \(X_{\sigma}\) if and only if $g \in \bigcap\limits_{\alpha \in \sigma(1)} P^{\alpha} $. In particular,
	\begin{align*}
	{\text{Aut}_T(\mathcal{E}) = \bigcap\limits_{\alpha \in \Xi(1)} P^{\alpha} \subset G}.
	\end{align*}
\end{thm}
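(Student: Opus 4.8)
The plan is to first reduce the theorem to a statement about a single affine chart, and then to make both conditions completely explicit using the weight-space decomposition of a faithful representation of $G$ restricted along $\rho_\sigma$. Since we have chosen the distinguished sections so that $s_\sigma(x_0)=e$ for all $\sigma$, Proposition \ref{automorphism} applies and tells us that $g\in\text{Aut}_T(\mathcal{E})$ precisely when $\rho_\sigma(t)g\rho_\sigma(t)^{-1}$ extends to a $G$-valued morphism on $X_\sigma$ for every $\sigma\in\Xi^*$. Moreover every ray $\alpha\in\Xi(1)$ is a face of some maximal cone $\sigma$, hence a ray of that $\sigma$, so $\bigcap_{\alpha\in\Xi(1)}P^\alpha=\bigcap_{\sigma\in\Xi^*}\bigcap_{\alpha\in\sigma(1)}P^\alpha$. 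Therefore the displayed formula for $\text{Aut}_T(\mathcal{E})$ is an immediate consequence of the first assertion, and it suffices to fix $\sigma\in\Xi^*$ and show that $\rho_\sigma(t)g\rho_\sigma(t)^{-1}$ extends to $X_\sigma$ if and only if $g\in\bigcap_{\alpha\in\sigma(1)}P^\alpha$.

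For the local statement I would fix a faithful representation $G\hookrightarrow\mathrm{GL}(r,\C)$ and decompose $\C^r=\bigoplus_{u\in U}V_u$ into the weight spaces for $T$ acting through $\rho_\sigma$, where $U\subset M$ is the finite set of weights; this decomposition involves no choices. In a basis adapted to it, $\rho_\sigma(t)$ acts on $V_u$ as the scalar $\chi^u(t)$, and for $g\in G$ with block components $g_{u'u}\colon V_u\to V_{u'}$ one computes that the $(u',u)$-block of $\rho_\sigma(t)g\rho_\sigma(t)^{-1}$ equals $\chi^{u'-u}(t)\,g_{u'u}$; in particular every matrix entry of this function is a scalar multiple of a single character of $T$, and $\det$ pulls back to the constant $\det g$.

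Now I would translate ``extends to $X_\sigma$'' into a monomial condition. As $G$ is closed in $\mathrm{GL}(r,\C)$ and $T$ is dense in $X_\sigma$, the function $t\mapsto\rho_\sigma(t)g\rho_\sigma(t)^{-1}$ extends to a $G$-valued morphism on $X_\sigma$ iff it extends to a $\mathrm{GL}(r,\C)$-valued one; and since $X_\sigma$ is affine with $\C[X_\sigma]=\C[S_\sigma]\subset\C[M]$, a morphism to the affine variety $\mathrm{GL}(r,\C)$ exists exactly when the pullbacks of the coordinate functions $x_{ij}$ and $1/\det$ are regular on $X_\sigma$ (the latter being the harmless constant $1/\det g$). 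Hence the extension exists iff each $\chi^{u'-u}$ with $g_{u'u}\neq 0$ lies in $\C[S_\sigma]$, i.e.\ iff $u'-u\in\sigma^\vee\cap M$, i.e.\ (because $\sigma$ is generated by its rays) iff $\langle u'-u,\,v_\alpha\rangle\ge 0$ for all $\alpha\in\sigma(1)$ whenever $g_{u'u}\neq 0$. On the other hand $(\rho_\sigma\circ\lambda^{v_\alpha})(z)$ acts on $V_u$ as $z^{\langle u,v_\alpha\rangle}$, so straight from the definition of the parabolic $P(\mu)$ one gets $P^\alpha=\{g\in G:\ g_{u'u}=0 \text{ unless } \langle u',v_\alpha\rangle\ge\langle u,v_\alpha\rangle\}$; intersecting over $\alpha\in\sigma(1)$ reproduces exactly the condition just obtained. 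This yields the desired equivalence and hence the theorem.

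The step that needs the most care is the middle one: converting ``extends to a morphism $X_\sigma\to G$'' into the membership $u'-u\in\sigma^\vee$. This relies on the basic fact that a morphism from $X_\sigma$ to an affine variety is determined by, and exists as soon as, the pullbacks of generating regular functions on the target are regular on $X_\sigma$, together with the passage from the ambient $\mathrm{GL}(r,\C)$ back to the closed subgroup $G$ via density of $T$. Once this is in place, the identifications of $\sigma^\vee$ with the ray inequalities and of the ray inequalities with the parabolics $P^\alpha$ are routine, and the global formula drops out of Proposition \ref{automorphism} as explained above.
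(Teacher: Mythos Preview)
Your argument is correct, but it takes a genuinely different route from the paper's.

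The paper argues the two implications separately and representation-free. For the forward direction it observes that if $\rho_\sigma(t)\,g\,\rho_\sigma(t)^{-1}$ extends to $X_\sigma$ then in particular it extends to each $x_\alpha=\lim_{z\to 0}\lambda^{v_\alpha}(z)x_0$, so $g\in P^\alpha$. For the converse it invokes Mumford's inequality for commuting one-parameter subgroups \cite[p.~62, (**)]{GIT}: for any face $\gamma\preceq\sigma$ with $v_\gamma$ a lattice point in its relative interior, one has $\bigcap_{\alpha\in\gamma(1)}P(\rho_\sigma\circ\lambda^{v_\alpha})\subseteq P(\rho_\sigma\circ\lambda^{v_\gamma})$, so the conjugation map extends to each distinguished point $x_\gamma$, then by $T$-equivariance to each orbit $O_\gamma$, and hence to $X_\sigma=\bigsqcup_{\gamma\preceq\sigma}O_\gamma$. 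Your approach instead fixes a faithful embedding $G\hookrightarrow\mathrm{GL}(r,\C)$, passes to the $T$-weight decomposition of $\C^r$ under $\rho_\sigma$, and thereby writes every matrix entry of $\rho_\sigma(t)\,g\,\rho_\sigma(t)^{-1}$ as a scalar times a single character $\chi^{u'-u}$; extendability to $X_\sigma$ becomes the monomial condition $u'-u\in\sigma^\vee$ on the nonzero blocks, which you identify with the ray-by-ray inequalities defining $\bigcap_{\alpha\in\sigma(1)}P^\alpha$. This is essentially the Kaneyama-type calculus of Section~\ref{Kan description} applied to this particular question. The trade-off: the paper's proof is intrinsic (no auxiliary representation) but relies on the GIT lemma and an orbit-by-orbit extension; your proof is more elementary and handles both implications in one stroke, at the cost of choosing an embedding.
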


\begin{proof}
	Suppose \(\rho_{\sigma}(t) g \rho_{\sigma}(t)^{-1}\) extends to \(X_{\sigma}\). In particular, it extends to  \(x_{\alpha}\) for any $\alpha \in \sigma(1)$. Since \(x_{\alpha}=\lim\limits_{z \rightarrow 0} \lambda^{v_{\alpha}}(z) x_0\), it follows that the limit  
	\(\lim\limits_{z \rightarrow 0} \, (\rho_{\sigma} \circ \lambda^{v_{\alpha}})(z)\, g \,(\rho_{\sigma} \circ \lambda^{v_{\alpha}})(z)^{-1}\)  exists.  Hence, \( g \in P(\rho_{\sigma} \circ \lambda^{v_{\alpha}}) = P^{\alpha}.\) Thus $g \in \bigcap\limits_{\alpha \in \sigma(1)}  P^{\alpha} $.

	Conversely, if $ g \in \bigcap\limits_{\alpha \in \sigma(1)} P^{\alpha} $, we show that $\rho_{\sigma}(t) g \rho_{\sigma}(t)^{-1} \text{ extends to }X_{\sigma}$. Let $\gamma \preceq \sigma$. Write $\gamma=Cone (v_1, \ldots, v_r)$ where \(v_i \in \sigma(1) \). Define \(v_{\gamma}:=v_1+\ldots+v_r \). Then \(v_{\gamma}\) is in the relative interior of the cone \(\gamma\). Since 
	\[(\rho_{\sigma} \circ \lambda^{v_i})(z) (\rho_{\sigma} \circ \lambda^{v_j})(z)=(\rho_{\sigma} \circ \lambda^{v_j})(z) (\rho_{\sigma} \circ \lambda^{v_i})(z),\] 
	by \cite[Page 62 equation (**)]{GIT}, we have 
	\[\bigcap\limits_{\alpha \in \gamma(1) } P(\rho_{\sigma} \circ \lambda^{v_{\alpha}}) \subseteq P(\rho_{\sigma} \circ \lambda^{v_{\gamma}}) .\]
	 This shows that $\rho_{\sigma}(t) g \rho_{\sigma}(t)^{-1}$ extends to the point \(x_{\gamma} \in O_{\sigma}\). Let \(y_{\gamma}\) be the value of the extended function at \(x_{\gamma}\). Any point of \(O_{\gamma}\) is of the form \(t  x_{\gamma}\) for some \(t \in T\). Thus we can extend $\rho_{\sigma}(t) g \rho_{\sigma}(t)^{-1}$ to \(O_{\gamma}\) by sending 
	 $$t  x_{\gamma} \mapsto \rho_{\sigma}(t)  y_{\gamma} \rho_{\sigma}(t)^{-1} .$$ 
	 Since $\rho_{\sigma}(t) g \rho_{\sigma}(t)^{-1}$ extends to \(O_{\gamma}\) for any subcone $\gamma \preceq \sigma$, it extends to the whole of \(X_{\sigma}(=\bigcup\limits_{\gamma \preceq \sigma} O_{\gamma})\). Thus by Proposition \ref{automorphism}, we have 
	 $$ {\text{Aut}_T(\mathcal{E}) = \bigcap\limits_{\alpha \in \Xi(1)} P^{\alpha} \subset G}\, .$$ 	\end{proof}

Next we give a description of \(T\)-equivariant morphisms between \(T\)-equivariant principal \(G\)-bundles over \(X\).

\begin{prop}\label{morph}
	Let $\mathcal{E}, \mathcal{E}'$ be \(T\)-equivariant principal \(G\)-bundles over \(X\). Let \(e \in \mathcal{E}_{x_0}\) $($respectively, \(e' \in \mathcal{E}'_{x_0}\)$)$ and for each $\sigma \in \Xi^*$, take a distinguished section \(s_{\sigma}: X_{\sigma} \rightarrow \mathcal{E}\) $($respectively, \(s'_{\sigma}: X_{\sigma} \rightarrow \mathcal{E'}\)$)$ such that \(s_{\sigma}(x_0)=e\) $($respectively, \(s'_{\sigma}(x_0)=e'\)$)$. Let $\rho_{\sigma}: T \to G $
	and $\rho_{\sigma}':T \to G$ be the group homomorphisms corresponding to $s_{\sigma}$ and $s_{\sigma}'$ respectively.  Then \(\text{Mor}_T(\mathcal{E}, \mathcal{E'})\), the set of all \(T\)-equivariant morphisms from $\mathcal{E}$ to $\mathcal{E'}$, admits a bijective correspondence with the set 
	\[\{(g_0, \{g_{\sigma}\}_{\sigma \in \Xi^*})~|~ g_0, g_{\sigma} \in G, ~ \rho_{\sigma}'= g_{\sigma} \rho_{\sigma} g_{\sigma}^{-1},\; \rho_{\sigma} g_{\sigma}^{-1} g_0 \rho_{\sigma}^{-1} \text{ extends to } X_{\sigma} \text{ for  all } \sigma \in \Xi^*\} .\]
	Moreover, if \(G\) is a connected reductive linear algebraic group, the above set is equal to the following
	\[\{ (g_0, \{g_{\sigma}\}_{\sigma \in \Xi^*})~|~ g_0, g_{\sigma} \in G, ~ \rho_{\sigma}'= g_{\sigma} \rho_{\sigma} g_{\sigma}^{-1},\; g_{\sigma}^{-1} g_0 \in \bigcap_{\alpha \in \sigma(1) } P^{\alpha}  \text{ for  all } \sigma \in \Xi^* \} .\]
\end{prop}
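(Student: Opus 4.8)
The strategy is to mimic the proof of Proposition~\ref{automorphism}, working with the pair $(\mathcal{E},\mathcal{E}')$ instead of a single bundle, and then to read off the reductive refinement from Theorem~\ref{Aut}. I would first construct the map $\text{Mor}_T(\mathcal{E},\mathcal{E}') \to \{(g_0,\{g_{\sigma}\})\}$. Given $\Phi$, set $g_0 \in G$ by $\Phi(e) = e'\cdot g_0$. For each $\sigma$, the section $\Phi\circ s_{\sigma}$ of $\mathcal{E}'$ over $X_{\sigma}$ has the same local action function as $s_{\sigma}$ by \cite[Lemma 2.4]{BDP} (this is exactly \eqref{rhos2hat} in the proof of Proposition~\ref{iso_admi}), so it is distinguished with homomorphism $\rho_{\sigma}$. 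Writing $(\Phi\circ s_{\sigma})(x) = s'_{\sigma}(x)\cdot\varphi_{\sigma}(x)$ for the algebraic map $\varphi_{\sigma}:X_{\sigma}\to G$ and putting $g_{\sigma}:=\varphi_{\sigma}(x_{\sigma})$, evaluation at $x_0$ gives $\varphi_{\sigma}(x_0)=g_0$ for every $\sigma$, while Lemma~\ref{distconj} applied to the two distinguished sections $s'_{\sigma}$ and $\Phi\circ s_{\sigma}$ of $\mathcal{E}'$ yields $\rho_{\sigma} = g_{\sigma}^{-1}\rho'_{\sigma}g_{\sigma}$, i.e.\ $\rho'_{\sigma}=g_{\sigma}\rho_{\sigma}g_{\sigma}^{-1}$. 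Finally, Lemma~\ref{dsclass} (applied inside $\mathcal{E}'$ to $s'_{\sigma}$ and $\varphi_{\sigma}$) gives the transformation law \eqref{phiprop}; substituting $\varphi_{\sigma}(x_0)=g_0$, $\varphi_{\sigma}(x_{\sigma})=g_{\sigma}$ and $\rho'_{\sigma}=g_{\sigma}\rho_{\sigma}g_{\sigma}^{-1}$ and simplifying, it collapses to $\varphi_{\sigma}(tx_0)=g_{\sigma}\,\rho_{\sigma}(t)\,(g_{\sigma}^{-1}g_0)\,\rho_{\sigma}(t)^{-1}$. Since $\varphi_{\sigma}$ is regular on $X_{\sigma}$, this forces $\rho_{\sigma}(t)(g_{\sigma}^{-1}g_0)\rho_{\sigma}(t)^{-1}$ to extend over $X_{\sigma}$, so $(g_0,\{g_{\sigma}\})$ lies in the asserted set. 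This assignment is injective: as in Proposition~\ref{automorphism}, $\Phi$ is determined by its restriction to $\mathcal{E}|_O$ by density and continuity, and $\Phi|_{\mathcal{E}|_O}$ is determined by $g_0$ via $T$- and $G$-equivariance.

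For surjectivity, given $(g_0,\{g_{\sigma}\})$ in the set I would define $\Phi$ on $\mathcal{E}|_O$ by $\Phi(t e\cdot h):=t e'\cdot g_0 h$, a well-defined $T$- and $G$-equivariant isomorphism $\mathcal{E}|_O\to\mathcal{E}'|_O$. Comparing $\Phi(te)=t\Phi(e)$ computed in two ways, exactly as in Proposition~\ref{automorphism}, gives $\Phi(s_{\sigma}(tx_0)) = s'_{\sigma}(tx_0)\cdot g_{\sigma}\big(\rho_{\sigma}(t)(g_{\sigma}^{-1}g_0)\rho_{\sigma}(t)^{-1}\big)$; the bracketed factor extends over $X_{\sigma}$ by hypothesis and $s'_{\sigma}$ is regular there, so $\Phi$ extends to a bundle morphism $\mathcal{E}|_{X_{\sigma}}\to\mathcal{E}'|_{X_{\sigma}}$. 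These local extensions agree on the dense orbit $O\subseteq X_{\sigma}$, hence glue to a morphism of varieties $\widetilde{\Phi}:\mathcal{E}\to\mathcal{E}'$; $T$-equivariance, $G$-equivariance and compatibility with the bundle projections then follow from the standard argument that the locus where each identity holds is closed and contains the dense subset $\mathcal{E}|_O$. Tracing through, $\widetilde{\Phi}$ is sent back to $(g_0,\{g_{\sigma}\})$, so the correspondence is a bijection.

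When $G$ is connected and reductive, the hypothesis that $s_{\sigma}(x_0)=e$ for all $\sigma$ is precisely the hypothesis of Lemma~\ref{7}, so each parabolic $P^{\alpha}$ is well-defined, and Theorem~\ref{Aut} applies verbatim with $g$ replaced by $g_{\sigma}^{-1}g_0$: for a fixed $\sigma$, the function $\rho_{\sigma}(t)(g_{\sigma}^{-1}g_0)\rho_{\sigma}(t)^{-1}$ extends over $X_{\sigma}$ if and only if $g_{\sigma}^{-1}g_0\in\bigcap_{\alpha\in\sigma(1)}P^{\alpha}$. Substituting this equivalence into the description obtained above gives the second set.

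The computational heart is the identity $\varphi_{\sigma}(tx_0)=g_{\sigma}\,\rho_{\sigma}(t)(g_{\sigma}^{-1}g_0)\rho_{\sigma}(t)^{-1}$, into which Lemmas~\ref{dsclass}, \ref{dsclass2} and~\ref{distconj} are fed; once it is available, both directions and the reductive refinement are essentially formal. I expect the point requiring the most care to be the verification that the correspondence is genuinely bijective: that the tuple read off from $\Phi$ is canonical (which holds because the sections $s_{\sigma}, s'_{\sigma}$ are fixed once and for all and $g_0$ is intrinsic to $\Phi$), that the morphism built from a tuple returns that tuple, and---in the backward direction---that the locally-defined extensions over the various $X_{\sigma}$ agree on overlaps, which they do precisely because all of them restrict to the intrinsically-defined morphism on $\mathcal{E}|_O$.
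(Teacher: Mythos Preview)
Your proposal is correct and follows essentially the same approach as the paper: you push $s_{\sigma}$ through $\Phi$ to get a second distinguished section of $\mathcal{E}'$, compare it to $s'_{\sigma}$ via $\varphi_{\sigma}$, and extract $(g_0,g_{\sigma})=(\varphi_{\sigma}(x_0),\varphi_{\sigma}(x_{\sigma}))$ using Lemmas~\ref{distconj} and~\ref{dsclass}; the converse and the reductive refinement via Theorem~\ref{Aut} are likewise identical to the paper's argument. One small caution: your final line ``$\widetilde{\Phi}$ is sent back to $(g_0,\{g_{\sigma}\})$'' is not literally justified, since the backward construction uses only $g_0$ to build $\Phi$ and the $g_{\sigma}$ enter only to certify extendability---so two tuples with the same $g_0$ but different $g_{\sigma}$ (differing by an element of the centralizer of $\rho_{\sigma}(T)$) yield the same morphism; the paper's proof has the same looseness and does not verify bijectivity explicitly either.
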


\begin{proof}
	Let  \(\Psi \in \text{Mor}_T(\mathcal{E}, \mathcal{E'})\). Note that, by \cite[Lemma 2.4]{BDP},  \( \widehat{s}'_{\sigma} := \Psi \circ s_{\sigma} \) is a distinguished section of $\mathcal{E'}$ over $X_{\sigma}$. Moreover, the 
	homomorphism $\widehat{\rho}_{\sigma}': T \to G$ associated to $ \widehat{s}'_{\sigma} $, satisfies 
	\begin{equation}\label{rhohatprime}
	\widehat{\rho}_{\sigma}' = \rho_{\sigma}\,.
	\end{equation}
	\noindent
	For any $\sigma \in \Xi^*$, define a map \(\varphi_{\sigma} : X_{\sigma} \rightarrow G\) by
	\begin{equation}\label{Mor1}
	\widehat{s}_{\sigma}'(x)=s'_{\sigma}(x) \cdot \varphi_{\sigma}(x) \text{ for all } x \in X_{\sigma}.
	\end{equation}
	Note that by the same argument in Proposition \ref{automorphism}, \(\Psi\) is determined by its value at \(e \in \mathcal{E}_{x_0}\). Now, using \eqref{Mor1}, we have 
	\begin{equation*}
	\Psi(e)=\Psi(s_{\sigma}(x_0))= \widehat{s}'_{\sigma}(x_0) =s'_{\sigma}(x_0) \cdot \varphi_{\sigma}(x_0) =e' \cdot  \varphi_{\sigma}(x_0).
	\end{equation*}
Thus, \(\varphi_{\sigma}(x_0)\) does not depend on the choice of $\sigma$.
	\noindent
	Note that, by \eqref{Mor1} and  Lemma \ref{distconj},
	\begin{equation}\label{Mor3}
	\widehat{\rho}_{\sigma}'(t)=\varphi_{\sigma}(x_{\sigma})^{-1} \rho_{\sigma}'(t) \varphi_{\sigma}(x_{\sigma}) \,.
	\end{equation}

	\noindent
	Using Lemma \ref{dsclass} and \eqref{Mor3}, we get
	\[\varphi_{\sigma}(tx_0)= \varphi_{\sigma}(x_{\sigma}) \rho_{\sigma}(t) \varphi_{\sigma}(x_{\sigma})^{-1} \varphi_{\sigma}(x_0) \rho_{\sigma}(t)^{-1}.\] Thus \(\rho_{\sigma}(t) \varphi_{\sigma}(x_{\sigma})^{-1} \varphi_{\sigma}(x_0) \rho_{\sigma}(t)^{-1}\) extends to \(X_{\sigma}\). Thus, taking $g_0 = \varphi_{\sigma}(x_0)$ and $g_{\sigma} = \varphi_{\sigma}(x_{\sigma})$, we obtain the 
	set theoretic description of \(\text{Mor}_T(\mathcal{E}, \mathcal{E'})\) as claimed in the statement of the proposition.

	Conversely, given \(g_{\sigma}\) for $\sigma \in \Xi^*$, and  \(g_0\) satisfying the conditions in the statement, we define a morphism \(\Psi: \mathcal{E} \to \mathcal{E}'\) by setting \(\Psi(e)=e' \cdot g_0\). By \(T\) and \(G\) equivariance, \(\Psi\) determines a morphism between $\mathcal{E}|_O$ and $\mathcal{E'}|_O$ as follows: \[\Psi(te \cdot g)=t \, \Psi(e) \cdot g=t e' \cdot g_0\,g, \text{ for all } t \in T, g \in G.\]
	Now for $\sigma \in \Xi^*$ and \(t \in T\), we have \[\Psi(te)=t \, \Psi(s_{\sigma}(x_0))=t s_{\sigma}'(x_0) \cdot g_0=s_{\sigma}'(tx_0) \cdot \rho_{\sigma}'(t)g_0.\] 
	On the other hand, we have \[\Psi(te)=\Psi(t s_{\sigma}(x_0))=\Psi(s_{\sigma}(tx_0)  \cdot \rho_{\sigma}(t))=\Psi(s_{\sigma}(tx_0))\cdot \rho_{\sigma}(t).\] 
	Comparing the right hand sides of the last two equations, we obtain 
	\begin{equation*}
	\Psi(s_{\sigma}(tx_0)) =s_{\sigma}'(tx_0) \cdot \rho_{\sigma}'(t)g_0 \rho_{\sigma}(t)^{-1}=s_{\sigma}'(tx_0) \cdot g_{\sigma} \rho_{\sigma}(t) g_{\sigma} ^{-1} \, g_0 \rho_{\sigma}(t)^{-1}\,.
	\end{equation*}
	Since \(s_{\sigma}\) and  \(s_{\sigma}'\)  are regular on \(X_{\sigma}\), and  \(\rho_{\sigma}(t) g_{\sigma} ^{-1}g_0 \rho_{\sigma}(t)^{-1}\)  extends to \(X_{\sigma}\) by hypothesis, \(\Psi\) extends to give a morphism between $\mathcal{E}|_{X_{\sigma}}$ and $\mathcal{E'}|_{X_{\sigma}}$. These morphisms glue to produce a morphism \(\Psi: \mathcal{E} \rightarrow \mathcal{E'}\).
\end{proof}

\section{Equivariant reduction of structure group}

Let $X$ be a complex toric variety. Let \(\phi : H \rightarrow G\) be homomorphism between complex linear algebraic groups \(H\) and \(G\). 

\begin{lemma}\label{asso}
	Let $p_H : \mathcal{E}_H \rightarrow X$ be a $T$-equivariant principal \(H\)-bundle. Let \(\{s_{\sigma}\}_{\sigma \in \Xi^*}\) be a collection of distinguished sections for \(\mathcal{E}_H\) which associates an admissible collection \(\{\rho_{\sigma}, P(\tau, \sigma)\}\) to \(\mathcal{E}_H\). Then  the associated bundle \(\mathcal{E}_H \times_H  G\) is a \(T\)-equivariant principal \(G\)-bundle. An admissible collection associated to \(\mathcal{E}_H \times_H  G\) is given by \(\{\phi \circ \rho_{\sigma}, \phi(P(\tau, \sigma))\}\).  
\end{lemma}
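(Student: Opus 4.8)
The plan is to produce distinguished sections of the associated bundle directly from the given ones for $\mathcal{E}_H$, and then read off the admissible collection via the construction recalled in Section \ref{prelim}.

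First I would make the associated bundle concrete. Recall that $\mathcal{E}_G := \mathcal{E}_H \times_H G$ is the quotient $(\mathcal{E}_H \times G)/H$, where $h \in H$ acts by $h\cdot(e,g) = (eh^{-1},\,\phi(h)g)$; writing $[e,g]$ for the class of $(e,g)$, this gives the relation $[eh,g] = [e,\phi(h)g]$. Since $G$ is affine, this quotient exists as a variety; the right $G$-action $[e,g]\cdot g' = [e,gg']$ is free and transitive on the fibers of the projection $[e,g]\mapsto p_H(e)$, and the $T$-action $t\cdot[e,g] = [te,g]$ is well defined (the $T$- and $H$-actions on $\mathcal{E}_H$ commute) and covers the $T$-action on $X$. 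Hence $\mathcal{E}_G$ is a $T$-equivariant principal $G$-bundle, and since $\mathcal{E}_H|_{X_{\sigma}}$ is trivial by Lemma \ref{lem:cone1}, so is $\mathcal{E}_G|_{X_{\sigma}}$.

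Next, for each $\sigma\in\Xi^{*}$, set $\overline{s}_{\sigma}(x) := [s_{\sigma}(x),\,1_G]$; I would check this is a distinguished section of $\mathcal{E}_G$ over $X_{\sigma}$ with associated homomorphism $\phi\circ\rho_{\sigma}$. Since $s_{\sigma}$ is distinguished, $t\,s_{\sigma}(x) = s_{\sigma}(tx)\cdot\rho_{\sigma}(t)$, so $t\,\overline{s}_{\sigma}(x) = [\,s_{\sigma}(tx)\rho_{\sigma}(t),\,1_G\,] = [\,s_{\sigma}(tx),\,\phi(\rho_{\sigma}(t))\,] = \overline{s}_{\sigma}(tx)\cdot\phi(\rho_{\sigma}(t))$ by the relation above. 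Thus the local action function of $\overline{s}_{\sigma}$ is $\phi\circ\rho_{\sigma}$, which is independent of $x$ and factors through $\pi_{\sigma}$ because $\rho_{\sigma}$ does; hence $\overline{s}_{\sigma}$ is distinguished.

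Finally I would compute the transition data: from $s_{\sigma}(x) = s_{\tau}(x)\cdot\phi_{\tau\sigma}(x)$ on $X_{\sigma}\cap X_{\tau}$ one gets $\overline{s}_{\sigma}(x) = [\,s_{\tau}(x)\phi_{\tau\sigma}(x),\,1_G\,] = \overline{s}_{\tau}(x)\cdot\phi(\phi_{\tau\sigma}(x))$, so the transition function of $\mathcal{E}_G$ relative to $\{\overline{s}_{\sigma}\}$ is $\phi\circ\phi_{\tau\sigma}$; evaluating at $x_0$ yields $\phi(\phi_{\tau\sigma}(x_0)) = \phi(P(\tau,\sigma))$. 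By the recipe of Section \ref{prelim}, $\{\phi\circ\rho_{\sigma},\,\phi(P(\tau,\sigma))\}$ is then the admissible collection associated to $\mathcal{E}_G$ via $\{\overline{s}_{\sigma}\}$, which completes the proof. I do not anticipate a genuine obstacle: the only points that require care are fixing the sign convention for the $H$-action on $\mathcal{E}_H\times G$ so that $[eh,g]=[e,\phi(h)g]$, and citing the standard fact that the associated-bundle quotient is a variety; the rest is an unwinding of the definitions.
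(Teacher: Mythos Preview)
Your proposal is correct and follows essentially the same route as the paper: define $\overline{s}_{\sigma}(x)=[s_{\sigma}(x),1_G]$, verify it is distinguished with associated homomorphism $\phi\circ\rho_{\sigma}$ via $t\,\overline{s}_{\sigma}(x)=\overline{s}_{\sigma}(tx)\cdot\phi(\rho_{\sigma}(t))$, and read off the transition data $\phi\circ\phi_{\tau\sigma}$ (hence $\phi(P(\tau,\sigma))$ at $x_0$). The paper additionally writes out the induced local trivialization $X_{\sigma}\times G$ explicitly, but this is cosmetic; your argument and the paper's are the same in substance.
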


\begin{proof}
	The associated principal \(G\)-bundle \(\mathcal{E}_H \times_H  G\) is constructed as the quotient \[\left( \mathcal{E}_H \times  G \right)/ \sim \] where \[(e,g) \sim (eh,\phi(h)^{-1}g) \text{ for } h \in H.\] The projection map \[\pi: \mathcal{E}_H \times_H  G \rightarrow X  \] is given by \[\pi([e,g])=p_H(e).\] Note that, for any \(\sigma \in \Xi^*\)  \[\pi^{-1}(X_{\sigma})=p_H^{-1}(X_{\sigma})\times_H G.\]
	  Recall from \eqref{triv} that, for each $\sigma \in \Xi^*$, we have the following trivialization data for \(\mathcal{E}_H\)
	\begin{equation*}
	\begin{split}
	\psi_{\sigma}: \mathcal{E}_H|_{X_{\sigma}} & \rightarrow X_{\sigma} \times H,\\
	s_{\sigma}(x)h & \mapsto (x, h), \text{ where } x \in X_{\sigma}, \ h \in H.
	\end{split}
	\end{equation*}	
	This induces a trivialization for the associated bundle \(\mathcal{E}:=\mathcal{E}_H \times_H  G\) as follows:
	\begin{equation}\label{4}
	\begin{split}
	\pi^{-1}(X_{\sigma})=\mathcal{E}_H|_{X_{\sigma}}\times_H G & \longrightarrow (X_{\sigma} \times H ) \times_H G  \longrightarrow X_{\sigma} \times G\\
	[(s_{\sigma}(x)h, g)] & \mapsto [(x, h), g]   \mapsto (x, \phi(h)g).
	\end{split}
	\end{equation}
	The induced action of the torus \(T\) on $\mathcal{E}$ is given by 
	\[t [(e,g)]=[(te, g)], \text{ where } t \in T,~ e \in \mathcal{E}_H,~ g \in G.\]
	Thus $\mathcal{E}$ becomes a \(T\)-equivariant principal \(G\)-bundle. Note that \begin{equation}\label{associated section}
	s'_{\sigma}(x)=[(s_{\sigma}(x), 1_G)]
	\end{equation}
	defines a section of $\mathcal{E}$ over \(X_\sigma\). We show that it is indeed a distinguished section. Consider the group homomorphism \[\rho_{\sigma}':T \rightarrow G\text{ defined by }\rho_{\sigma}'=\phi \circ \rho_{\sigma}.\] Then, it is easily verified that 
	\begin{equation*}
	t s'_{\sigma}(x)=s'_{\sigma}(tx) \cdot \rho_{\sigma}'(t), \text{ where } t \in T,~ x \in X_{\sigma}.
	\end{equation*}
	This shows that \(\{s'_{\sigma}\}_{\sigma \in \Xi^*}\) is a collection of distinguished sections of \(\mathcal{E}\). Then, \eqref{4} can be written as
	\begin{equation*}
	\begin{split}
	\mathcal{E}|_{X_\sigma}  \longrightarrow X_{\sigma} \times G, ~ s'_{\sigma}(x)g  \mapsto (x, g), 
	\end{split}
	\end{equation*}
	where the action of \(T\) on $X_{\sigma} \times G$ is given by \(t \cdot (x, g)=(tx, \rho_{\sigma}'(t)g)\). The transition functions for $\mathcal{E}$ are given by \[\phi_{\tau \sigma}'=\phi \circ \phi_{\tau \sigma}: X_{\sigma} \cap X_{\tau} \rightarrow G,\] where \[\phi_{\tau \sigma}:X_{\sigma} \cap X_{\tau} \rightarrow  H \] are the transition functions for \(\mathcal{E}_H\). It can be verified that the following relation holds:
	\begin{equation*}
	s_{\sigma}'(x)=s_{\tau}'(x) \cdot \phi_{\tau \sigma}'(x), \text{ where } x \in X_{\sigma}.
	\end{equation*}
	Thus, we have $$\phi_{\tau \sigma}'(x_0)=\phi (\phi_{\tau \sigma}(x_0))=\phi(P(\tau, \sigma)).$$ Hence, \(\{\phi \circ \rho_{\sigma}, \phi(P(\tau, \sigma))\}\) is a class of admissible collection associated to $\mathcal{E}$.
	\end{proof}

We say that a \(T\)-equivariant principal \(G\)-bundle has an equivariant reduction of structure group to a closed subgroup \(H\) of \(G\) if there exists a \(T\)-equivariant principal \(H\)-bundle \(\mathcal{E}_H\) such that \(\mathcal{E}_H \times_H G \) is isomorphic to \(\mathcal{E}_G\) as \(T\)-equivariant principal \(G\)-bundles.

\begin{prop}\label{reduction}
	Let $X(\Xi)$ be a toric variety and \(G\) be a linear algebraic group over \(\C\). Let \(H\) be a closed subgroup of the linear algebraic group \(G\) and let \(\iota:H \hookrightarrow G\) be the inclusion map. 	Let \(\mathcal{E}_G\) be an equivariant principal \(G\)-bundle on \(X\) with an associated class of admissible collection \(\{\rho_{\sigma}, P(\tau, \sigma)\}\). Then \(\mathcal{E}_G\) has an equivariant reduction of structure group to \(H\) if and only if for each $\sigma \in \Xi^*$, there exists $\alpha_{\sigma}, \beta_{\sigma} \in G$ such that the following hold:
	\begin{enumerate}
		\item $\alpha_{\sigma}^{-1} \rho_{\sigma}(t) \alpha_{\sigma} \in H$ for all \(t \in T\),
		\item for every pair \((\tau, \sigma)\) of cones in $\Xi^*$, $\beta_{\tau}^{-1} P(\tau, \sigma) \beta_{\sigma} \in H$, 
		\item $\rho_{\sigma}(t)\beta_{\sigma} \alpha_{\sigma}^{-1}\rho_{\sigma}(t)^{-1}$ extends to a \(G\)-valued function over \(X_{\sigma}\).
	\end{enumerate}
\end{prop}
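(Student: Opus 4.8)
The plan is to reduce the statement to the already-established classification (Theorem \ref{classifi}) together with the associated-bundle computation (Lemma \ref{asso}). The point is that an equivariant reduction of structure group to $H$ amounts to producing a $T$-equivariant principal $H$-bundle $\mathcal{E}_H$ whose extension $\mathcal{E}_H \times_H G$ is equivariantly isomorphic to $\mathcal{E}_G$, and both sides are governed by admissible collections: $\mathcal{E}_H$ by an $H$-valued admissible collection $\{\tilde\rho_{\sigma}, \tilde P(\tau,\sigma)\}$, and its extension by $\{\iota\circ\tilde\rho_{\sigma}, \iota(\tilde P(\tau,\sigma))\} = \{\tilde\rho_{\sigma}, \tilde P(\tau,\sigma)\}$ viewed in $G$ (by Lemma \ref{asso}). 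So the reduction exists if and only if the admissible collection $\{\rho_{\sigma}, P(\tau,\sigma)\}$ of $\mathcal{E}_G$ is equivalent, in the sense of Definition \ref{equiv_admissible}, to one all of whose data $\tilde\rho_{\sigma}(t)$ and $\tilde P(\tau,\sigma)$ lie in $H$.

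First I would prove the forward direction. Suppose $\mathcal{E}_H$ is an equivariant reduction; pick a collection of distinguished sections of $\mathcal{E}_H$ giving an admissible collection $\{\tilde\rho_{\sigma}, \tilde P(\tau,\sigma)\}$ with values in $H$. By Lemma \ref{asso}, the associated bundle $\mathcal{E}_H\times_H G$ carries the admissible collection $\{\tilde\rho_{\sigma}, \tilde P(\tau,\sigma)\}$ (now regarded in $G$ via $\iota$). Since $\mathcal{E}_H\times_H G \cong \mathcal{E}_G$ equivariantly, Proposition \ref{iso_admi} gives, for each $\sigma$, elements $\alpha_{\sigma},\beta_{\sigma}\in G$ with $\tilde\rho_{\sigma} = \alpha_{\sigma}^{-1}\rho_{\sigma}\alpha_{\sigma}$, $\tilde P(\tau,\sigma) = \beta_{\tau}^{-1}P(\tau,\sigma)\beta_{\sigma}$, and the extension condition that $\rho_{\sigma}(t)\beta_{\sigma}\alpha_{\sigma}^{-1}\rho_{\sigma}(t)^{-1}$ extends over $X_{\sigma}$. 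Because $\tilde\rho_{\sigma}$ and $\tilde P(\tau,\sigma)$ take values in $H$, these are precisely conditions (1), (2), (3) of the proposition.

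For the converse, assume $\alpha_{\sigma},\beta_{\sigma}\in G$ satisfying (1)--(3) are given. Define $\tilde\rho_{\sigma}(t) := \alpha_{\sigma}^{-1}\rho_{\sigma}(t)\alpha_{\sigma}$ and $\tilde P(\tau,\sigma) := \beta_{\tau}^{-1}P(\tau,\sigma)\beta_{\sigma}$; by (1) and (2) these are $H$-valued, and I would check that $\{\tilde\rho_{\sigma}, \tilde P(\tau,\sigma)\}$ is again an admissible collection — conditions (i), (iii), (iv) of Definition \ref{admissible} are immediate from the corresponding properties of $\{\rho_{\sigma}, P(\tau,\sigma)\}$ (factoring through $\pi_{\sigma}$ is preserved by conjugation, and $P(\sigma,\sigma)=1$ and the cocycle identity transport directly), while condition (ii), the extension of $t\mapsto \tilde\rho_{\tau}(t)\tilde P(\tau,\sigma)\tilde\rho_{\sigma}(t)^{-1}$ over $X_{\sigma}\cap X_{\tau}$, follows by rewriting it, exactly as in the proof of Theorem \ref{classifi} (the computation around \eqref{D5}), as $r_{\tau}(t x_0)^{-1}\phi_{\tau\sigma}(t x_0)r_{\sigma}(t x_0)$ where $r_{\sigma}(t x_0) = \rho_{\sigma}(t)\beta_{\sigma}\alpha_{\sigma}^{-1}\rho_{\sigma}(t)^{-1}\alpha_{\sigma}$ extends over $X_{\sigma}$ by (3); one must verify this extension is $H$-valued, which holds because its restriction to the dense orbit, being $\tilde\rho_{\tau}\tilde P\tilde\rho_{\sigma}^{-1}$, lands in $H$ and $H$ is closed. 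Then let $\mathcal{E}_H := E(\{\tilde\rho_{\sigma}, \tilde P(\tau,\sigma)\})$ be the $T$-equivariant principal $H$-bundle built as in Theorem \ref{classifi}; by Lemma \ref{asso} its extension to $G$ has admissible collection $\{\tilde\rho_{\sigma}, \tilde P(\tau,\sigma)\}$, which by construction is equivalent (via the same $\alpha_{\sigma},\beta_{\sigma}$, with condition (c) of Definition \ref{equiv_admissible} being exactly (3)) to $\{\rho_{\sigma}, P(\tau,\sigma)\}$; hence by Theorem \ref{classifi} the extension is equivariantly isomorphic to $\mathcal{E}_G$, giving the reduction.

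The main obstacle I anticipate is the bookkeeping in the converse: one has to be careful that conditions (1)--(3) really do assemble into a bona fide admissible collection valued in $H$ (as opposed to merely a $G$-valued collection equivalent to the original one), and in particular that the extension $\phi_{\tau\sigma}$ of condition (ii) for the new collection is $H$-valued rather than just $G$-valued. This is where closedness of $H$ in $G$ and density of the orbit $O$ in $X_{\sigma}\cap X_{\tau}$ are used. Everything else is a transcription of the conjugation relations of Proposition \ref{iso_admi} and Definition \ref{equiv_admissible} into the language of the subgroup $H$.
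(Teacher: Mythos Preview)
Your proposal is correct and follows essentially the same route as the paper: both directions hinge on Lemma~\ref{asso} (admissible collection of the extended bundle) together with the classification Theorem~\ref{classifi}/Proposition~\ref{iso_admi}, and in the converse you perform the same factorisation of $\tilde\rho_{\tau}(t)\tilde P(\tau,\sigma)\tilde\rho_{\sigma}(t)^{-1}$ into three pieces that extend by (3) and admissibility of the original collection. You are in fact slightly more careful than the paper on one point: you note that the extension $\phi_{\tau\sigma}$ for the new collection must be $H$-valued (not merely $G$-valued) in order to build an $H$-bundle, and you supply the missing one-line argument (density of $O$ plus closedness of $H$); the paper's proof only asserts a $G$-valued extension and then proceeds directly to the $H$-bundle.
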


\begin{proof}
	Assume that \(\mathcal{E}_G\) admits an equivariant reduction of structure group to \(H\), denoted by \(\mathcal{E}_H\). Let \(\{\rho'_{\sigma}, P'(\tau, \sigma)\}\) be a class of admissible collection associated to the \(T\)-equivariant principal \(H\)-bundle \(\mathcal{E}_H\). By Lemma \ref{asso}, an admissible collection associated to \(\mathcal{E}_H \times_H  G\) is given by \(\{\iota \circ \rho'_{\sigma},~ \iota(P'(\tau, \sigma))\}\). Since \(\mathcal{E}_H \times_H G \cong \mathcal{E}_G\) as \(T\)-equivariant principal \(G\)-bundles, the admissible collection associated to them are equivalent. Hence, by Theorem \ref{classifi}, for each $\sigma \in \Xi^*$, we get $\alpha_{\sigma}, \, \beta_{\sigma} \in G$ such that the conditions $(1)-(3)$ hold. This completes the proof of the forward direction.
	
	Conversely, suppose that for each $\sigma \in \Xi^*$, there exist $\alpha_{\sigma}, \beta_{\sigma} \in G$ such that the conditions $(1)-(3)$ hold. Define a collection of homomorphisms $\rho_{\sigma}':T \rightarrow H$ by setting
	\begin{equation}\label{redeq1}
	\rho_{\sigma}'(t)=\alpha_{\sigma}^{-1} \rho_{\sigma}(t) \alpha_{\sigma} \text{ for  all } t \in T.
	\end{equation}
	Consider the elements \[P'(\tau, \sigma)=\beta_{\tau}^{-1} P(\tau, \sigma) \beta_{\sigma} \in H.\] We show that \(\{\rho'_{\sigma}, P'(\tau, \sigma)\}\) satisfies the conditions of Definition \ref{admissible}. We see that conditions \((i), \ (iii), \text{ and } (iv)\) of Definition \ref{admissible} are immediate. Let us verify condition \((ii)\) of Definition \ref{admissible}. We have that 
	\begin{equation*}
	\begin{split}
	&\rho_{\tau }'(t) P'(\tau, \sigma) \rho_{\sigma}'(t)^{-1}\\
	&= \alpha_{\tau}^{-1} \rho_{\tau }(t) \alpha_{\tau} \beta_{\tau}^{-1} P(\tau, \sigma) \beta_{\sigma} \alpha_{\sigma}^{-1} \rho_{\sigma}(t)^{-1} \alpha_{\sigma}\\
	&= \alpha_{\tau}^{-1} [\left( \rho_{\tau }(t) \alpha_{\tau} \beta_{\tau}^{-1} \rho_{\tau }(t)^{-1}\right) \left( \rho_{\tau }(t) P(\tau, \sigma)  \rho_{\sigma}(t)^{-1}\right) \left( \rho_{\sigma}(t) \beta_{\sigma} \alpha_{\sigma}^{-1} \rho_{\sigma}(t)^{-1}\right)   ] \alpha_{\sigma}.
	\end{split}
	\end{equation*}
	By condition (3) of the hypothesis, we see that \(\rho_{\tau }'(t) P'(\tau, \sigma) \rho_{\sigma}'(t)^{-1}\) extends to a \(G\)-valued function over \(X_{\sigma} \cap X_{\tau}\). Hence the collection \(\{\rho'_{\sigma}, P'(\tau, \sigma)\}\) satisfies all conditions of Definition \ref{admissible}. Thus the admissible collection \(\{\rho'_{\sigma}, P'(\tau, \sigma)\}\) corresponds to an equivariant principal \(H\)-bundle, say \(\mathcal{E}_H\).  By Lemma \ref{asso}, we have an admissible collection associated to the equivariant principal \(G\)-bundle \(\mathcal{E}_H \times_H G \). By construction, this admissible collection is equivalent to the admissible collection \(\{\rho_{\sigma}, P(\tau, \sigma)\}\). Hence, by Theorem \ref{classifi}, the \(T\)-equivariant principal \(G\)-bundles \(\mathcal{E}_H \times_H G \) and \(\mathcal{E}_G \) are isomorphic.	
\end{proof}

\noindent
As an immediate corollary, we have the following:
\begin{cor}\label{6} 
	Let \(\mathcal{E}_G\) be a \(T\)-equivariant principal \(G\)-bundle. Then \(\mathcal{E}_G\) has an equivariant reduction of structure group to \(H\) if and only if there is an admissible collection \(\{\rho_{\sigma}, P(\tau, \sigma)\}\) associated to \(\mathcal{E}_G\) with $\text{Im}(\rho_{\sigma}) \subseteq H$ and \(P(\tau, \sigma) \in H \) for all 
	$\sigma, \, \tau  $ in $\Xi^{\ast}$. 	
\end{cor}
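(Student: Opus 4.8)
The plan is to derive this as a direct consequence of Proposition \ref{reduction}. The key observation is that the three conditions in Proposition \ref{reduction} can be collapsed when we are free to replace the admissible collection by an equivalent one.

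\medskip

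First I would prove the ``if'' direction. Suppose there is an admissible collection $\{\rho_{\sigma}, P(\tau, \sigma)\}$ associated to $\mathcal{E}_G$ with $\mathrm{Im}(\rho_{\sigma}) \subseteq H$ and $P(\tau, \sigma) \in H$ for all $\sigma, \tau \in \Xi^{\ast}$. Then I simply apply Proposition \ref{reduction} with $\alpha_{\sigma} = \beta_{\sigma} = 1_G$ for every $\sigma$. Condition (1) holds because $\alpha_{\sigma}^{-1}\rho_{\sigma}(t)\alpha_{\sigma} = \rho_{\sigma}(t) \in H$; condition (2) holds because $\beta_{\tau}^{-1}P(\tau,\sigma)\beta_{\sigma} = P(\tau,\sigma) \in H$; and condition (3) is immediate since $\rho_{\sigma}(t)\beta_{\sigma}\alpha_{\sigma}^{-1}\rho_{\sigma}(t)^{-1} = 1_G$, which trivially extends to a $G$-valued (indeed constant) function on $X_{\sigma}$. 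Hence $\mathcal{E}_G$ admits an equivariant reduction of structure group to $H$.

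\medskip

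For the ``only if'' direction, suppose $\mathcal{E}_G$ admits an equivariant reduction to $H$, and fix any admissible collection $\{\rho_{\sigma}, P(\tau, \sigma)\}$ associated to $\mathcal{E}_G$. By Proposition \ref{reduction}, there exist $\alpha_{\sigma}, \beta_{\sigma} \in G$ satisfying conditions (1)--(3). Now define a new collection by setting $\rho'_{\sigma}(t) := \alpha_{\sigma}^{-1}\rho_{\sigma}(t)\alpha_{\sigma}$ and $P'(\tau, \sigma) := \beta_{\tau}^{-1}P(\tau, \sigma)\beta_{\sigma}$. By conditions (1) and (2) we have $\mathrm{Im}(\rho'_{\sigma}) \subseteq H$ and $P'(\tau,\sigma) \in H$. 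It remains to check that $\{\rho'_{\sigma}, P'(\tau,\sigma)\}$ is again an admissible collection associated to $\mathcal{E}_G$; but this is exactly what was verified inside the proof of the converse direction of Proposition \ref{reduction} (where the collection $\{\rho'_{\sigma}, P'(\tau,\sigma)\}$ was shown to satisfy Definition \ref{admissible} using condition (3), and then to be equivalent to $\{\rho_{\sigma}, P(\tau,\sigma)\}$, so that it is associated to $\mathcal{E}_G$ by Theorem \ref{classifi}). This produces the desired admissible collection with structure constants in $H$.

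\medskip

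I do not anticipate any serious obstacle here: the corollary is essentially a repackaging of Proposition \ref{reduction} in which the conjugating elements $\alpha_{\sigma}, \beta_{\sigma}$ are absorbed into a choice of a different, but equivalent, admissible collection. The only point requiring a little care is to note explicitly that $\{\rho'_{\sigma}, P'(\tau,\sigma)\}$ genuinely \emph{is} an admissible collection associated to $\mathcal{E}_G$ (not merely abstract data with values in $H$), which is guaranteed by the argument already present in the proof of Proposition \ref{reduction}; alternatively one can invoke Theorem \ref{classifi} directly, since $\{\rho'_{\sigma}, P'(\tau,\sigma)\}$ is equivalent to $\{\rho_{\sigma}, P(\tau,\sigma)\}$ via $(\alpha_{\sigma}, \beta_{\sigma})$ in the sense of Definition \ref{equiv_admissible}.
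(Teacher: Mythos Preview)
Your proposal is correct and follows the same approach implicit in the paper, which states the corollary as an immediate consequence of Proposition~\ref{reduction} without further argument. Your write-up simply unpacks the two directions by choosing $\alpha_\sigma=\beta_\sigma=1_G$ for the ``if'' part and absorbing the conjugators $\alpha_\sigma,\beta_\sigma$ into an equivalent admissible collection for the ``only if'' part; the only minor remark is that the ``only if'' direction can also be obtained slightly more directly from Lemma~\ref{asso} (an admissible collection for $\mathcal{E}_H$ pushes forward via $\iota$ to one for $\mathcal{E}_G$ with values in $H$), bypassing Proposition~\ref{reduction} entirely.
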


\subsection{Equivariant reduction of structure group to a Levi subgroup}
Let \(G\) be a connected reductive linear algebraic group. A Levi subgroup of $G$ is the centralizer in $G$ of some torus in $G$. In particular, Levi subgroups are centralizers of their central torus, i.e. for a Levi subgroup $H$ of $G$, we have $C_G(Z^0(H))=H,$
where $C_G(Z^0(H)) $ denotes the centralizer of $Z^0(H)$ in $G$ (see \cite[Proposition 11.23]{Borel}). Here \(Z^0(H)\) denotes the connected component of the center \(Z(H)\) of \(H\) that contains the identity element.  

Now we give a criterion of a \(T\)-equivariant principal bundle over a toric variety to admit an equivariant reduction of structure group to a Levi subgroup (cf. \cite[Proposition 1.2]{BP_Levi}). 

\begin{prop}\label{levi_reduction} 
Let \(G\) be a connected reductive linear algebraic group. Let $\mathcal{E}_G$ be a \(T\)-equivariant principal \(G\)-bundle over \(X\). Then $\mathcal{E}_G$ has an equivariant reduction of structure group to a Levi subgroup \(H\) of \(G\) if and only if \[Z^0(H) \subseteq 
	\text{Aut}_T(\mathcal{E}_G).\]
	
\end{prop}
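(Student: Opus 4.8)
The plan is to prove both directions using the characterization of $\text{Aut}_T(\mathcal{E}_G)$ from Theorem \ref{Aut} together with the reduction criterion of Proposition \ref{reduction} (or, more conveniently, its Corollary \ref{6}), plus the defining property $C_G(Z^0(H)) = H$ of a Levi subgroup. Throughout, fix $e \in (\mathcal{E}_G)_{x_0}$ and a collection of distinguished sections $\{s_\sigma\}$ with $s_\sigma(x_0) = e$, giving homomorphisms $\rho_\sigma : T \to G$ and an admissible collection $\{\rho_\sigma, P(\tau,\sigma)\}$; by the argument in the proof of Lemma \ref{7}, here the transition functions satisfy $P(\tau,\sigma) = \phi_{\tau\sigma}(x_0) = 1_G$ (since all $s_\sigma$ agree at $x_0$). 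So we may take the associated admissible collection to have $P(\tau,\sigma) = 1_G$ for all $\tau,\sigma$.

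\emph{Forward direction.} Suppose $\mathcal{E}_G$ admits an equivariant reduction to the Levi subgroup $H$. By Corollary \ref{6}, there is an admissible collection $\{\rho_\sigma', P'(\tau,\sigma)\}$ associated to $\mathcal{E}_G$ with $\text{Im}(\rho_\sigma') \subseteq H$ and $P'(\tau,\sigma) \in H$ for all $\sigma,\tau$. Since $Z^0(H)$ centralizes $H$ pointwise, for every $g \in Z^0(H)$ and every $\sigma$ we have $\rho_\sigma'(t)\, g\, \rho_\sigma'(t)^{-1} = g$ for all $t \in T$, which trivially extends to a $G$-valued (indeed constant) function over $X_\sigma$. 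By Proposition \ref{automorphism} — whose description of $\text{Aut}_T(\mathcal{E}_G)$ is independent of the choice of distinguished sections, as remarked after its proof — this says exactly that $g \in \text{Aut}_T(\mathcal{E}_G)$. Hence $Z^0(H) \subseteq \text{Aut}_T(\mathcal{E}_G)$.

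\emph{Converse.} Assume $Z^0(H) \subseteq \text{Aut}_T(\mathcal{E}_G) = \bigcap_{\alpha \in \Xi(1)} P^\alpha$, where $P^\alpha = P(\rho_\sigma \circ \lambda^{v_\alpha})$ for any $\sigma$ containing $\alpha$ (Definition \ref{Palpha}, Theorem \ref{Aut}). The goal is to produce an admissible collection for $\mathcal{E}_G$ landing in $H$, then invoke Corollary \ref{6}. Fix $\alpha \in \sigma(1)$ and set $\mu := \rho_\sigma \circ \lambda^{v_\alpha}$, a $1$-psg of $G$. Since $Z^0(H) \subseteq P(\mu)$, every $z \in Z^0(H)$ satisfies $\lim_{s\to 0}\mu(s) z \mu(s)^{-1}$ exists; but $\mu(s) z \mu(s)^{-1}$ is, for each $z$, a $1$-psg-conjugated family, and taking the limit over the whole torus $Z^0(H)$ one wants to deduce that $\mu$ actually \emph{centralizes} $Z^0(H)$, i.e. $\text{Im}(\mu) \subseteq C_G(Z^0(H)) = H$. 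The key point is: if a $1$-psg $\mu$ of a reductive group $G$ lies in $P(z)$ for all $z$ in a subtorus $S$ — equivalently $S \subseteq P(\mu)$ — and moreover $S \subseteq P(\mu) \cap P(\mu^{-1})$, then $\text{Im}(\mu) \subseteq C_G(S)$; one upgrades "$S \subseteq P(\mu)$" to "$S \subseteq L(\mu)$", the Levi factor of $P(\mu)$, because $S$ is a torus (hence contained in a maximal torus lying in $L(\mu)$, and a torus in $P(\mu)$ is conjugate into $L(\mu)$; being central-type it is forced into $L(\mu)$ itself), and then $\mu$ commutes with $L(\mu)$ hence with $S$. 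Granting this, $\text{Im}(\rho_\sigma \circ \lambda^{v_\alpha}) \subseteq H$ for every ray $\alpha$ of $\sigma$. Since $\rho_\sigma$ factors through $\pi_\sigma : T \to T_\sigma$ (Definition \ref{admissible}(i)) and $T_\sigma$ has cocharacter lattice $N_\sigma$ spanned by $\sigma \cap N$, which is generated by the $v_\alpha$, $\alpha \in \sigma(1)$, the image $\text{Im}(\rho_\sigma)$ is generated by the images of the $\rho_\sigma \circ \lambda^{v_\alpha}$, whence $\text{Im}(\rho_\sigma) \subseteq H$ for all $\sigma \in \Xi^*$. Together with $P(\tau,\sigma) = 1_G \in H$, Corollary \ref{6} yields the desired equivariant reduction of structure group to $H$.

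\emph{Main obstacle.} The delicate step is the group-theoretic claim in the converse: from "$Z^0(H)$ is contained in the parabolic $P(\rho_\sigma\circ\lambda^{v_\alpha})$" one must extract that the $1$-psg $\rho_\sigma\circ\lambda^{v_\alpha}$ has image in the Levi $C_G(Z^0(H)) = H$, not merely in some parabolic. The point to be careful about is that a priori $Z^0(H) \subseteq P(\mu)$ only; one needs to also know $Z^0(H) \subseteq P(\mu^{-1})$ — which should follow because $\lambda^{-v_\alpha}$ is again a cocharacter of $T$, but $-v_\alpha$ need not lie in the fan, so one cannot directly cite Theorem \ref{Aut} for it. Instead, one argues that since $Z^0(H)$ is a torus sitting inside the parabolic $P(\mu)$, it is conjugate (within $P(\mu)$) into a Levi factor $L(\mu)$; because $Z^0(H)$ is already central in $H$ and $H$ is reductive, a dimension/conjugacy count forces $Z^0(H) \subseteq L(\mu)$ after no conjugation is needed, and then $\mu$, which centralizes $L(\mu)$, centralizes $Z^0(H)$, giving $\text{Im}(\mu) \subseteq C_G(Z^0(H)) = H$. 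Making this last argument airtight — i.e. that a central torus of a Levi inside a parabolic of $G$ must actually lie in the chosen Levi factor of that parabolic — is the crux; everything else is a bookkeeping application of the already-established classification and automorphism results.
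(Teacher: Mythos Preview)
Your forward direction is fine and essentially agrees with the paper's argument (the paper states it more directly: for $h \in Z(H)$ the right-translation $z \mapsto zh$ on $\mathcal{E}_H$ is a $T$-equivariant automorphism, hence $Z^0(H) \subseteq \text{Aut}_T(\mathcal{E}_H) \subseteq \text{Aut}_T(\mathcal{E}_G)$).

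The converse, however, has a genuine gap precisely where you flag it. Your strategy is to show that the \emph{given} homomorphisms $\rho_\sigma$ already have image in $H$, by arguing ray-by-ray that $Z^0(H) \subseteq P(\mu)$ forces $Z^0(H) \subseteq L(\mu)$ and hence $\text{Im}(\mu) \subseteq C_G(Z^0(H)) = H$, where $\mu = \rho_\sigma \circ \lambda^{v_\alpha}$. But the implication ``torus in $P(\mu)$ $\Rightarrow$ torus in $L(\mu)$'' is simply false, and your appeal to ``a dimension/conjugacy count'' does not repair it. Concretely, take $X = \mathbb{A}^1$ (one maximal cone, one ray), $G = \mathrm{SL}_2$, $\rho_\sigma(t) = \mathrm{diag}(t, t^{-1})$, so $P^\alpha$ is the upper-triangular Borel and $\text{Aut}_T(\mathcal{E}_G) = P^\alpha$. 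Let $H = u D u^{-1}$ with $u = \begin{psmallmatrix} 1 & 1 \\ 0 & 1 \end{psmallmatrix}$ and $D$ the diagonal torus; then $Z^0(H) = H \subset P^\alpha$, yet $\text{Im}(\rho_\sigma) = D \not\subseteq H$. Your conclusion fails, and the admissible collection $\{\rho_\sigma, 1_G\}$ does \emph{not} land in $H$. Since $-v_\alpha$ is not a ray of the fan you cannot invoke $P(\mu^{-1})$ either, exactly as you feared.

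The paper's proof avoids this by never claiming $\text{Im}(\rho_\sigma) \subseteq H$. Instead it works cone-by-cone with the group $P_\sigma := \{g \in G : \rho_\sigma(t) g \rho_\sigma(t)^{-1} \text{ extends to } X_\sigma\} = \bigcap_{\alpha \in \sigma(1)} P^\alpha$, observes that a maximal torus $K_\sigma \supseteq \text{Im}(\rho_\sigma)$ and the torus $Z^0(H)$ both lie in the connected algebraic group $P_\sigma^0$, chooses a maximal torus $K_\sigma' \supseteq Z^0(H)$ of $P_\sigma^0$, and uses conjugacy of maximal tori inside $P_\sigma^0$ to find $\alpha_\sigma \in P_\sigma^0$ with $\alpha_\sigma^{-1} K_\sigma \alpha_\sigma = K_\sigma'$. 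Then $\alpha_\sigma^{-1} \rho_\sigma(t) \alpha_\sigma \in K_\sigma' \subseteq C_G(Z^0(H)) = H$, and since $\alpha_\sigma \in P_\sigma$ condition (3) of Proposition~\ref{reduction} holds with $\beta_\sigma = 1_G$. In the example above this produces $\alpha_\sigma = u^{-1}$, which genuinely conjugates $D$ onto $H$. The essential idea you are missing is that one must allow a (non-trivial) conjugation by an element of $P_\sigma$, and the device that supplies it is conjugacy of maximal tori in $P_\sigma^0$ rather than any statement about a single parabolic $P^\alpha$.
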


\begin{proof}
	Let \(\mathcal{E}_G\) admit an equivariant reduction of structure group to \(H\), say \(\mathcal{E}_H\). This implies that \( \text{Aut}_T(\mathcal{E}_H) \subseteq \text{Aut}_T(\mathcal{E}_G).\) For an element \(h\) in \(Z(H)\), define the map \[\mathcal{E}_H \rightarrow  \mathcal{E}_H , \text{ given by } z \mapsto zh.\] This is clearly a \(T\)-equivariant automorphism of \(\mathcal{E}_H\). Hence,
	we have \[Z^0(H) \subseteq Z(H) \subseteq \text{Aut}_T(\mathcal{E}_H). \] Thus, we obtain \(Z^0(H) \subseteq \text{Aut}_T(\mathcal{E}_G).\)
	
	To see the reverse direction, fix \(e \in (\mathcal{E}_G)_{x_0}\). As in the proof of Theorem \ref{Aut}, for each $\sigma \in \Xi^*$, we choose a distinguished section \(s_{\sigma}\) of \(\mathcal{E}_G\) over \(X_{\sigma}\) such that \(s_{\sigma}(x_0)=e\). Let $\rho_{\sigma} : T \rightarrow G$ denote the corresponding homomorphism. Note that an admissible collection associated to \(\mathcal{E}_G \) is given by \(\{\rho_{\sigma}, P(\tau, \sigma)\}\), where \(P(\tau, \sigma)=1_G\) for all $\tau, \sigma \in \Xi^*$. By Proposition \ref{reduction}, we need to find $\alpha_{\sigma}, \beta_{\sigma} \in G$ for all $\sigma \in \Xi^*$ such that the conditions \((1)-(3)\) are satisfied. Choose $$\beta_{\sigma}=1_G \text{ for all } \sigma \in \Xi^*,$$ then condition \((2)\) is satisfied. Fix $\sigma \in \Xi^*$. Let \(K_{\sigma}\) be a maximal torus of \(G\) such that 
	\begin{equation}\label{torus}
	\text{Im}(\rho_{\sigma})\subseteq K_{\sigma}.
	\end{equation}
\noindent	
  Define  \[P_{\sigma}:=\{g \in G ~ |~ \rho_{\sigma}(t) g \rho_{\sigma}(t)^{-1} \text{ extends to $X_{\sigma}$ }\} \,. \]	
		By Proposition \ref{automorphism}, we have \(P_{\sigma}=\text{Aut}_T({\mathcal{E}_G|_{X_{\sigma}}})\). Note that the action of \(T\) on \(\mathcal{E}_G|_{X_{\sigma}}\) induces an action of \(T\) on \(\text{Aut}({\mathcal{E}_G|_{X_{\sigma}}})\). We see that \(\text{Aut}_T({\mathcal{E}_G|_{X_{\sigma}}})\) is the subgroup that is fixed pointwise by the action of \(T\) on \(\text{Aut}({\mathcal{E}_G|_{X_{\sigma}}})\). Thus $P_{\sigma}$ is an algebraic group by \cite[Proposition (c), page 52]{Borel}. Let \( P^{0}_{\sigma}\) denote the connected component of 
		$P_{\sigma}$ containing the identity element.

	We claim that \(K_\sigma \subseteq P^{0}_{\sigma}\). 
	Let \(g \in K_{\sigma}\). By \eqref{torus}, $\rho_{\sigma}(t)$ and \(g\) commute for all \(t \in T\), which implies that \(g \in P_{\sigma}\).  Since \(K_{\sigma}\) is connected, the claim holds.

	Also, we have \[Z^0(H) \subseteq \text{Aut}_T(\mathcal{E}_G) \subseteq P_{\sigma},\] where the last inclusion follows from Proposition \ref{automorphism}. Thus \(Z^0(H) \subseteq P^0_{\sigma}\). Note that \(Z^0(H)\) is a torus of  \(P^0_{\sigma}\) by \cite[Proposition 11.23(i)]{Borel}. Let \(K'_{\sigma}\) be a maximal torus of \(P^0_{\sigma}\) containing \(Z^0(H)\). Then the maximal tori \(K_{\sigma}\) and \(K'_{\sigma}\) of the common connected algebraic group \(P^0_{\sigma}\) are conjugate by an element of \(P^0_{\sigma}\). Let $\alpha_{\sigma} \in P^0_{\sigma}$ be such that \[K'_{\sigma}=\alpha_{\sigma}^{-1} K_{\sigma} \alpha_{\sigma}.\] Therefore, we have $$\alpha_{\sigma}^{-1}\, \rho_{\sigma}(t) \, \alpha_{\sigma} \subseteq K'_{\sigma} \subseteq C_G(Z^0(H))=H.$$ 
Thus condition \((1)\) holds. Finally, condition \((3)\) is immediate, since $\alpha_{\sigma} \in P_{\sigma}$.
	\end{proof}

\begin{rmk}{\rm
		Let $\mathcal{E}_G$ be a $T$-equivariant principal \(G\)-bundle on a toric variety \(X=X(\Xi)\). Assume that \(G\) is reductive and recall the definition of \(P^{\alpha}\) from Definition \ref{Palpha}. Then we have seen in the proof of Proposition \ref{levi_reduction} that \( P_{\sigma} \) contains a maximal torus of \(G\) for all $\sigma \in \Xi^*$. Also, by Theorem \ref{Aut}, we have \( P_{\sigma} = \bigcap\limits_{\alpha \in \sigma(1)} P^{\alpha} \).
		
		Now consider the complex $\mathscr{P}(G)$ whose vertices are the	parabolic subgroups of \(G\) and whose simplices are of the form \(\{P^{\alpha} ~|~  \alpha \in A \text{ and $\bigcap\limits_{\alpha \in A}P^{\alpha}$  contains a common maximal torus of $G$}\} \) for some subset \(A\) of $\Xi(1)$.
		
		Thus, we have shown that an equivariant principal \(G\)-bundle $\mathcal{E}$ gives rise to simplicial maps from the fan $\Xi$ to the complex $\mathscr{P}(G)$ (cf. \cite[Section 6.1]{Kly}).	}
\end{rmk}

\subsection{Equivariant splitting of an equivariant principal bundle}

We say that a \(T\)-equivariant principal $G$-bundle $\mathcal{E}_G$ splits equivariantly if it admits an equivariant reduction of structure group to a maximal torus of \(G\). We give a criterion of equivariant splitting of an equivariant principal bundle using the Kaneyama type description given in Section \ref{Kan description}. As before, let us fix an embedding of \(G\) in \({\rm GL}(r, \C)\) such that \(K_0:=D(r, \C) \cap G\) is a maximal torus of $G$. 

\begin{prop}
	Let $X = X(\Xi)$ be a toric variety and \(\mathcal{E}_G\) be a $T$-equivariant principal \(G\)-bundle on \(X\). Then \(\mathcal{E}_G\)  splits equivariantly if and only if it has a Kaneyama data associated to it,  given by \((\xi, P)\) $($see Theorem \ref{K4}$)$, such that  \(P(\tau, \sigma)=1_G\) for all $\tau, \sigma \in \Xi^*$.
\end{prop}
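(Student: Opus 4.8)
\textit{Proof plan.} The strategy is to read the statement off from Corollary \ref{6} together with the translation, set up in Section \ref{Kan description}, between Kaneyama data $(\xi, P)$ and admissible collections $\{\rho_\sigma, P(\tau, \sigma)\}$ whose homomorphisms $\rho_\sigma$ take values in the diagonal maximal torus $K_0 = D(r, \C)\cap G$. Recall that $\mathcal{E}_G$ splits equivariantly precisely when it admits an equivariant reduction of structure group to \emph{some} maximal torus of $G$.

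For the ``if'' direction I would argue as follows. Assume $\mathcal{E}_G$ has a Kaneyama datum $(\xi, P)$ with $P(\tau, \sigma) = 1_G$ for all $\tau, \sigma$. By Theorem \ref{K4} and the construction preceding it, this datum comes from an admissible collection $\{\rho_\sigma, 1_G\}$ associated to $\mathcal{E}_G$ in which each $\rho_\sigma(t) = \xi^\sigma(t)$ is diagonal, so $\mathrm{Im}(\rho_\sigma) \subseteq K_0$. Since also $1_G \in K_0$, and $K_0$ is a maximal torus of $G$ by the chosen embedding, Corollary \ref{6} applied with $H = K_0$ produces an equivariant reduction of structure group of $\mathcal{E}_G$ to $K_0$; in particular $\mathcal{E}_G$ splits equivariantly.

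For the ``only if'' direction, suppose $\mathcal{E}_G$ admits an equivariant reduction to a maximal torus $K$ of $G$. By Corollary \ref{6} there is an admissible collection $\{\rho_\sigma, P(\tau, \sigma)\}$ associated to $\mathcal{E}_G$ with $\mathrm{Im}(\rho_\sigma) \subseteq K$ and $P(\tau, \sigma) \in K$. Fix $g \in G$ with $g K g^{-1} = K_0$, fix a base cone $\sigma_0 \in \Xi^*$, and set $\alpha_\sigma = \beta_\sigma := P(\sigma, \sigma_0)\, g^{-1}$ for every $\sigma \in \Xi^*$. Since $K$ is abelian and contains both $\rho_\sigma(t)$ and $P(\sigma, \sigma_0)$, one checks that $\rho'_\sigma(t) := \alpha_\sigma^{-1} \rho_\sigma(t) \alpha_\sigma = g\,\rho_\sigma(t)\, g^{-1} \in K_0$, while the cocycle condition of Definition \ref{admissible} for the triple $(\sigma_0, \tau, \sigma)$ gives $P'(\tau, \sigma) := \beta_\tau^{-1} P(\tau, \sigma) \beta_\sigma = g\, P(\sigma_0, \tau) P(\tau, \sigma) P(\sigma, \sigma_0)\, g^{-1} = 1_G$. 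Condition (c) of Definition \ref{equiv_admissible} holds trivially because $\beta_\sigma \alpha_\sigma^{-1} = 1_G$, so $\{\rho_\sigma, P(\tau, \sigma)\}$ and $\{\rho'_\sigma, 1_G\}$ are equivalent admissible collections, and by Theorem \ref{classifi} the latter is again associated to $\mathcal{E}_G$. In the model $E(\{\rho', 1_G\}) \cong \mathcal{E}_G$ the canonical sections $s_\sigma(x) = [(x, 1_G)]$ of \eqref{Rmk_sec} are distinguished with $\rho_{s_\sigma} = \rho'_\sigma$, and since $\mathrm{Im}(\rho'_\sigma) \subseteq K_0$ they are Kaneyama sections; hence $(\xi', P')$ with $\xi'^\sigma(t) = \rho'_\sigma(t)$ and $P'(\tau, \sigma) = 1_G$ is a Kaneyama datum associated to $\mathcal{E}_G$, which is what we want.

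The only mildly delicate point is the ``only if'' direction, where two bookkeeping moves are combined: invoking conjugacy of maximal tori to land inside the particular torus $K_0$ singled out by the embedding, and exploiting the cocycle identity to absorb all the $P(\tau, \sigma)$ into a single, uniform conjugation $\beta_\sigma = P(\sigma, \sigma_0)\,g^{-1}$ which is compatible---because $K$ is abelian---with keeping $\rho'_\sigma$ diagonal. Beyond these cocycle manipulations I do not expect any genuine obstacle: the rest is a direct appeal to Corollary \ref{6}, Theorem \ref{classifi} and Theorem \ref{K4}.
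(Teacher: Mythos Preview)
Your proof is correct and follows essentially the same approach as the paper: both directions hinge on Corollary \ref{6}, and for the ``only if'' direction both arguments set $\alpha_\sigma=\beta_\sigma$ so that condition (c) of Definition \ref{equiv_admissible} is automatic, then use abelianness of the torus together with the cocycle identity to kill the $P(\tau,\sigma)$. The only cosmetic difference is that the paper performs two successive equivalences (first conjugating everything by a fixed $g$ to land in $K_0$, then taking $\beta_\gamma=P(\gamma,\sigma_0)$ to trivialise the $P$'s), whereas you fold these into a single equivalence via $\alpha_\sigma=\beta_\sigma=P(\sigma,\sigma_0)\,g^{-1}$; this is a harmless streamlining of the same argument.
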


\begin{proof}
	Let \(\mathcal{E}_G\) split equivariantly, i.e., it admits an equivariant reduction of structure group to a maximal torus of \(G\), say \(K\). Then by Corollary \ref{6}, there is an admissible collection \(\{\widehat{\rho}_{\sigma}, \widehat{P}(\tau,\, \sigma)\}\) associated to \(\mathcal{E}_G\) with $\text{Im}(\widehat{\rho}_{\sigma}) \subseteq K$ and \(\widehat{P}(\tau, \sigma)  \subseteq K \). Since any two maximal torus are conjugate, there exists \(g \in G\) such that \(K_0=g^{-1} K g\). Now consider
	\begin{equation*}
	\alpha_{\sigma}=\beta_{\sigma}=g \text{ for all } \sigma \in \Xi^*.
	\end{equation*}
	Note that these elements satisfy the condition in Definition \ref{equiv_admissible} (c) and replace \(\{\widehat{\rho}_{\sigma}, \widehat{P}(\tau, \sigma)\}\) by an equivalent collection \(\{\rho_{\sigma}, P(\tau, \sigma)\}\) using these elements. Then we have $\text{Im}(\rho_{\sigma}) \subseteq K_0$ and \(P(\tau, \sigma)  \subseteq K_0 \).

	Fix a cone \(\sigma \in \Xi^*\). Choose the elements
	\begin{equation*}
	\begin{split}
	&\beta_{\sigma}=1_G, \text{ and }\\
	&\beta_{\gamma}=P(\gamma, \ \sigma) \text{ for all maximal cones } \gamma \neq \sigma,\\
	&\alpha_{\tau}=\beta_{\tau} \text{ for  all } \tau \in \Xi^*.	
	\end{split}
	\end{equation*}
	Then we have 
	\begin{equation*}
	P'(\gamma, \sigma)=\beta_{\gamma}^{-1} P(\gamma, \sigma) \beta_{\sigma}=1_G
	\end{equation*}
	for all maximal cones \(\gamma \neq \sigma\). Let \(\tau, \ \tau'\) be any two maximal cones. Then, by the cocycle condition, we have
	\begin{equation*}
	P'(\tau, \tau')= P'(\tau, \sigma)P'(\sigma, \tau')=1_G.
	\end{equation*} Again, for any \(\sigma \in \Xi^*\), we have  \[\rho'_{\sigma}=\alpha_{\sigma}^{-1} \rho_{\sigma} \alpha_{\sigma}= \rho_{\sigma}.\]
	Therefore,  we can replace the admissible collection \(\{\rho_{\sigma}, P(\tau, \sigma)\}\) by the equivalent admissible collection \(\{\rho_{\sigma}, P'(\tau, \sigma)\}\) where \(P'(\tau, \sigma)=1_G\) for all $\tau, \sigma \in \Xi^*$. Finally, as in the proof of Theorem \ref{K4}, \(\{\rho_{\sigma}, P'(\tau, \sigma)\}\) gives rise to a Kaneyama data \((\xi, P')\) with \(P'(\tau, \sigma)=1_G\) for all $\tau, \sigma \in \Xi^*$.

	The converse follows from Corollary \ref{6}.
	\end{proof}

\noindent
As an immediate corollary, we have 
\begin{cor}(cf \cite[Lemma 4.3]{BDP})
	If the maximal torus \(K_0\) is normal in \(G\), then any $T$-equivariant principal $G$-bundle over a toric variety $X = X(\Xi)$ admits an equivariant reduction of structure group to \(K_0\).
\end{cor}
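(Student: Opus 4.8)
The plan is to produce an admissible collection $\{\rho_{\sigma},\, P(\tau,\sigma)\}$ associated to $\mathcal{E}_G$ with $\text{Im}(\rho_{\sigma})\subseteq K_0$ for every $\sigma$ and $P(\tau,\sigma)=1_G$ for all $\tau,\sigma\in\Xi^*$, and then to conclude directly by Corollary \ref{6} applied with $H=K_0$.

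First I would start from \emph{any} admissible collection $\{\rho_{\sigma},\, P(\tau,\sigma)\}$ associated to $\mathcal{E}_G$, which exists by Theorem \ref{classifi}. Fix once and for all a maximal cone $\sigma_0\in\Xi^*$ and set
\[
\alpha_{\sigma}\;=\;\beta_{\sigma}\;:=\;P(\sigma,\sigma_0)\qquad\text{for every }\sigma\in\Xi^*,
\]
so that $\alpha_{\sigma_0}=\beta_{\sigma_0}=1_G$ by Definition \ref{admissible}(iii). The cocycle conditions of Definition \ref{admissible}(iv) give $P(\tau,\sigma)P(\sigma,\sigma_0)=P(\tau,\sigma_0)$, whence
\[
P'(\tau,\sigma)\;:=\;\beta_{\tau}^{-1}P(\tau,\sigma)\beta_{\sigma}\;=\;1_G\qquad\text{for all }\tau,\sigma\in\Xi^*.
\]
Since $\beta_{\sigma}=\alpha_{\sigma}$, the function $\rho_{\sigma}(t)\beta_{\sigma}\alpha_{\sigma}^{-1}\rho_{\sigma}(t)^{-1}$ is the constant $1_G$, so condition (c) of Definition \ref{equiv_admissible} holds automatically. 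Hence the collection $\{\rho'_{\sigma},\, P'(\tau,\sigma)\}$, with $\rho'_{\sigma}(t):=\beta_{\sigma}^{-1}\rho_{\sigma}(t)\beta_{\sigma}$, is an admissible collection equivalent to $\{\rho_{\sigma},\, P(\tau,\sigma)\}$, and therefore again associated to $\mathcal{E}_G$ by Theorem \ref{classifi}.

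It remains to check $\text{Im}(\rho'_{\sigma})\subseteq K_0$, and this is where the hypothesis enters. Each $\rho'_{\sigma}\colon T\to G$ is a homomorphism from a torus, so its image is a subtorus of $G$ and hence lies in some maximal torus of $G$; but all maximal tori of $G$ are conjugate, and since $K_0$ is normal each such conjugate coincides with $K_0$, so $\text{Im}(\rho'_{\sigma})\subseteq K_0$ for every $\sigma$. (Equivalently, $\text{Im}(\rho_{\sigma})\subseteq K_0$ for the same reason, and then $\text{Im}(\rho'_{\sigma})=\beta_{\sigma}^{-1}\,\text{Im}(\rho_{\sigma})\,\beta_{\sigma}\subseteq\beta_{\sigma}^{-1}K_0\beta_{\sigma}=K_0$ by normality.) Together with $P'(\tau,\sigma)=1_G\in K_0$, Corollary \ref{6} with $H=K_0$ then yields the required equivariant reduction of the structure group of $\mathcal{E}_G$ to $K_0$.

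There is no substantial obstacle here; the one point that needs care — and the precise role of the normality hypothesis — is that the elements $\beta_{\sigma}=P(\sigma,\sigma_0)$ used to trivialize the cocycle need not commute with $\text{Im}(\rho_{\sigma})$ (unlike in the preceding proposition, where the conjugating elements already lay in the abelian torus $K_0$), so a priori the conjugated homomorphisms $\rho'_{\sigma}$ could fail to have image in $K_0$; normality of $K_0$ is exactly what forces them back into $K_0$, and the argument genuinely uses it, since without it an arbitrary $\mathcal{E}_G$ need not split at all.
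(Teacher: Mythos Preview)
Your proof is correct and follows essentially the route the paper has in mind: you trivialize the cocycle by conjugating with $\beta_\sigma=\alpha_\sigma=P(\sigma,\sigma_0)$ exactly as in the proof of the preceding proposition, and then use normality of $K_0$ to see that the conjugated homomorphisms $\rho'_\sigma$ still land in $K_0$. The only cosmetic difference is that you argue directly that the image of each $\rho'_\sigma$, being a torus, must sit in the (unique, by normality) maximal torus $K_0$, instead of first invoking Kaneyama sections to arrange $\text{Im}(\rho_\sigma)\subseteq K_0$; this is a slight streamlining, not a different approach.
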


We remark that a $T$-equivariant principal $G$-bundle on a toric variety splits equivariantly if
$G$ is an abelian complex linear algebraic group. The proof is similar to \cite[Theorem 5.1]{DP}.

\begin{rmk}\label{choice of splitting}{\rm
		Let \(G\) be a connected algebraic group over \(\C\) and let \(\mathcal{E}_G\) be a \(T\)-equivariant principal \(G\)-bundle on a  toric variety \(X\). Let \(K\) be a maximal torus of \(G\) such that \(\mathcal{E}_G\) admits an equivariant reduction of structure group to \(K\), say \(\mathcal{E}_K\) such that \(\mathcal{E}_K \times_{K} G \cong \mathcal{E}_G\). Let \(\widetilde{K}\) be another maximal torus of \(G\). Since any two maximal tori of a connected algebraic group are conjugate, there exists an element \(g\) of \(G\) such that \(\widetilde{K}=g^{-1}Kg\). Then consider \(\mathcal{E}_K \times_{\widetilde{K}} G\), where \(\widetilde{K}\) acts on both \(\mathcal{E}_K \) and \(G\) via the inclusion \(g \widetilde{K} g^{-1}=K \subset G\). It is easy to see that \(\mathcal{E}_K \times_{\widetilde{K}} G \cong \mathcal{E}_K \times_{K} G \cong \mathcal{E}_G\). Hence, \(\mathcal{E}_K\) itself is a reduction of structure group for \(\widetilde{K} \subset G\).
	
}
\end{rmk}

Let \(\phi:G \rightarrow G'\) be an injective homomorphism of connected reductive linear algebraic groups over \(\C\). Then we have the following theorem, which is an equivariant version of \cite[Theorem 1.2]{BGH}, without any assumption on the Picard number of the underlying toric variety.

\begin{thm}\label{extension split}
	Let $X=X(\Xi)$ be a toric variety and \(\mathcal{E}_G\) be a \(T\)-equivariant principal \(G\)-bundle on \(X\).  Denote by \(\mathcal{E}_{\phi}\) the \(T\)-equivariant principal \(G'\)-bundle obtained from \(\mathcal{E}_G\) by extending the structure group via \(\phi\). Suppose that \(\mathcal{E}_{\phi}\) is equivariantly split, then \(\mathcal{E}_G\) itself splits equivariantly.
\end{thm}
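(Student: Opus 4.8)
The plan is to translate equivariant splitting into a statement about equivariant automorphism groups, and then to compare $\text{Aut}_T(\mathcal{E}_G)$ with $\text{Aut}_T(\mathcal{E}_\phi)$. A maximal torus $S$ of a connected reductive group is a Levi subgroup with $Z^0(S)=S$, so by Proposition \ref{levi_reduction} a $T$-equivariant principal bundle splits equivariantly precisely when its equivariant automorphism group contains a maximal torus of the structure group. Hence the theorem is equivalent to the implication: if $\text{Aut}_T(\mathcal{E}_\phi)$ contains a maximal torus of $G'$, then $\text{Aut}_T(\mathcal{E}_G)$ contains a maximal torus of $G$.

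To set up the comparison, fix $e\in(\mathcal{E}_G)_{x_0}$ and, for each $\sigma\in\Xi^*$, a distinguished section $s_\sigma$ of $\mathcal{E}_G|_{X_\sigma}$ with $s_\sigma(x_0)=e$, giving homomorphisms $\rho_\sigma:T\to G$. By Lemma \ref{asso}, $\mathcal{E}_\phi=\mathcal{E}_G\times_G G'$ carries the distinguished sections $s'_\sigma(x):=[(s_\sigma(x),1_{G'})]$, with common value at $x_0$ and associated homomorphisms $\phi\circ\rho_\sigma$. Applying Theorem \ref{Aut} to both bundles (both $G$ and $G'$ being connected reductive),
\[
\text{Aut}_T(\mathcal{E}_G)=\bigcap_{\alpha\in\Xi(1)}P_G(\rho_{\sigma_\alpha}\circ\lambda^{v_\alpha}),\qquad \text{Aut}_T(\mathcal{E}_\phi)=\bigcap_{\alpha\in\Xi(1)}P_{G'}(\phi\circ\rho_{\sigma_\alpha}\circ\lambda^{v_\alpha}),
\]
where $\sigma_\alpha$ is any maximal cone with $\alpha\in\sigma_\alpha(1)$, and $P_G(\,\cdot\,)$, $P_{G'}(\,\cdot\,)$ denote the parabolic subgroups of $G$, resp. $G'$, attached to a one-parameter subgroup. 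The key elementary fact is that $\phi^{-1}\left(P_{G'}(\phi\circ\mu)\right)=P_G(\mu)$ for every one-parameter subgroup $\mu$ of $G$: an injective homomorphism of complex linear algebraic groups is a closed immersion, so $\phi(G)$ is closed in $G'$ and $\phi$ is an isomorphism onto its image; since $(\phi\circ\mu)(z)\,\phi(g)\,(\phi\circ\mu)(z)^{-1}=\phi\left(\mu(z)g\mu(z)^{-1}\right)$ lies in $\phi(G)$ for every $z$, a limit of this curve in $G'$ forces a limit of $\mu(z)g\mu(z)^{-1}$ in $G$, and conversely. Intersecting over $\alpha$ yields $\text{Aut}_T(\mathcal{E}_G)=\phi^{-1}\left(\text{Aut}_T(\mathcal{E}_\phi)\right)$, equivalently $\phi\left(\text{Aut}_T(\mathcal{E}_G)\right)=\text{Aut}_T(\mathcal{E}_\phi)\cap\phi(G)$.

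By this identity it is enough to produce a maximal torus $S$ of $G$ with $\phi(S)\subseteq\text{Aut}_T(\mathcal{E}_\phi)$; then $\phi(S)\subseteq\text{Aut}_T(\mathcal{E}_\phi)\cap\phi(G)=\phi\left(\text{Aut}_T(\mathcal{E}_G)\right)$ forces $S\subseteq\text{Aut}_T(\mathcal{E}_G)$, and the first paragraph concludes. For this I would use two ingredients. First, exactly as in the proof of Proposition \ref{levi_reduction}, for each maximal cone $\sigma$ one may choose a maximal torus $K_\sigma$ of $G$ with $\text{Im}(\rho_\sigma)\subseteq K_\sigma$; since each $\rho_\sigma\circ\lambda^{v_\alpha}$ takes values in the abelian group $K_\sigma$, one gets $K_\sigma\subseteq\bigcap_{\alpha\in\sigma(1)}P^\alpha$, and, applying $\phi$ and the same remark, any maximal torus $K'_\sigma$ of $G'$ containing $\phi(K_\sigma)$ satisfies $K'_\sigma\subseteq\bigcap_{\alpha\in\sigma(1)}P_{G'}(\phi\circ\rho_\sigma\circ\lambda^{v_\alpha})$. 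Thus every ``local'' piece of $\text{Aut}_T(\mathcal{E}_\phi)$ already contains the image of a maximal torus of $G$. Second, the hypothesis provides a single maximal torus $S'$ of $G'$ contained in \emph{every} such local piece simultaneously. The remaining step is to use the combinatorics of the fan $\Xi$ to transport $S'$ — the maximal tori of $G'$ inside a given local piece forming one conjugacy class under the identity component of that piece — so that the transported torus still lies in all local pieces and contains some $\phi(K_\sigma)$, i.e. the image of a maximal torus of $G$.

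I expect this last patching to be the main obstacle; it is precisely the step where the non-equivariant treatment in \cite{BGH} required a bound on the Picard number of $X$, and the gain here is that Theorem \ref{Aut} has reduced it to a purely group-theoretic question about a connected family of parabolic subgroups of $G'$ meeting a common maximal torus. One may alternatively run the whole comparison at the level of admissible collections: since $\mathcal{E}_\phi$ splits, Corollary \ref{6} presents it by an admissible collection with all data in a maximal torus of $G'$, while by Lemma \ref{asso} it also has the admissible collection $\{\phi\circ\rho_\sigma,\phi(P(\tau,\sigma))\}$ coming from $\mathcal{E}_G$, and Theorem \ref{classifi} makes these equivalent; the remaining difficulty — choosing the equivalence so that all the $\phi(\text{Im}\,\rho_\sigma)$ and $\phi(P(\tau,\sigma))$ lie in one common maximal torus of $G'$, which would then give a reduction of $\mathcal{E}_G$ to a torus via Corollary \ref{6} — is the same combinatorial point.
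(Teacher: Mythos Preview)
Your first two paragraphs reproduce the paper's argument verbatim: translate splitting into ``$\text{Aut}_T$ contains a maximal torus'' via Proposition~\ref{levi_reduction}, set up compatible distinguished sections, and prove $\text{Aut}_T(\mathcal{E}_G)=\phi^{-1}\bigl(\text{Aut}_T(\mathcal{E}_\phi)\bigr)$ from $\phi^{-1}\bigl(P_{G'}(\phi\circ\mu)\bigr)=P_G(\mu)$. That part is complete and correct.

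The gap is in what follows. You correctly reduce to finding a maximal torus $S$ of $G$ with $\phi(S)\subseteq\text{Aut}_T(\mathcal{E}_\phi)$, but then you launch into a patching scheme over the cones of $\Xi$, call it ``the main obstacle,'' and leave it unresolved. The paper sidesteps this entirely with a one-line observation you are missing: since all maximal tori of a connected group are conjugate, an equivariant reduction of $\mathcal{E}_\phi$ to \emph{one} maximal torus of $G'$ yields an equivariant reduction to \emph{every} maximal torus of $G'$ (this is exactly Remark~\ref{choice of splitting}). So fix any maximal torus $K_0\subseteq G$, pick a maximal torus $K'_0\subseteq G'$ with $\phi(K_0)\subseteq K'_0$, and apply Remark~\ref{choice of splitting} to get an equivariant reduction of $\mathcal{E}_\phi$ to $K'_0$. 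The forward direction of Proposition~\ref{levi_reduction} then gives $K'_0\subseteq\text{Aut}_T(\mathcal{E}_\phi)$, hence $\phi(K_0)\subseteq\text{Aut}_T(\mathcal{E}_\phi)$, which is precisely the statement you said was ``enough.'' No patching over the fan, no comparison with \cite{BGH}, no admissible-collection gymnastics.

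In short: your proof is the paper's proof up to the point where you stop; what you are calling an obstacle dissolves once you remember that equivariant splitting is insensitive to \emph{which} maximal torus of $G'$ you reduce to, so you may as well pick one adapted to $\phi$.
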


\begin{proof}
	Fix \(e \in (\mathcal{E}_G)_{x_0}\). As in the proof of Theorem \ref{Aut}, for each $\sigma \in \Xi^*$, we choose a distinguished section \(s_{\sigma}\) of \(\mathcal{E}_G\) over \(X_{\sigma}\) such that \(s_{\sigma}(x_0)=e\). Let $\rho_{\sigma} : T \rightarrow G$ denote the corresponding homomorphism. Note that an admissible collection associated to \(\mathcal{E}_G \) is given by \(\{\rho_{\sigma}, 1_G\}\). For each $\sigma \in \Xi^*$, we have a distinguished section of \(\mathcal{E}_{\phi}\) over \(X_{\sigma}\) given by \(s'_{\sigma}(x)=[(s_{\sigma}(x), 1_G)]\) (see \eqref{associated section}). The admissible collection associated to \(\mathcal{E}_{\phi}\) is given by \(\{\phi \circ \rho_{\sigma}, 1_{G'}\}\). Now, for each \(\sigma \in \Xi^*\), we have the subgroups \[P_{\sigma}=\{g \in G ~ |~ \rho_{\sigma}(t) g \rho_{\sigma}(t)^{-1} \text{ extends to $X_{\sigma}$ }\} \] and \[P'_{\sigma}=\{g' \in G' ~ |~ (\phi \circ \rho_{\sigma})(t) g' (\phi \circ \rho_{\sigma})(t)^{-1} \text{ extends to $X_{\sigma}$ }\} .\]
	We claim that 
	\begin{equation}\label{extension parabolic}
		\phi^{-1}(P'_{\sigma})=P_{\sigma}.
	\end{equation}
	To see this, let \(g \in P_{\sigma}\). Let us denote the extension of \[ T \rightarrow G, \  t \mapsto \rho_{\sigma}(t) g \rho_{\sigma}(t)^{-1}\] to \(X_{\sigma}\) by \[\psi_{\sigma}:X_{\sigma} \rightarrow G.\] Then the map \[\phi \circ \psi_{\sigma}:X_{\sigma} \rightarrow G'\] extends the map \[ T \rightarrow G', \  t \mapsto (\phi \circ \rho_{\sigma})(t) \phi(g) (\phi \circ \rho_{\sigma})(t)^{-1}.\] This shows that \(P_{\sigma} \subset \phi^{-1}(P'_{\sigma}) \). For the reverse direction, consider an element \(g' \in P'_{\sigma}\) such that \(g'=\phi(g)\) for some \(g \in G\). We have the map \[\eta_{\sigma}: T \rightarrow G', \    t \mapsto (\phi \circ \rho_{\sigma})(t) \phi(g) (\phi \circ \rho_{\sigma})(t)^{-1}.\] Since \(g' \in P'_{\sigma}\), we have an extension of \(\eta_{\sigma}\), say \(\widetilde{\eta}_{\sigma}:X_{\sigma} \rightarrow G'\). Note that \[\widetilde{\eta}_{\sigma}(T)=\eta_{\sigma}(T) \subset \phi(G).\]
	Thus by continuity of the map \(\widetilde{\eta}_{\sigma}\), we have \[\widetilde{\eta}_{\sigma}(X_{\sigma}) \subset \phi(G),\] since \(\phi(G)\) is closed. This implies that \[\phi^{-1}\circ \widetilde{\eta}_{\sigma}:X_{\sigma} \rightarrow G \] gives an extension of \[ T \rightarrow G, \  t \mapsto \rho_{\sigma}(t) g \rho_{\sigma}(t)^{-1}\,\] to \(X_{\sigma}\). Therefore, we get \(g \in P_{\sigma}\). Hence the claim \eqref{extension parabolic} holds. By Proposition \ref{automorphism}, we have \[\text{Aut}_T(\mathcal{E}_G)= \bigcap_{\sigma \in \Xi^*}P_{\sigma} \text{ and } \text{Aut}_T(\mathcal{E}_{\phi})= \bigcap_{\sigma \in \Xi^*}P'_{\sigma}.\] Thus, by \eqref{extension parabolic}, we get 
	\begin{equation}\label{extension auto}
		\text{Aut}_T(\mathcal{E}_G)= \phi^{-1}(\text{Aut}_T(\mathcal{E}_{\phi})).
	\end{equation}
	
	Let \(K_0\) be a maximal torus in \(G\). Then there exists a maximal torus \(K'_0\) of \(G'\) such that \(\phi(K_0) \) is contained in \(K'_0\). Since the \(T\)-equivariant principal \(G'\)-bundle \(\mathcal{E}_{\phi}\) splits equivariantly, by Remark \ref{choice of splitting}, we may assume that \(\mathcal{E}_{\phi}\) admits an equivariant reduction of structure group to \(K'_0\). Then, by Proposition \ref{levi_reduction}, we have \[K'_0 \subset \text{Aut}_T(\mathcal{E}_{\phi}).\] Taking inverse image under \(\phi\) and using \eqref{extension auto}, we obtain \[K_0 \subset \text{Aut}_T(\mathcal{E}_G).\] Thus, by Proposition \ref{levi_reduction}, \(\mathcal{E}_G\) splits equivariantly. This completes the proof.
	\end{proof}

A vector bundle on $\mathbb{P}^n$ splits precisely when its restriction to some plane \(\mathbb{P}^2 \subset \mathbb{P}^n\) splits (\cite[Theorem 2.3.2]{OSS}). This is proved essentially using Horrock's criterion. Using Theorem \ref{extension split} together with equivariant version of Horrock's criterion (see \cite[Corollary 4.3.3]{Kly}) we have the following corollary (cf. \cite[Corollary 4.1]{BGH}).
 
 \begin{cor}
 	A \(T\)-equivariant principal \(G\)-bundle $\mathcal{E}$ on $\mathbb{P}^n$ admits an equivariant splitting if and only if there is a torus invariant plane \(\mathbb{P}^2 \subset \mathbb{P}^n\) such that $\mathcal{E}|_{\mathbb{P}^2 }$ splits equivariantly.
 \end{cor}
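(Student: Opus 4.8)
The forward implication is immediate: if $\mathcal{E}$ carries an equivariant reduction of structure group to a maximal torus $K\subset G$, say $\mathcal{E}\cong\mathcal{E}_K\times_K G$ for a $T$-equivariant principal $K$-bundle $\mathcal{E}_K$, then restricting to a $T$-invariant plane $\mathbb{P}^2\subset\mathbb{P}^n$ gives $\mathcal{E}|_{\mathbb{P}^2}\cong(\mathcal{E}_K|_{\mathbb{P}^2})\times_K G$, so $\mathcal{E}|_{\mathbb{P}^2}$ splits equivariantly. The substance is the converse, and the plan is to pass to an associated vector bundle, apply the classical splitting criterion there, upgrade it to an equivariant splitting via equivariant Horrocks, and finally descend using Theorem \ref{extension split}.

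Concretely, first I would fix an embedding $\phi\colon G\hookrightarrow{\rm GL}(r,\C)$ as in Section \ref{Kan description} and set $\mathcal{E}_\phi:=\mathcal{E}\times_G{\rm GL}(r,\C)$, the associated $T$-equivariant vector bundle of rank $r$ on $\mathbb{P}^n$. Assuming $\mathcal{E}|_{\mathbb{P}^2}$ splits equivariantly, Remark \ref{choice of splitting} lets me take the reduction to be to the maximal torus $K_0=D(r,\C)\cap G$ of $G$; extending the structure group along $\phi$ then produces an equivariant reduction of $\mathcal{E}_\phi|_{\mathbb{P}^2}$ to $D(r,\C)$, that is, $\mathcal{E}_\phi|_{\mathbb{P}^2}$ is a direct sum of line bundles.

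Next I would propagate this from $\mathbb{P}^2$ up to $\mathbb{P}^n$. Choosing a flag of coordinate (hence $T$-invariant) linear subspaces $\mathbb{P}^2=L_2\subset L_3\subset\cdots\subset L_n=\mathbb{P}^n$ and running the standard Horrocks-criterion argument along the restriction sequences $0\to\mathcal{E}_\phi|_{L_m}(k-1)\to\mathcal{E}_\phi|_{L_m}(k)\to\mathcal{E}_\phi|_{L_{m-1}}(k)\to 0$ --- which is exactly \cite[Theorem 2.3.2]{OSS} --- shows that $\mathcal{E}_\phi$ splits as a direct sum of line bundles on $\mathbb{P}^n$, and in particular $H^i(\mathbb{P}^n,\mathcal{E}_\phi(k))=0$ for all $k\in\Z$ and $0<i<n$. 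Since $\mathcal{E}_\phi$ is $T$-equivariant, Klyachko's equivariant Horrocks criterion \cite[Corollary 4.3.3]{Kly} then yields an \emph{equivariant} decomposition of $\mathcal{E}_\phi$ into equivariant line bundles; equivalently, $\mathcal{E}_\phi$ admits an equivariant reduction of structure group to a maximal torus of ${\rm GL}(r,\C)$. Finally, since $G$ is connected and reductive and $\phi$ is an injective homomorphism into the connected reductive group ${\rm GL}(r,\C)$, Theorem \ref{extension split} applies to the pair $(\mathcal{E},\,\mathcal{E}_\phi)$: the equivariant splitting of $\mathcal{E}_\phi$ forces $\mathcal{E}$ to split equivariantly.

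I expect no conceptual obstacle, only careful bookkeeping at two points: making sure that extension of structure group genuinely turns the equivariant torus-reduction of $\mathcal{E}|_{\mathbb{P}^2}$ into an honest (equivariant) splitting of the vector bundle $\mathcal{E}_\phi|_{\mathbb{P}^2}$, and making sure that the cohomology vanishing produced by the classical argument is precisely the hypothesis required by the equivariant Horrocks criterion, so that an \emph{equivariant} --- not merely abstract --- splitting of $\mathcal{E}_\phi$ is in hand before Theorem \ref{extension split} is invoked. The Horrocks-type propagation from $\mathbb{P}^2$ to $\mathbb{P}^n$ is itself routine and can simply be quoted.
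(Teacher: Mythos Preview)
Your proposal is correct and follows essentially the same route as the paper: embed $G$ in ${\rm GL}(r,\C)$, use the equivariant splitting of $\mathcal{E}|_{\mathbb{P}^2}$ to deduce that the associated vector bundle restricted to $\mathbb{P}^2$ splits, run the Horrocks argument of \cite[Theorem 2.3.2]{OSS} to obtain the intermediate cohomology vanishing on $\mathbb{P}^n$, invoke Klyachko's equivariant Horrocks \cite[Corollary 4.3.3]{Kly} to upgrade to an equivariant splitting of the associated bundle, and then apply Theorem \ref{extension split}. One minor terminological slip: $\mathcal{E}_\phi=\mathcal{E}\times_G{\rm GL}(r,\C)$ is the associated principal ${\rm GL}(r,\C)$-bundle, not the vector bundle itself; the cohomology argument is for the associated rank-$r$ vector bundle $\mathcal{E}\times_G\C^r$, but this does not affect the logic.
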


\begin{proof}
 If $\mathcal{E}$ splits equivariantly, then clearly its restriction to any torus invariant subvariety splits equivariantly. Conversely, assume that there is a torus invariant plane \(\mathbb{P}^2 \subset \mathbb{P}^n\) such that $\mathcal{E}|_{\mathbb{P}^2 }$ splits equivariantly. Consider an embedding $\iota : G \hookrightarrow {\rm GL}(V)$, where \(V\) is a finite dimensional vector space. Since $\mathcal{E}|_{\mathbb{P}^2 }$ splits equivariantly, the associated vector bundle \((\mathcal{E} \times_{\iota} V)|_{\mathbb{P}^2}\)  also splits equivariantly. Now arguing as in the proof of \cite[Theorem 2.3.2]{OSS} we get that \[H^i(\mathbb{P}^n, (\mathcal{E} \times_{\iota} V)(k))=0 \text{ for } i=1, \ldots, n-1 \text{ and } k \in \Z. \]
 Hence \(\mathcal{E} \times_{\iota} V\) splits equivariantly by \cite[Corollary 4.3.3]{Kly}. Then by Theorem \ref{extension split} it follows that  $\mathcal{E}$ splits equivariantly.
\end{proof}
 
We conclude this section with the following principal bundle analogue of a famous result \cite[Corollary 3.5]{Kan2} of Kaneyama on the existence of equivariant splitting for $T$-equivariant vector bundles of rank less than $ n$ on $\mathbb{P}^n$.

\begin{thm}\label{equivkan}
Let $G$ be a connected reductive linear algebraic group over $\C$ and fix an embedding of \(G\) in  ${\rm GL}(r,\C)$. Any $T$-equivariant principal \(G\)-bundle on $\mathbb{P}^n$ splits equivariantly if \(r<n\).
\end{thm}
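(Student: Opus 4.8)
The plan is to reduce the theorem to Kaneyama's splitting result for toric vector bundles by extending the structure group along the given embedding, and then to transfer the resulting splitting back to $G$ by means of Theorem \ref{extension split}. Let $\mathcal{E}_G$ be a $T$-equivariant principal $G$-bundle on $\mathbb{P}^n$, and let $\phi : G \hookrightarrow {\rm GL}(r,\C)$ be the fixed embedding. Form the $T$-equivariant principal ${\rm GL}(r,\C)$-bundle $\mathcal{E}_\phi := \mathcal{E}_G \times_\phi {\rm GL}(r,\C)$, equivalently the associated rank $r$ toric vector bundle $\mathcal{V} := \mathcal{E}_G \times_\phi \C^r$ on $\mathbb{P}^n$.

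First I would apply Kaneyama's theorem \cite[Corollary 3.5]{Kan2}: since $\mathcal{V}$ has rank $r < n$, it splits $T$-equivariantly as a direct sum of $T$-equivariant line bundles. Under the standard dictionary between vector bundles and their frame bundles, such a $T$-equivariant direct sum decomposition $\mathcal{V} \cong \bigoplus_{i=1}^{r} L_i$ into $T$-equivariant line bundles is precisely a $T$-equivariant reduction of the structure group of $\mathcal{E}_\phi$ to the diagonal maximal torus $D(r,\C)$ of ${\rm GL}(r,\C)$. In other words, $\mathcal{E}_\phi$ splits equivariantly in the sense defined in this section.

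Next I would invoke Theorem \ref{extension split}. By hypothesis $G$ is connected and reductive, the group ${\rm GL}(r,\C)$ is connected and reductive, and $\phi$ is injective. Hence Theorem \ref{extension split} applies to the extension $\mathcal{E}_\phi$ of $\mathcal{E}_G$: since $\mathcal{E}_\phi$ is equivariantly split, $\mathcal{E}_G$ splits equivariantly. This proves the theorem.

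All of the substance is thus carried by Theorem \ref{extension split}, which itself rests on the Levi-reduction criterion of Proposition \ref{levi_reduction} together with the compatibility $\text{Aut}_T(\mathcal{E}_G) = \phi^{-1}(\text{Aut}_T(\mathcal{E}_\phi))$ of equivariant automorphism groups under extension of structure group. The only step that requires a small amount of care is the translation in the second paragraph between the two incarnations of an equivariant splitting --- a direct sum decomposition of the associated toric vector bundle versus a reduction of the structure group of its frame bundle to a torus --- but this is routine and can also be phrased through the Kaneyama-type data $(\xi,P)$ of Section \ref{Kan description} and Corollary \ref{6}. An alternative proof, bypassing Kaneyama's corollary and instead combining Theorem \ref{extension split} with results of \cite{BGH} and \cite{BP}, is indicated in the introduction.
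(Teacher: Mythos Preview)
Your proposal is correct and follows essentially the same approach as the paper: extend the structure group to ${\rm GL}(r,\C)$, apply Kaneyama's splitting result \cite[Corollary 3.5]{Kan2} to the associated rank $r$ toric vector bundle, and then use Theorem \ref{extension split} to pull the equivariant splitting back to $\mathcal{E}_G$. The paper also notes the same alternative route via \cite{BGH} and \cite{BP} that you mention.
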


\begin{proof}
Let $\mathcal{E}_G$ be a \(T\)-equivariant principal $G$-bundle on $\mathbb{P}^n$.  Then we have that $\mathcal{E}_G \times_G {\rm GL}(r, \C)$  splits equivariantly since $r<n$ (by \cite[Corollary 3.5]{Kan2}). Then by Theorem \ref{extension split}, $\mathcal{E}_G$ splits equivariantly. (Alternatively, by \cite[Theorem 1.2]{BGH}, $\mathcal{E}_G$ splits. Now by \cite[Proposition 3.2]{BP}, we have an equivariant splitting of $\mathcal{E}_G$.)
\end{proof}

\section{Stability of equivariant principal bundles}

Let $G$ be a connected reductive affine algebraic group defined over $\mathbb C$. The Lie algebra
of $G$ will be denoted by $\mathfrak g$.
The connected component of the center of $G$ containing the identity element will be denoted by $Z^0(G)$.

The Levi factor of a parabolic subgroup $P$ of $G$ is unique up to conjugations by elements of the unipotent
radical of $P$. For any character $\chi \,:\, P \,\longrightarrow\, \C^{*}$ which
is dominant with respect to some Borel subgroup of $G$ contained in $P$, the restriction
of $\chi$ to \(Z(G)\) is trivial (see \cite[Exercise 4, p. 162]{Hum}). 

Let $X$ be 
a complex nonsingular projective toric variety under the action of the torus $T$. Fix a polarization \(L\), i.e. an (equivariant) ample line bundle. The notions of equivariant semistability 
and equivariant stability extend to the \(T\)-equivariant principal bundles on $X$ (see
Definition \ref{estab}). In the case of 
\(T\)-equivariant torsion-free sheaves over a projective toric variety,
 the notion of equivariant semistability (respectively, 
equivariant stability) coincides with that of semistability (respectively, stability); see \cite[Theorem 2.1]{BDGP} or \cite[Proposition 4.13]{Kool}. In this section we deal with this question in the case of \(T\)-equivariant principal \(G\)-bundles. For the definition of stability for any principal \(G\)-bundle over any projective variety see \cite[Definition 3]{RR1}. Analogously, the notion of equivariant (semi)-stability for equivariant principal bundle is given as follows.

\begin{defn}\label{estab}
Let $X$ be a complex nonsingular projective toric variety together with a polarization \(L\). An equivariant principal $G$-bundle $\mathcal{E}_G$ over $X$ is called equivariantly semistable $($respectively, 
equivariantly stable$)$ with respect to \(L\) if for every proper parabolic subgroup $P \,\subsetneq\, G$ and every equivariant reduction 
\(\mathcal{E}_P\) of \(\mathcal{E}_G\) over any \(T\)-invariant open subset \(U \subset X \) with \(\text{codim}(X 
\setminus U) \geq 2\), and for every nontrivial character $\chi \,:\, P \,\longrightarrow\, \C^{*}$ which
is dominant with respect to some Borel subgroup of $G$ contained in $P$, the associated
line bundle $\mathcal{E}_{P}(\chi)$ has 
non-positive $($respectively, negative$)$ degree with respect to the above fixed polarization.
\end{defn}

Using the existence and uniqueness of the canonical reduction of principal \(G\)-bundles (see \cite[Theorem 6]{HN_PB}, \cite{HN BH}), we see that the canonical reduction of a \(T\)-equivariant principal \(G\)-bundle is also \(T\)-equivariant. Hence, the notions of semistability and equivariant semistability coincide for \(T\)-equivariant principal \(G\)-bundles. Clearly a stable equivariant principal \(G\)-bundle is equivariantly stable. We will show in Theorem \ref{stab_indecom} that the converse also holds.

\begin{defn}[{\cite[Definition 3.4]{AB}}]\label{admi_red}
Let \(P\) be a parabolic subgroup of \(G\), and $\mathcal{E}_P$ be the reduction of the structure group of a 
principal \(G\)-bundle $\mathcal{E}_G$ to \(P\). The reduction $\mathcal{E}_P$ is called admissible if for any 
character $\chi$ on \(P\), which is trivial on \(Z^0(G)\), the associated line bundle $\mathcal{E}_{P}(\chi)$ has 
degree zero.
\end{defn}

\begin{defn}[{\cite[Definition 3.5]{AB}}]\label{pstab}
A semistable principal $G$-bundle $\mathcal{E}_G$ over \(X\) is said to be polystable if either $\mathcal{E}_G$ is 
stable or there exists a parabolic subgroup $P$ of $G$ and a reduction $\mathcal{E}_{L(P)}$ of $\mathcal{E}_{G}$ 
to the Levi factor $L(P)$ of $P$ such that
\begin{enumerate}[$($i$)$]
\item the principal \(L(P)\)-bundle $\mathcal{E}_{L(P)}$ is stable, and

\item the principal \(P\)-bundle $\mathcal{E}_{L(P)}(P) \,:=\,\mathcal{E}_{L(P)} \times_{L(P)} P$, obtained
by extending the structure group of $\mathcal{E}_{L(P)}$ using the natural inclusion of the Levi
factor \(L(P)\, \hookrightarrow\, P\), is an admissible reduction of $\mathcal{E}_{G}$ to \(P\). 
\end{enumerate}
\end{defn}

\begin{defn}[{\cite[Definition 2.1]{Krull-Sch-Red}}]
A principal \(G\)-bundle \(\mathcal{E}_G\) over \(X\) is called \(L\)-indecomposable if \(Z^0(G)\) is a
maximal torus of \({\rm Aut}(\mathcal{E}_G)\).
\end{defn}

The following result is a corollary of \cite[Proposition 3.2]{RR} and \cite[Proposition 2.4]{Krull-Sch-Red}.

\begin{prop}\label{stab_indecom}
	Any stable principal \(G\)-bundle on \(X\) is \(L\)-indecomposable.
\end{prop}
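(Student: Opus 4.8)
The aim is to show that $Z^0(G)$ is a maximal torus of $\mathrm{Aut}(\mathcal{E}_G)$. Note first that $Z^0(G)$ always sits inside $\mathrm{Aut}(\mathcal{E}_G)$ as a central subtorus: each $z\in Z(G)$ defines a bundle automorphism $e\mapsto e\cdot z$, which is well defined and commutes with every automorphism precisely because $z$ is central. So the content is maximality, and the plan is to prove the stronger statement that the identity component $\mathrm{Aut}(\mathcal{E}_G)^{0}$ equals $Z^0(G)$. Granting this, any subtorus $S\subseteq\mathrm{Aut}(\mathcal{E}_G)$ is connected, hence $S\subseteq\mathrm{Aut}(\mathcal{E}_G)^{0}=Z^0(G)$; thus $Z^0(G)$ is the unique maximal torus of $\mathrm{Aut}(\mathcal{E}_G)$, which is exactly the definition of $L$-indecomposability.

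To identify $\mathrm{Aut}(\mathcal{E}_G)^{0}$, I would use that, $X$ being projective, $\mathrm{Aut}(\mathcal{E}_G)$ is a linear algebraic group, namely the group of global sections of the adjoint group bundle $\mathcal{E}_G\times_G G$ (for the conjugation action), with Lie algebra $H^0(X,\,\mathrm{ad}(\mathcal{E}_G))$, where $\mathrm{ad}(\mathcal{E}_G):=\mathcal{E}_G\times_G\mathfrak{g}$. The key input is \cite[Proposition 3.2]{RR}: stability of $\mathcal{E}_G$ forces the identity component of $\mathrm{Aut}(\mathcal{E}_G)$ to be as small as possible, concretely that $H^0(X,\,\mathrm{ad}(\mathcal{E}_G))$ consists only of the sections induced by the centre, i.e. $H^0(X,\,\mathrm{ad}(\mathcal{E}_G))=\mathfrak{z}(\mathfrak{g})=\mathrm{Lie}\,Z^0(G)$. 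Since a connected algebraic group that contains a connected algebraic subgroup with the same Lie algebra coincides with it, this yields $\mathrm{Aut}(\mathcal{E}_G)^{0}=Z^0(G)$, completing the argument above. Here \cite[Proposition 2.4]{Krull-Sch-Red} enters to phrase the conclusion in the language of $L$-(in)decomposability: it relates the maximality of $Z^0(G)$ in $\mathrm{Aut}(\mathcal{E}_G)$ to the absence of a reduction of the structure group of $\mathcal{E}_G$ to a proper Levi subgroup of $G$, such a reduction being exactly what would enlarge $\mathrm{Aut}(\mathcal{E}_G)^{0}$ beyond $Z^0(G)$.

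The step I expect to be the crux is the vanishing $H^0(X,\,\mathrm{ad}(\mathcal{E}_G))=\mathfrak{z}(\mathfrak{g})$ for a stable bundle, since this is where both the stability hypothesis and the projectivity (properness) of $X$ are genuinely used, and it is the statement that must be quoted from \cite{RR} with care. Everything around it --- the identification of $\mathrm{Aut}(\mathcal{E}_G)$ with the section group and of its Lie algebra with $H^0(X,\,\mathrm{ad}(\mathcal{E}_G))$, the passage from equality of Lie algebras to equality of identity components, and the final deduction that $Z^0(G)$ is then a maximal torus --- is routine and purely group-theoretic, so the proof is short once the two cited results are in hand.
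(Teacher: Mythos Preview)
Your proposal is correct and follows essentially the same route the paper indicates: the paper offers no argument beyond declaring the proposition a corollary of \cite[Proposition 3.2]{RR} and \cite[Proposition 2.4]{Krull-Sch-Red}, and you have unpacked precisely how those citations combine --- namely, stability forces $H^0(X,\mathrm{ad}(\mathcal{E}_G))=\mathfrak{z}(\mathfrak{g})$, hence $\mathrm{Aut}(\mathcal{E}_G)^0=Z^0(G)$, which is the definition of $L$-indecomposability. One small remark: your description of the role of \cite[Proposition 2.4]{Krull-Sch-Red} is slightly speculative; that result is more likely supplying the higher-dimensional analogue of Ramanathan's curve statement (or the identification of $\mathrm{Lie}\,\mathrm{Aut}(\mathcal{E}_G)$ with $H^0(X,\mathrm{ad}(\mathcal{E}_G))$) rather than reinterpreting maximality via Levi reductions, since once $\mathrm{Aut}(\mathcal{E}_G)^0=Z^0(G)$ the $L$-indecomposability is immediate from the definition.
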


\begin{thm}\label{thmstable}
	Let $\mathcal{E}_G$ be a \(T\)-equivariant principal \(G\)-bundle on a nonsingular projective toric variety
\(X\). Then $\mathcal{E}_G$ is  stable if and only if $\mathcal{E}_G$ is equivariantly stable.
\end{thm}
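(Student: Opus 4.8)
The plan is to reduce the equivalence of stability and equivariant stability to the already-established equivalence for semistability, by combining the structure of polystable bundles with the Krull--Schmidt type rigidity captured by $L$-indecomposability. One direction is trivial: a stable principal $G$-bundle is automatically equivariantly stable, since every equivariant reduction is in particular a reduction and the degree condition is inherited. So the content is the converse. Assume $\mathcal{E}_G$ is equivariantly stable; then, since equivariant semistability and semistability coincide (as noted in the excerpt via the $T$-equivariance of the canonical reduction), $\mathcal{E}_G$ is semistable. The goal is to upgrade semistability plus equivariant stability to honest stability.

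First I would suppose, for contradiction, that $\mathcal{E}_G$ is semistable but not stable. A semistable bundle that is not stable admits a proper parabolic $P \subsetneq G$ and an admissible reduction $\mathcal{E}_P$ in the sense of Definition \ref{admi_red}; passing to the Levi factor, after the standard argument one obtains a reduction $\mathcal{E}_{L(P)}$ to a Levi subgroup $L(P)$ such that $\mathcal{E}_{L(P)}$ is polystable and the associated $P$-reduction is admissible (this is the polystable/socle structure underlying Definition \ref{pstab} together with \cite[Proposition 3.2]{RR}). The key point is that admissibility of the reduction to $P$ forces $Z^0(L(P))$ to sit inside $\text{Aut}(\mathcal{E}_G)$: indeed, for an admissible reduction, the central torus of the Levi acts on $\mathcal{E}_{L(P)}$ by bundle automorphisms that extend to automorphisms of $\mathcal{E}_G$ precisely because all the characters $\chi$ trivial on $Z^0(G)$ give degree-zero line bundles — this is exactly the mechanism by which $L$-decomposability arises (cf. \cite[Proposition 2.4]{Krull-Sch-Red}). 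Hence $\text{Aut}(\mathcal{E}_G)$ contains a torus strictly larger than $Z^0(G)$, so $\mathcal{E}_G$ is \emph{not} $L$-indecomposable.

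Next I would bring in the $T$-equivariant structure to derive a contradiction with equivariant stability. By the uniqueness of the canonical reduction and of the polystable reduction, the reduction $\mathcal{E}_{L(P)}$ (equivalently $\mathcal{E}_P$) is canonically associated to $\mathcal{E}_G$, and canonical objects attached to a $T$-equivariant bundle are themselves $T$-equivariant — the argument is the same as the one invoked in the excerpt for the canonical reduction, using that $T$ is connected so it must preserve any uniquely determined reduction. Therefore $\mathcal{E}_P$ is a $T$-equivariant reduction of $\mathcal{E}_G$ over all of $X$. Since $\mathcal{E}_G$ is not stable, there is a nontrivial dominant character $\chi : P \to \C^*$ for which $\deg \mathcal{E}_P(\chi) = 0$ (for an admissible, hence strictly semistable, reduction the degree is $0$, not negative). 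But equivariant stability demands $\deg \mathcal{E}_P(\chi) < 0$ for every such $\chi$ and every equivariant reduction over a large open set; the existence of this $T$-equivariant reduction with a degree-zero associated line bundle contradicts equivariant stability. This contradiction shows $\mathcal{E}_G$ must be stable.

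The main obstacle I anticipate is making rigorous the claim that the polystable reduction $\mathcal{E}_{L(P)}$ produced from a semistable-but-not-stable bundle is canonical and hence $T$-equivariant. Unlike the canonical (Harder--Narasimhan) reduction, the Levi reduction witnessing failure of stability is only unique up to a restricted class of conjugations, so one must argue that $T$-invariance can nonetheless be arranged — either by averaging/using connectedness of $T$ to act on the (finite or rigid) set of such reductions, or by invoking the $T$-equivariant refinement of the socle filtration. Once that equivariance is in hand, everything else is a bookkeeping exercise with characters and degrees, so I expect the write-up to be short modulo that point; alternatively, one can phrase the whole argument through $\text{Aut}_T(\mathcal{E}_G)$ and Proposition \ref{levi_reduction}, showing that equivariant stability forces $Z^0(G)$ to be a maximal torus of $\text{Aut}_T(\mathcal{E}_G)$, then transferring $L$-indecomposability back to the non-equivariant $\text{Aut}(\mathcal{E}_G)$ via a semicontinuity or Zariski-density argument.
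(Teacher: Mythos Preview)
Your overall architecture is right --- reduce to semistability, then argue that a destabilizing admissible parabolic reduction can be made $T$-equivariant --- and you correctly identify the sticking point. But there are two genuine gaps that the paper's proof fills in ways you do not.

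First, you pass too quickly from ``semistable but not stable'' to the existence of an admissible reduction $\mathcal{E}_P$ that further reduces to a Levi $\mathcal{E}_{L(P)}$. Failure of stability only gives a parabolic reduction over a big open set with $\deg \mathcal{E}_P(\chi)=0$ for \emph{one} dominant character; it does not by itself give an admissible reduction, nor does an admissible parabolic reduction automatically descend to its Levi. That step is exactly polystability, and the paper proves it separately: it shows $\text{ad}(\mathcal{E}_G)$ is semistable, then argues that the socle $\mathcal{S}\subset \text{ad}(\mathcal{E}_G)$ is $T$-invariant (by its uniqueness), so if it were proper it would produce a $T$-invariant parabolic subalgebra bundle of degree~$0$, contradicting equivariant stability via \cite[Lemma~2.11, Proposition~2.12]{AB}. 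Hence $\text{ad}(\mathcal{E}_G)$ is polystable and, by \cite[Corollary~3.8]{AB}, so is $\mathcal{E}_G$. Only then does Definition~\ref{pstab} furnish the Levi reduction.

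Second, the polystable Levi reduction is \emph{not} canonical in a way that forces $T$-equivariance; your acknowledged obstacle is real and your proposed fixes (averaging, or uniqueness of a ``canonical'' polystable reduction) do not work as stated. The paper's maneuver is different and is the substantive idea: inside the semidirect product $\text{Aut}(\mathcal{E}_G)\rtimes T$ choose a maximal torus containing $T$; its intersection with $\text{Aut}(\mathcal{E}_G)$ is a maximal torus $\widetilde{T}\subset \text{Aut}^0(\mathcal{E}_G)$ on which $T$ acts trivially. By \cite[Proposition~3.3]{BP_Levi} this $\widetilde{T}$ produces a \emph{$T$-equivariant} Levi reduction $\mathcal{E}_H$, which is $L$-indecomposable by \cite[Theorem~3.2]{Krull-Sch-Red}. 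The original $\mathcal{E}_{L(P)}$ is also $L$-indecomposable (Proposition~\ref{stab_indecom}), and Krull--Schmidt uniqueness \cite[Proposition~3.3]{Krull-Sch-Red} gives $g_0\in G$, $\Phi\in\text{Aut}^0(\mathcal{E}_G)$ with $H=g_0^{-1}L(P)g_0$ and $\Phi(\mathcal{E}_{L(P)})\cdot g_0=\mathcal{E}_H$. Admissibility then transfers to the $T$-equivariant reduction $\mathcal{E}_H(Q)$ for $Q=g_0^{-1}Pg_0$, giving the degree-zero equivariant parabolic reduction that contradicts equivariant stability. Your closing remark about working through $\text{Aut}_T(\mathcal{E}_G)$ is in the right spirit, but the actual mechanism is this $T$-fixed maximal torus plus Krull--Schmidt transfer, not a canonicity or density argument.
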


\begin{proof}
Let $\mathcal{E}_{G}$ be an equivariantly stable \(T\)-equivariant principal $G$-bundle over $X$. Then the adjoint 
bundle $\text{ad}(\mathcal{E}_{G})\,=\, E_G\times^G{\mathfrak g}$ is a \(T\)-equivariant vector bundle whose 
fibers are Lie algebras identified with ${\mathfrak g}$ uniquely up to automorphisms of $\mathfrak g$ given by 
the adjoint action of $G$. The bundle $\text{ad}(\mathcal{E}_{G})$ is equivariantly semistable (by equivariant 
analogue of \cite[Proposition 2.10]{AB}). By \cite[Proposition 4.13]{Kool}, $\text{ad}(\mathcal{E}_{G})$ is 
semistable. Let $\mathcal{S} \subset \text{ad}(\mathcal{E}_G)$ be its socle which is the unique maximal polystable 
subsheaf (see \cite[Lemma 1.5.5]{HL}). Suppose $\mathcal{S}$ is a proper subsheaf of the adjoint bundle. Then by 
\cite[Lemma 1.5.9]{HL}, $\mathcal{S}$ is invariant under the action of $T$. We note that
the proof in \cite{HL} is for Gieseker semistable sheaves, but exactly same proof works for $\mu$-semistable
sheaves too.

Now by \cite[Proposition 2.12]{AB}, this will produce a $T$-invariant parabolic subalgebra bundle $\mathcal P$ of 
${\rm ad}(\mathcal{E}_G)$ over a $T$-invariant open subset $U \,\subset\, X$ with $\text{codim}(X \setminus U)
\,\geq \,2$ and 
$\text{deg}(\mathcal P)\,=\,0$. This will contradict equivariant stability of $\mathcal{E}_G$ by \cite[Lemma 
2.11]{AB}. Hence ${\rm ad}(\mathcal{E}_G)$ is polystable. Now by \cite[Corollary 3.8]{AB}, $\mathcal{E}_G$ is 
polystable.

Suppose the polystable principal $G$-bundle $\mathcal{E}_G$ is not stable. Then there exists a parabolic subgroup 
$P$ of $G$, and a reduction $\mathcal{E}_{L(P)}$ of $\mathcal{E}_{G}$ to the Levi factor $L(P)$ of $P$ 
satisfying the two conditions of Definition \ref{pstab}.

We claim that there is a parabolic subgroup $Q$ of $G$, and an 
equivariant reduction $\mathcal{E}_{L(Q)}$ of $\mathcal{E}_{G}$ to the Levi factor $L(Q)$ of $Q$, which also 
satisfies the condition \((2)\) of Definition \ref{pstab}.
	
To prove the above claim, let \(\text{Aut}^0(\mathcal{E}_G)\) denote the connected component containing the 
identity element of \(\text{Aut}(\mathcal{E}_G)\).  We can choose a maximal torus $\widetilde{T}$ of 
\(\text{Aut}^0(\mathcal{E}_G)\) on which \(T\) acts trivially as follows (cf. \cite[proof of Theorem 2.1]{BDGP}). 
Consider the semidirect product \(\text{Aut} (\mathcal{E}_G) \rtimes T\), where the action of \(T\) on 
\(\text{Aut} (\mathcal{E}_G)\) is given as follows \[(t \Phi)(e)\,=\,t \Phi(t^{-1} e)\]
for all $\Phi\,\in\, \text{Aut} (\mathcal{E}_G)$, all $t \,\in\, T$ and all $e \,\in \,\mathcal{E}_G$.
Let $$\widetilde{T'} \,\subseteq\, \text{Aut} 
(\mathcal{E}_G) \rtimes T$$ be a maximal torus containing the subgroup \(T\) of $\text{Aut} (\mathcal{E}_G) \rtimes 
T$. Then \[\widetilde{T}\,:=\,\widetilde{T'} \cap \text{Aut}(\mathcal{E}_G)\,\subset\,\text{Aut}(\mathcal{E}_G) \] is 
the desired maximal torus of \(\text{Aut}^0(\mathcal{E}_G)\) on which \(T\) acts trivially.

The above maximal torus $\widetilde{T}$ gives rise to an equivariant reduction $\mathcal{E}_H$ of $\mathcal{E}_G$ to a 
Levi subgroup \(H\) of \(G\) (see \cite[Proposition 3.3]{BP_Levi}). Furthermore, this reduction $\mathcal{E}_H$ is 
\(L\)-indecomposable by \cite[Theorem 3.2]{Krull-Sch-Red}. Now by Proposition \ref{stab_indecom}, the 
\(L(P)\)-bundle \(\mathcal{E}_{L(P)}\) is also \(L\)-indecomposable. Then using \cite[Proposition 
3.3]{Krull-Sch-Red}, there exists \(g_0 \,\in\, G\) and $\Phi \,\in \,\text{Aut}^0(\mathcal{E}_G)$ such that
\begin{equation}\label{eqivstab}
H\,=\,g_0^{-1} \, L(P) \, g_0\ \ \text{ and }\ \ \Phi(\mathcal{E}_{L(P)}) \, g_0\,=\,\mathcal{E}_H\, .
\end{equation}
Set \(Q\,=\,g_0^{-1} \, P \, g_0\); then \(Q\) is a parabolic subgroup with its Levi factor \(L(Q)
\,=\,H\). From \eqref{eqivstab} it is clear that the \(T\)-equivariant principal \(Q\)-bundle $\mathcal{E}_H(Q)$ is
an equivariant reduction of structure group of $\mathcal{E}_G$ to \(Q\) which is also an admissible reduction.
Hence, for any character $\chi$ of \(Q\) which is trivial on  \(Z^0(G)\), we have  
	\begin{equation}\label{admi_1}
		\text{deg}(\mathcal{E}_H(Q)(\chi))\,=\, 0.
	\end{equation}
This proves our claim.
	
As mentioned before, any dominant character of \(Q\) is necessarily trivial on \(Z(G)\). Thus, we have an 
equivariant reduction of structure group $\mathcal{E}_H(Q)$ of $\mathcal{E}_G$ to the parabolic subgroup \(Q\)
such that \eqref{admi_1} holds for any 
dominant character $\chi$ of \(Q\). This contradicts that $\mathcal{E}_G$ is 
equivariantly stable. Hence $\mathcal{E}_G$ is stable.
\end{proof}

\section{Examples}
In this section we give explicit computations for tangent frame bundles of nonsingular complete toric varieties to illustrate our results.

\subsection{The tangent frame bundle}\label{tangentframe}

Let $\mathcal{T}_X$ be the tangent bundle on a nonsingular complete toric variety \(X=X(\Xi)\) of dimension \(n\). Let $\sigma=\text{Cone}(v_1, \ldots, v_n)$ be an \(n\)-dimensional cone in $\Xi$. Let \(u_1, \ldots, u_n \) be the dual basis of \(M\) corresponding to the basis \(v_1, \ldots, v_n\) of \(N\). The affine toric variety defined by \(\sigma\) is \(X_{\sigma}=\text{Spec } \C[x_1, \ldots, x_n]\), where \(x_i=\chi^{u_i}, ~i=1, \ldots, n\). For any \(x \in X_{\sigma}\), the fiber of $\mathcal{T}_X$ at $x$, denoted by \(\left( \mathcal{T}_X\right)_x \), is a complex vector space with a basis \(\{\frac{\partial}{\partial x_1}\big|_x, \ldots, \frac{\partial}{\partial x_n}\big|_x\}\). Then we have a trivialization for $\mathcal{T}_X$ over \(X_{\sigma}\),
\begin{equation}\label{psi_x}
	\begin{split}
		\psi_{\sigma} : \mathcal{T}_X|_{X_{\sigma}}  \stackrel{\cong} \longrightarrow X_{\sigma} \times \C^n, ~	w  \mapsto (x, (c_1, \ldots, c_n)), 
	\end{split}
\end{equation}
\noindent
$\text{where } x \in X_{\sigma}, 	~ w =\sum\limits_{i=1}^n c_i \frac{\partial}{\partial x_i}\big|_x \in \left( \mathcal{T}_X\right)_x \text{ and } c_1,  \ldots,  \ c_n \in \C.$ The tangent bundle \(\mathcal{T}_X\) is naturally a \(T\)-equivariant vector bundle so that 
\begin{equation}\label{action_on_section}
	t \, \frac{\partial}{\partial x_i}\bigg|_x= \chi^{u_i}(t) \frac{\partial}{\partial x_i}\bigg|_{tx}, \text{ for } i=1, \ldots, n.
\end{equation}
We have the following action of \(T\) on $\C^n$ induced by the trivialization $\psi_{\sigma}$ (see \eqref{psi_x}),
\[t(c_1, \ldots, c_n)=(tx, (\chi^{u_1}(t)c_1, \ldots, \chi^{u_n}(t) c_n)), \text{ where } x \in X_{\sigma},\ t \in T \text{ and }  c_1,  \ldots ,  c_n \in \C.\]	

\noindent
The frame bundle associated to \(\mathcal{T}_X\) is a principal \({\rm GL}(n, \C)\)-bundle $$\pi: Fr(\mathcal{T}_X) \rightarrow X,$$ where \(Fr(\mathcal{T}_X)=\bigsqcup\limits_{x \in X} Fr(\left( \mathcal{T}_X\right) _x)\) and the natural projection $\pi$ maps \(Fr(\left( \mathcal{T}_X\right) _x)\) to \(x\). Here \(Fr(\left( \mathcal{T}_X\right) _x)\) denotes the set of all ordered bases of the vector space \(\left( \mathcal{T}_X\right) _x\). We will represent an ordered basis \(w_1, \ldots, w_n\) of \(Fr(\left( \mathcal{T}_X\right) _x)\) by a row vector \(w=[w_1, \, \cdots, \, w_n]\). The local trivialization $\psi_{\sigma}$ of \(\mathcal{T}_X\)  (see \eqref{psi_x}) induces a local trivialization for the frame bundle as follows:
\begin{equation}\label{eg2}
	\begin{split}
		\widetilde{\psi}_{\sigma} : Fr(\mathcal{T}_X)|_{X_{\sigma}}  & \stackrel{\cong} \longrightarrow X_{\sigma} \times Fr(\C^n)\\
		[w_1, \, \cdots, \, w_n]  \in Fr(\left( \mathcal{T}_X\right) _x) & \longmapsto (x, [(pr_2 \circ \psi_{{\sigma}})(w_1), \, \cdots, \, (pr_2 \circ \psi_{{\sigma}})(w_n) ]). 
	\end{split}
\end{equation}
where $pr_2: X_{\sigma} \times \mathbb{C}^n \to \mathbb{C}^n$ is the natural projection.
\noindent	
There is a natural action of \(T\) on \(Fr(\mathcal{T}_X)\) given by 
\begin{equation}\label{action_on_frame}
	t [w_1,\, \cdots,\, w_n]=[t w_1, \, \cdots, \, t w_n], 
\end{equation}
where \(t \in T\) and \([w_1, \, \cdots, \,w_n] \in Fr(\left( \mathcal{T}_X\right) _x)\). This induces an action of \(T\) on $Fr(\C^n)$ via the trivialization \(\widetilde{\psi}_{\sigma} \) in \eqref{eg2} as follows:
\begin{equation}\label{eg3}
	\begin{split}
		&t (x, \, [(c^1_1, \ldots, c^1_n), \, \cdots,  \,(c^n_1, \ldots, c^n_n)]) \\
		&=(tx, \, [(\chi^{u_1}(t)c^1_1, \ldots , \chi^{u_n}(t) c^1_n), \, \cdots, \, (\chi^{u_1}(t)c^n_1, \ldots , \chi^{u_n}(t)c^n_n)]),
	\end{split}
\end{equation}
for \( x \in X_{\sigma}, ~ w_j=\sum\limits_{k=1}^n c^j_k \frac{\partial}{\partial x_k}\big|_x \in Fr(\left( \mathcal{T}_X\right) _x) \text{ and } t \in T.\) Define a homomorphism 
\begin{equation}\label{tangent_action}
	\begin{split}
		\rho_{\sigma}:  T \rightarrow {\rm GL}(n, \C) \text{ given by }   t  \mapsto  \text{diag}(\chi^{u_1}(t), \, \ldots, \, \chi^{u_n}(t)), \, t \in T. 
	\end{split}
\end{equation}
Then, \eqref{eg3} can be written as 
\begin{equation*}\label{corr_homo}
	t (x,\, [\underline{c}^1, \, \cdots, \, \underline{c}^n])=(tx, \, \rho_{\sigma}(t) [\underline{c}^1, \, \cdots, \, \underline{c}^n]), 
\end{equation*}
where  $\underline{c}^j$ is the transpose of $(c^j_1, \, \ldots, \, c^j_n)$.
\noindent
Now, consider the section 
\begin{equation}\label{frame_sec}
	s_{\sigma}: X_{\sigma} \rightarrow Fr(\mathcal{T}_X) \text{ given by } x \mapsto \left[\frac{\partial}{\partial x_1}\bigg|_x, \, \ldots,\, \frac{\partial}{\partial x_n}\bigg|_x\right], \text{ where } x \in X_{\sigma}. 
\end{equation}
Note that \({\rm GL}(n, \C)\) acts on \(Fr(\C^n)\) on the right by matrix multiplication as follows, 
\begin{equation}\label{frame_action}
	w \cdot g=[w_1,\, \cdots,\, w_n] [g^i_j]=\left[\sum\limits_{i} w_i g^i_1,\, \cdots, \,\sum\limits_{i} w_i g^i_n\right].
\end{equation}
Therefore, from \eqref{action_on_section}, \eqref{action_on_frame} and \eqref{frame_sec}, we have 
\begin{equation}\label{equivframe1}
	t s_{\sigma}(x) =  t \left[\frac{\partial}{\partial x_1}\bigg|_x, \, \cdots,\, \frac{\partial}{\partial x_n}\bigg|_x\right]=  \left[\chi^{u_1}(t) \frac{\partial}{\partial x_1}\bigg|_{tx}, \, \cdots,\, \chi^{u_n}(t) \frac{\partial}{\partial x_n}\bigg|_{tx}\right].
\end{equation}
On the other hand, from \eqref{tangent_action}, \eqref{frame_sec} and \eqref{frame_action}, we obtain  
\begin{equation}\label{equivframe2}
	\begin{split}
		s_{\sigma}(tx) \cdot \rho_{\sigma}(t) &=\left[\frac{\partial}{\partial x_1}\bigg|_{tx}, \, \cdots, \,  \frac{\partial}{\partial x_n}\bigg|_{tx}\right] \, \cdot \, \text{diag}(\chi^{u_1}(t), \, \ldots, \, \chi^{u_n}(t))\\
		& =\left[\chi^{u_1}(t) \frac{\partial}{\partial x_1}\bigg|_{tx}, \, \cdots, \, \chi^{u_n}(t) \frac{\partial}{\partial x_n}\bigg|_{tx}\right].
	\end{split}
\end{equation}
\noindent
Now comparing \eqref{equivframe1} and \eqref{equivframe2}, we have  \[t s_{\sigma}(x)=s_{\sigma}(tx) \cdot \rho_{\sigma}(t).\] Hence,  \(s_{\sigma}\), as defined in \eqref{frame_sec}, is a distinguished section with associated homomorphism \(\rho_{\sigma}\) (defined in \eqref{tangent_action}) for each cone \(\sigma \in \Xi(n)\).

\subsection{Equivariant automorphisms of the tangent frame bundle}

Let  $\lambda : \C^* \rightarrow {\rm GL}(n, \C)$ be a $1$-psg given by \[\lambda(z)=\text{diag}(z^{r_1}, \ldots, z^{r_n}),\] where \(z \in \C^* \text{ and } r_1, \, \ldots, \, r_n\) are integers. Then the associated parabolic subgroup is given by 
\begin{equation}\label{parabolic}
	P(\lambda)=\{(a_{ij}) \in {\rm GL}(n, \C)~|~ a_{ij}=0 \text{ whenever } r_i < r_j \}
\end{equation} (see \cite[Page 62]{GIT} ). Recall the generators $v_1, \ldots, v_n$ of $\sigma$ and the dual basis $u_1, \ldots, u_n$ of $M$ from Section \ref{tangentframe}.
Note that for \(z \in \C^*\),
$$\chi^{u_j}(\lambda^{v_l}(z))= \left\{ \begin{array}{ll}
	z& \text{ if } j=l,\\
	1 & \text{ otherwise. }  
\end{array} \right. 
$$
Then, using \eqref{tangent_action}, we have the $1$-psg \(\rho_{\sigma}\circ\lambda^{v_l} : \C^* \rightarrow
{\rm GL}(n, \C)\) defined by \begin{equation}\label{lth}
	(\rho_{\sigma}\circ\lambda^{v_l})(z)=\text{diag}(1, \ldots, 1,\underbrace{z}_{\text{\(l^{th}\)}},1, \ldots, 1),
\end{equation} where \(z \in \C^*, ~l=1, \, \ldots, \, n\). Thus, from \eqref{parabolic}, we have
\begin{equation}\label{affine_auto}
	\bigcap\limits_{\alpha \in \sigma(1)} P^{\alpha}=\bigcap\limits_{l=1}^n P(\rho_{\sigma} \circ \lambda^{v_l})=D(n, \C),
\end{equation}
where \(D(n, \C)\) denotes the group of all invertible diagonal matrices. Hence, from Theorem \ref{Aut}, $$\text{Aut}_T(Fr(\mathcal{T}_X))=\bigcap\limits_{\alpha \in \Xi(1)} P^{\alpha} \subseteq  \bigcap\limits_{\alpha \in \sigma(1)} P^{\alpha} = D(n, \C).$$ 
Therefore, we obtain
\begin{equation}\label{Aut_sc}
	\{a I_n ~|~ a \in \C^*\} \subseteq \text{Aut}_T(Fr(\mathcal{T}_X)) \subseteq D(n, \C).
\end{equation}
We now give explicit description of automorphism group of \(Fr(\mathcal{T}_X)\) for all nonsingular projective toric varieties \(X\) with Picard number $\leq 2$.
\bigskip

\noindent
\subsubsection{Projective space}\label{projspace} The only nonsingular projective toric variety with Picard number $1$ is the projective space. Let \(X=\mathbb{P}^n\) and let $\Xi$ denote the fan of \(\mathbb{P}^n\) in the lattice \(N=\Z^n\). Let \(e_1, \ldots, e_n\) denote the standard basis of \(\Z^n\) and set \(e_0=-e_1-\ldots-e_n\). Then the rays of \(\Xi\)  are \(e_0, e_1, \ldots, e_n\) and the maximal cones are \(\sigma_i=\text{Cone}(e_0, \ldots, \widehat{e}_i, \ldots, e_n)\), for \(i=0, \ldots, n\) (henceforth, by \(\widehat{e}_i\) we mean that \(e_i\) has been omitted from the relevant collection). Let \(e_1^*, \ldots, e_n^* \) be the dual basis of \(M\) corresponding to \(e_1, \ldots, e_n\). The affine toric variety corresponding to the maximal cone \(\sigma_0\) is \(X_{\sigma_0}=\text{Spec}(\C[x_1, \ldots, x_n])\), where \(x_i=\chi^{e_i^*}, \text{ for } i=0, \ldots, n\). To compute the automorphism group, we need to fix \(e \in Fr(\mathcal{T}_X)_{x_0}\) (see Corollary \ref{Aut}). Let \(e=[\frac{\partial}{\partial x_1}\big|_{x_0}, \, \cdots, \, \frac{\partial}{\partial x_n}\big|_{x_0}]\).

From \eqref{affine_auto} (taking \(\sigma=\sigma_0\)),  we get \begin{equation}\label{proj_affine_auto}
	\bigcap\limits_{\alpha \in \sigma_0(1)} P^{\alpha}=D(n, \C).
\end{equation}
Now, to compute \(P^{e_0}\) consider the cone \(\sigma_1=\text{Cone}(e_0,e_2, \ldots,e_n)\). The dual cone of $\sigma$ is $$\sigma_1^{\vee}=\text{Cone}(-e_1^*,-e_1^*+e_2^*, \ldots,-e_1^*+e_n^*)\,.$$  The corresponding affine toric variety is \(X_{\sigma_1}=\text{Spec}(\C[z_1, \ldots, z_n])\), where the coordinates satisfy the following relations: \begin{equation}\label{coordschange} z_1=x_1^{-1}, \,  z_2=x_1^{-1}x_2~, \, \ldots, \, z_n=x_1^{-1}x_n.\end{equation} 
Consider the distinguished section given in accordance with \eqref{frame_sec} by 
\begin{equation}\label{projsection1}
	s_{\sigma_1}: X_{\sigma_1} \rightarrow Fr(\mathcal{T}_X), \, x \mapsto \left[\frac{\partial}{\partial z_1}\bigg|_x, \, \cdots, \, \frac{\partial}{\partial z_n}\bigg|_x\right], 
\end{equation}
where  $x \in X_{\sigma_1}$. Denote the homomorphism corresponding to \(s_{\sigma_1} \) by $\rho_{\sigma_1}$. Then, by \eqref{lth}, we have
\begin{equation}\label{Pneq1}
	(\rho_{\sigma_1} \circ \lambda ^{e_0}) (z)=\text{diag}(z, 1, \ldots, 1), \text{ where }z \in \C^*.
\end{equation}
Using \eqref{coordschange}, we have the following transformation rules,
\begin{align*}
	\frac{\partial}{\partial z_1} &=-\frac{1}{z_1^2} \frac{\partial}{\partial x_1}-\frac{z_2}{z_1^2} \frac{\partial}{\partial x_2}- \ldots-  \frac{z_n}{z_1^2} \frac{\partial}{\partial x_n},\\
	& \\
	\frac{\partial}{\partial z_2}&=\frac{1}{z_1} \frac{\partial}{\partial x_2}, ~ \ldots~ , ~\frac{\partial}{\partial z_n}=\frac{1}{z_1} \frac{\partial}{\partial x_n}.
\end{align*}
Then, from \eqref{projsection1}, we have
$$s_{\sigma_1}(x_0)=\left[-\frac{\partial}{\partial x_1}\bigg|_{x_0}  - \frac{\partial}{\partial x_2}\bigg|_{x_0}-  \ldots  -\frac{\partial}{\partial x_n}\bigg|_{x_0}, ~ \frac{\partial}{\partial x_2}\bigg|_{x_0}, \, \cdots, \, \frac{\partial}{\partial x_n}\bigg|_{x_0}\right].$$
We consider the distinguished section \(s_{\sigma_1} \cdot g_{\sigma_1}\), where 
$$
g_{\sigma_1}=\left( \begin{array}{ccccc} 
	-1 & 0 & 0 & \cdots & 0 \\
	-1 & 1 & 0 & \cdots & 0 \\
	-1 & 0 & 1 & \cdots & 0 \\
	\vdots & \vdots & \vdots & \ddots& 0 \\
	-1 & 0 & 0 & \cdots & 1\ 
\end{array} \right). \\
$$
Let the homomorphism corresponding to \(s_{\sigma_1} \cdot g_{\sigma_1}\) be denoted by $\widehat{\rho}_{\sigma_1}$. Note that \((s_{\sigma_1} \cdot g_{\sigma_1})(x_0)=e\). Then, by Lemma \ref{distconj} and \eqref{Pneq1}, for \(z \in \C^*\), we have
\begin{align}\label{SLcase}
	(\widehat{\rho}_{\sigma_1} \circ \lambda ^{e_0}) (z)& =g_{\sigma_1}^{-1} ~ \text{diag} (z, 1, \ldots, 1) ~ g_{\sigma_1}
	=\left( \begin{array}{ccccc} 
		z & 0 & 0 & \cdots & 0 \\
		z-1 & 1 & 0 & \cdots & 0 \\
		z-1 & 0 & 1 & \cdots & 0 \\
		\vdots & \vdots & \vdots & \ddots& 0 \\
		z-1 & 0 & 0 & \cdots & 1\ 
	\end{array} \right), ~ z \in \C^*. \\
\end{align}
Thus, we obtain
\begin{equation}\label{proj_parabolic}
	\begin{split}
		P^{e_0}&= P(\widehat{\rho}_{\sigma_1} \circ \lambda ^{e_0})\\
		&=\{(a_{ij}) ~|~ \lim\limits_{z \rightarrow 0} (\widehat{\rho}_{\sigma_1} \circ \lambda ^{e_0})(z) (a_{ij}) (\widehat{\rho}_{\sigma_1} \circ \lambda ^{e_0})(z)^{-1} \text{ exists in } {\rm GL}(n, \C)\}\\
		& =\{(a_{ij}) ~|~ \sum\limits_{k=1}^n a_{ik}=\sum\limits_{j=1}^n a_{1j} , \ 2 \leq i \leq n \, ; \, (\sum\limits_{k=1}^n a_{kk})(a_{22}-a_{12}) \, \cdots \, (a_{nn}-a_{1n}) \neq 0\}.
	\end{split}
\end{equation}
Finally, from Theorem \ref{Aut}, and equations \eqref{proj_affine_auto} and \eqref{proj_parabolic}, we have 
\begin{equation}\label{autproj}
	\begin{split}
		\text{Aut}_T(Fr(\mathcal{T}_{\mathbb{P}^n}))&= \bigcap\limits_{k=0}^n P^{e_k}\\
		&= \{(a_{ij}) ~|~ a_{ij}=0 \text{ for } i \neq j \,; \, \sum\limits_{k=1}^n a_{ik}=\sum\limits_{j=1}^n a_{1j} , \, 2 \leq i \leq n ;\\
		& ~ ~ (\sum\limits_{k=1}^n a_{kk})(a_{22}-a_{12}) \, \cdots \, (a_{nn}-a_{1n}) \neq 0\}\\
		&= \{a I_n ~|~ a \in \C^*\}.
	\end{split}
\end{equation}

\noindent
\subsubsection{Picard number two case}\label{pictwo} Now we turn to nonsingular projective toric varieties \(X\) with Picard number 2. Then, \(X = \mathbb{P}(\mathcal{O}_{\mathbb{P}^s}  \oplus \mathcal{O}_{\mathbb{P}^s}(a_1) \oplus \cdots \oplus \mathcal{O}_{\mathbb{P}^s}(a_r)  ),\)  where \(s, r \geq 1\), \(s+r = \text{dim}(X)\), and \(0 \leq a_1 \leq \ldots \leq a_r\), by a result of Kleinschimidt (see \cite{Kleinschmidt}). We recall the fan structure of \(X\) from \cite[Example 7.3.5]{Cox}. Let $\Xi$ be the fan of \(X\) in the lattice \(N=\Z^s \times \Z^r\). Let \(\{v'_1, \ldots, v'_s\}\) and \(\{e_1', \ldots, e_r'\}\) be standard bases of \(\Z^s\) and \(\Z^r\), respectively. Set 
\begin{equation*}
	\begin{split}
		& v_i=(v'_i, {\bf{0}}) \in N \text{ for } 1 \leq i \leq s \text{ , }\\ & e_i=({\bf 0}, e_i') \in N  \text{ for }  1 \leq i \leq r,\\   &v_0=-v_1-\cdots-v_s+a_1e_1+ \cdots+a_r e_r \,, \text{ and } \\&   e_0=-e_1- \cdots-e_r \, .
	\end{split}
\end{equation*}
The rays of $\Xi$ are given by \( v_0, v_1 \ldots, v_s, e_0, e_1, \ldots, e_r \) and the maximal cones are given by
\[\text{Cone}(v_0, \ldots, \widehat{v}_j, \ldots, v_s, \, e_0, \ldots, \widehat{e}_i, \ldots, e_r ), 
\text{ for } j=0, \ldots, s \text{ and } i=0, \ldots, r.\]
Consider the maximal cone $\sigma=\text{Cone}(v_1, \ldots, v_s, e_1, \ldots, e_r)$. Let \(u_1, \ldots, u_s,u_{s+1}, \ldots, u_{s+r}\) be the dual basis of \(M\) corresponding to the basis \(v_1, \ldots, v_s, e_1, \ldots, e_r\) of \(N\). Then the dual cone is $\sigma^{\vee}=\text{Cone}(u_1, \ldots , u_{s+r})$,  and hence the corresponding affine toric variety is $X_{\sigma}=\text{Spec }\C[x_1, \ldots, x_{s+r}],$ where \(x_i=\chi^{u_i}\) for \(i=1, \ldots, s+r\). To compute the automorphism group, fix \(e=[\frac{\partial}{\partial x_1}\big|_{x_0}, \, \cdots, \, \frac{\partial}{\partial x_{s+r}}\big|_{x_0}] \in Fr(\mathcal{T}_X)_{x_0}\) (see Corollary \ref{Aut}). Thus arguing as before, from \eqref{affine_auto}, we have 
\begin{equation}\label{pic2_1}
	\bigcap\limits_{\alpha \in \sigma(1)} P^{\alpha}=D(s+r, \C).
\end{equation}
Note that from Theorem \ref{Aut}, we have
\begin{equation}\label{pic2eq1}
	\text{Aut}_T(Fr(\mathcal{T}_X))=\bigcap\limits_{\alpha \in \Xi(1)} P^{\alpha} = \left( \bigcap\limits_{\alpha \in \sigma(1)} P^{\alpha}\right)  \bigcap P^{e_0} \bigcap P^{v_0}.
\end{equation} 
\noindent
Let us consider the cone $$\tau=\text{Cone}(v_1, \ldots, v_s, e_0, e_2, \ldots, e_r).$$ The dual cone is given by $$\tau^{\vee}=\text{Cone}(u_1, \ldots, u_s, -u_{s+1}, -u_{s+1}+u_{s+2}, \ldots, -u_{s+1}+u_{s+r} ).$$ Denote the corresponding affine toric variety is $X_{\tau}=\text{Spec }\C[y_1, \ldots, y_{s+r}].$ 	These coordinates satisfy the following:
\begin{equation}\label{pic2_2}
	\begin{split}
		&x_1=y_1, ~\ldots~,~ x_s=y_s,\\
		& x_{s+1}=\frac{1}{y_{s+1}}, ~x_{s+2}=\frac{y_{s+2}}{y_{s+1}},~ \ldots~,~ x_{s+r}=\frac{y_{s+r}}{y_{s+1}}.
	\end{split}
\end{equation}
\noindent
Consider the distinguished section given by \[s_{\tau}: X_{\tau} \rightarrow Fr(\mathcal{T}_X),~ x \mapsto \left[\frac{\partial}{\partial y_1}\bigg|_x, \, \cdots ,\, \frac{\partial}{\partial y_{s+r}}\bigg|_x\right] \]
(cf. \eqref{frame_sec}). Denote the  corresponding homomorphism  by $\rho_{\tau}$. Then by \eqref{lth}, for \(z \in \C^*\) we have
\begin{equation}\label{pic2eq2}
	(\rho_{\tau} \circ \lambda ^{e_0})(z)  =\left( \begin{array}{cccc} 
		I_s & O \\
		O &  D(z)\
	\end{array} \right), 
\end{equation}
where \(I_s\) is the identity matrix of order \(s\) and $D(z)=\text{diag}(z, 1, \ldots, 1)$ is a diagonal matrix of order $r$.
\noindent
Using chain rule, from \eqref{pic2_2} we have,
\begin{align*}
	&\frac{\partial}{\partial y_1}=\frac{\partial}{\partial x_1}, ~\ldots~,~ \frac{\partial}{\partial y_s}=\frac{\partial}{\partial x_s},\\
	& \frac{\partial}{\partial y_{s+1}}=-\frac{1}{y_{s+1}^2} \frac{\partial}{\partial x_{s+1}}-\frac{y_{s+2}}{y_{s+1}^2} \frac{\partial}{\partial x_{s+2}}- \ldots-\frac{y_{s+r}}{y_{s+1}^2} \frac{\partial}{\partial x_{s+r}},\\
	& \frac{\partial}{\partial y_{s+2}}=\frac{1}{y_{s+1}} \frac{\partial}{\partial x_{s+2}}, ~\ldots~,~ \frac{\partial}{\partial y_{s+r}}=\frac{1}{y_{s+1}} \frac{\partial}{\partial x_{s+r}} \,.
\end{align*}
Therefore,
$$s_{\tau}(x_0)=\left[\frac{\partial}{\partial x_1}\bigg|_{x_0}~, ~ \ldots~ ,~ \frac{\partial}{\partial x_s}\bigg|_{x_0}, ~ -\frac{\partial}{\partial x_{s+1}}\bigg|_{x_0}-\ldots-\frac{\partial}{\partial x_{s+r}}\bigg|_{x_0}~, ~ \frac{\partial}{\partial x_{s+2}}\bigg|_{x_0} ~,~ \ldots ~,~ \frac{\partial}{\partial x_{s+r}}\bigg|_{x_0}\right].$$
We consider the distinguished section \(s_{\tau} \cdot g_{\tau}\), where \[g_{\tau}=\left( \begin{array}{cccc} 
	I_s & O \\
	O &  J_r\
\end{array} \right), \] 
\(I_s\) is the identity matrix of order \(s\), and \(J_r\) is the \(r \times r\) matrix given by
\begin{equation}\label{Jr}
	J_r=\left( \begin{array}{ccccc} 
		-1 & 0 & 0 & \cdots & 0 \\
		-1 & 1 & 0 & \cdots & 0 \\
		-1 & 0 & 1 & \cdots & 0 \\
		\vdots & \vdots & \vdots & \ddots& 0 \\
		-1 & 0 & 0 & \cdots & 1\ 
	\end{array} \right)_{r \times r}. 
\end{equation}	
Note that \((s_{\tau} \cdot g_{\tau})(x_0)=e.\) 
Denote the homomorphism corresponding to \(s_{\tau} \cdot g_{\tau}\) by $\widehat{\rho}_{\tau}$. Then, by Lemma \ref{distconj} and \eqref{pic2eq2}, for \(z \in \C^*\) we have
\begin{equation}\begin{split}\label{pic2eqn8}
		(\widehat{\rho}_{\tau} \circ \lambda ^{e_0})(z)  & =g_{\tau}^{-1} \left( \begin{array}{cccc} 
			I_s & O \\
			O &  D(z)\
		\end{array} \right) g_{\tau}\\
		& =\left( \begin{array}{cccc} 
			I_s & O \\
			O &  J_rD(z)J_r\
		\end{array} \right).
	\end{split}
\end{equation}
Let us define 
\begin{equation}\label{pic2matrixb}
	B_r(z):=J_rD(z)J_r=\left( \begin{array}{ccccc} 
		z & 0 & 0 & \cdots & 0 \\
		z-1 & 1 & 0 & \cdots & 0 \\
		z-1 & 0 & 1 & \cdots & 0 \\
		\vdots & \vdots & \vdots & \ddots& 0 \\
		z-1 & 0 & 0 & \cdots & 1\ 
	\end{array} \right)_{r \times r}. 
\end{equation}
Then, from \eqref{pic2eqn8}, we get 
\begin{equation}\label{pic2parabolic}
	(\widehat{\rho}_{\tau} \circ \lambda ^{e_0})(z) =\left( \begin{array}{cccc} 
		I_s & O \\
		O &  B_r(z)\
	\end{array} \right), 
\end{equation}
where \(z \in \C^*. \) Now, we have to compute 
\begin{equation*}
	\begin{split}
		P(\widehat{\rho}_{\tau} \circ \lambda ^{e_0}) \cap D(s+r, \C)= \{ (a_{ij}) \in  D(s+r, \C) ~|~& \lim\limits_{z \rightarrow 0} (\widehat{\rho}_{\tau} \circ \lambda ^{e_0})(z) (a_{ij}) (\widehat{\rho}_{\tau} \circ \lambda ^{e_0})(z)^{-1}\\
		&  ~ \text{ exists in } {\rm GL}(s+r, \C) \} \,.
	\end{split}
\end{equation*}
From \eqref{pic2parabolic}, we have
\begin{align*}
	(\widehat{\rho}_{\tau} \circ \lambda ^{e_0})(z)~ \text{diag}(a_{11}, \ldots, a_{s+r\, s+r}) ~ (\widehat{\rho}_{\tau} \circ \lambda ^{e_0})(z)^{-1} 
	&=\left( \begin{array}{cc} 
		A_1 & O \\
		O &  B_r(z) A_2 B_r(z)^{-1}\
	\end{array} \right),
\end{align*}
where \(A_1=\text{diag}(a_{11}, \ldots, a_{s s})\) and \(A_2=\text{diag}(a_{s+1\, s+1}, \ldots, a_{s+r\, s+r})\). Thus, 
\begin{equation}\label{e0cap} \begin{array}{ll}
		P^{e_0}\cap D(s+r, \C) &=P(\widehat{\rho}_{\tau} \circ \lambda ^{e_0}) \cap D(s+r, \C)\\ 
		& =\left\{\left( \begin{array}{cccc} 
			A_1 & O \\
			O &  a I_r\
		\end{array} \right) ~|~ a \in \C^*\, ,~
		A_1=\text{diag}(a_{11}, \ldots, a_{s s}) \in {\rm GL}(s, \C) \right\},
	\end{array}
\end{equation} 
by a similar argument as in \eqref{autproj}.

Finally, we consider the cone $$\gamma=\text{Cone}(v_0, v_2, \ldots, v_s, e_1, \ldots, e_r).$$ The dual cone of $\gamma$ is  $$\gamma^{\vee}=\text{Cone}(-u_1, -u_1+u_2, \ldots, -u_1+u_s, a_1u_1+u_{s+1}, \ldots, a_ru_1+u_{s+r})\,.$$ Denote the corresponding affine toric variety by $X_{\gamma}=\text{Spec }\C[z_1, \ldots, z_{s+r}].$ We have the following relations among the $z$ and the $x$ coordinates,
\begin{equation}\label{zxeqs}
	\begin{split}
		&x_1=\frac{1}{z_1}, ~x_2=\frac{z_2}{z_1}, ~\ldots~,~ x_s=\frac{z_s}{z_1},\\ &x_{s+1}=z_1^{a_1} z_{s+1}, ~\ldots~,~ x_{s+r}=z_1^{a_r} z_{s+r}.
	\end{split}
\end{equation}
By \eqref{frame_sec}, we have a distinguished section \ \[s_{\gamma}: X_{\gamma} \rightarrow Fr(\mathcal{T}_X),~ x \mapsto \left[\frac{\partial}{\partial z_1}\bigg|_x, \, \cdots , \, \frac{\partial}{\partial z_{s+r}}\bigg|_x\right] \,. \]
Let $\rho_{\gamma}$ denote the corresponding homomorphism. Then for \(z \in \C^*\), we have
\begin{equation}\label{pic2eq5}
	(\rho_{\gamma} \circ \lambda ^{e_0})(z)  =\left( \begin{array}{cccc} 
		D_s(z) & O \\
		O & I_r \
	\end{array} \right), 
\end{equation}
where \(I_r\) is the identity matrix of order \(r\) and $D_s(z)=\text{diag}(z, 1, \ldots, 1)$ is a diagonal matrix of order \(s\). 
Applying chain rule on \eqref{zxeqs}, we have
\begin{equation*}
	\begin{split}
		& \frac{\partial}{\partial z_1}=-\frac{1}{z_1^2} \frac{\partial}{\partial x_1}-\frac{z_2}{z_1^2} \frac{\partial}{\partial x_2}-\ldots-\frac{z_s}{z_1^2} \frac{\partial}{\partial x_s}+a_1 z_1^{a_1-1} z_{s+1} \frac{\partial}{\partial x_{s+1}}+ \ldots+ a_r z_1^{a_r-1} z_{s+r} \frac{\partial}{\partial x_{s+r}}, \\
		& \frac{\partial}{\partial z_2}= \frac{1}{z_1} \frac{\partial}{\partial x_2},~ \ldots~,~ \frac{\partial}{\partial z_s}= \frac{1}{z_1} \frac{\partial}{\partial x_s}, \\
		& \frac{\partial}{\partial z_{s+1}}= z_1^{a_1} \frac{\partial}{\partial x_{s+1}}, ~\ldots~,~ \frac{\partial}{\partial z_{s+r}}= z_1^{a_r} \frac{\partial}{\partial x_{s+r}}\, .
	\end{split}
\end{equation*}
Thus, $$s_{\gamma}(x_0)=\left[-\frac{\partial}{\partial x_{1}}\bigg|_{x_0}-\ldots-\frac{\partial}{\partial x_{s}}\bigg|_{x_0}+a_1 \frac{\partial}{\partial x_{s+1}}\bigg|_{x_0}+\ldots +a_r\frac{\partial}{\partial x_{s+r}}\bigg|_{x_0}, ~\frac{\partial}{\partial x_2}\bigg|_{x_0}, ~ \cdots,  ~ \frac{\partial}{\partial x_{s+r}}\bigg|_{x_0}\right].$$
We consider the distinguished section \(s_{\gamma} \cdot g_{\gamma}\), where \(g_{\gamma}=\left( \begin{array}{cc} 
	J_s & O \\
	A &  I_r\
\end{array} \right), \\\) and 
$$
A=\left( \begin{array}{cccc} 
	a_1  & 0 & \cdots & 0 \\
	a_2  & 0 & \cdots & 0 \\
	\vdots  & \vdots & \ddots& 0 \\
	a_r & 0 & \cdots & 1\ 
\end{array} \right)_{r \times s}. \\
$$
Note that \((s_{\gamma} \cdot g_{\gamma})(x_0)=e\). Let $\widehat{\rho}_{\gamma}$ denote the corresponding homomorphism.	Then from Lemma \ref{distconj} and \eqref{pic2eq5}, for \(z \in \C^*\), we have
\begin{equation}\label{pic2eqn6}
	\begin{split}
		(\widehat{\rho}_{\gamma} \circ \lambda ^{v_0})(z)   =g_{\gamma}^{-1} \left( \begin{array}{cccc} 
			D_s(z) & O \\
			O &  I_r\
		\end{array} \right) g_{\gamma} =\left( \begin{array}{cccc} 
			J_sD_s(z)J_s & O \\
			AD_s(z)J_s+A &  I_r\
		\end{array} \right).
	\end{split}
\end{equation}
As in \eqref{pic2matrixb}, set \(B_s(z)=J_sD_s(z)J_s \). Define \[C(z):=AD_s(z)J_s+A=\left( \begin{array}{cccc} 
	a_1(1-z)  & 0 & \cdots & 0 \\
	a_2 (1-z) & 0 & \cdots & 0 \\
	\vdots  & \vdots & \ddots& 0 \\
	a_r(1-z) & 0 & \cdots & 1\ 
\end{array} \right). \]
Thus from \eqref{pic2eqn6}, we have 
\begin{equation}\label{pic2eqn7}
	(\widehat{\rho}_{\gamma} \circ \lambda ^{v_0})(z)=\left( \begin{array}{cccc} 
		B_s(z) & O \\
		C(z) &  I_r\
	\end{array} \right) .
\end{equation}
Recall that 
\begin{align*}
	P^{v_0}=P(\widehat{\rho}_{\gamma} \circ \lambda ^{v_0})=&\{(a_{ij}) ~|~ \lim\limits_{z \rightarrow 0} (\widehat{\rho}_{\gamma} \circ \lambda ^{v_0})(z) (a_{ij}) (\widehat{\rho}_{\gamma} \circ \lambda ^{v_0})(z)^{-1} \text{ exists in } {\rm GL}(n, \C)\}.
\end{align*}
Now, consider \((a_{ij}) \in P^{e_0}\cap D(s+r, \C) \).  Recall from \eqref{e0cap} that \((a_{ij})\) is of the  form, 
\begin{equation}\label{matrix}
	(a_{ij})=\left( \begin{array}{cccc} 
		A_1 & O \\
		O &  a I_r\
	\end{array} \right), \text{ where } A_1=\text{diag}(a_{11}, \ldots, a_{s, s}) \text{ and } a \in \C^*.
\end{equation}
Then from \eqref{pic2eqn7} and \eqref{matrix}, we have
\begin{align*}
	(\widehat{\rho}_{\gamma} \circ \lambda ^{v_0})(z) (a_{ij}) (\widehat{\rho}_{\gamma} \circ \lambda ^{v_0})(z)^{-1}  &=\left( \begin{array}{cccc} 
		B_s(z) & O \\
		C(z) &  I_r\
	\end{array} \right)  \left( \begin{array}{cccc} 
		A_1 & O \\
		O &  a I_r\
	\end{array} \right) \left( \begin{array}{cccc} 
		B_s(z)^{-1} & O \\
		-C(z)B_s(z)^{-1} &  I_r\
	\end{array} \right)\\
	&=\left( \begin{array}{cccc} 
		B_s(z) A_1 B_s(z)^{-1}& O \\
		C(z)A_1B(z)^{-1}+ \lambda \widetilde{C}(z) &  a I_r\
	\end{array} \right),
\end{align*}
where \(\widetilde{C}(z)=-C(z)B(z)^{-1}= \frac{1}{z} \left( \begin{array}{cccc} 
	a_1(1-z)  & 0 & \cdots & 0 \\
	a_2 (1-z) & 0 & \cdots & 0 \\
	\vdots  & \vdots & \ddots& 0 \\
	a_r(1-z) & 0 & \cdots & 1\ 
\end{array} \right).\\\) 
Now, $$\lim\limits_{z \rightarrow 0} (\widehat{\rho}_{\gamma} \circ \lambda ^{v_0})(z) (a_{ij}) (\widehat{\rho}_{\gamma} \circ \lambda ^{v_0})(z)^{-1} \text{ exists in } {\rm GL}(s+r, \C)$$ if and only if
\begin{equation}\label{pic2lim1}
	\lim\limits_{z \rightarrow 0} B_s(z) A_1 B_s(z)^{-1} \text{ exists in } {\rm GL}(s, \C) 
\end{equation}
and 
\begin{equation}\label{pic2lim2}
	\lim\limits_{z \rightarrow 0} C(z)A_1B_s(z)^{-1}+ \lambda \widetilde{C}(z) \text{ exists in } M_{r \times s}( \C).
\end{equation}
As in the computation of $\text{Aut}_T(Fr(\mathcal{T}_{\mathbb{P}^n}))$ (cf. \eqref{autproj}),	the limit in \eqref{pic2lim1} exists if and only if 
\begin{equation}\label{matrix2}
	A_1=b I_s, \text{ for some } b \in \C^* . 
\end{equation} 
Suppose that the limit in \eqref{pic2lim1} exists. This implies that \(A_1=b I_s\) for some $b \in \C^*$. Then, from \eqref{pic2lim2}, we see that
\begin{equation}\label{lim}
	C(z)A_1B(z)^{-1}+ \lambda \widetilde{C}(z)=\frac{1}{z} \left( \begin{array}{cccc} 
		(a_1 b-a a_1)(1-z)  & 0 & \cdots & 0 \\
		(a_2 b-a a_2) (1-z) & 0 & \cdots & 0 \\
		\vdots  & \vdots & \ddots& 0 \\
		(a_r b-a a_r)(1-z) & 0 & \cdots & 1\ 
	\end{array} \right).
\end{equation}
Hence, the limit of \eqref{lim} as \(z \rightarrow 0\) exists if and only if 
$$(b-a) a_1 \,=\, \ldots \,= \, (b-a) a_r=0.$$ 
This implies that $b=a$ if \((a_1, \ldots, a_r)\neq (0, \ldots, 0)\), and otherwise, $b$ and $a$ are independent elements in \(\C^*\).
Thus, we have 
\begin{align*}
	&\text{Aut}_T(Fr(\mathcal{T}_{X}))= \bigcap\limits_{k=0}^n P^{e_k}= 	\begin{cases}
		\{a I_{s+r} ~|~ a \in \C^*\} , &{\rm if}\ (a_1, \ldots, a_r)\neq (0, \ldots, 0), \  \\
		\left\{ \left( \begin{array}{cccc} 
			b I_s & O \\
			O&  a I_r\
		\end{array} \right) ~ |~ a, b \in \C^* \right\},  & {\rm if}\ (a_1, \ldots, a_r)= (0, \ldots, 0).
	\end{cases}
\end{align*}

\begin{rmk}{\rm 
		One can also use the Klyachko filtrations of the tangent bundle to compute its equivariant automorphisms and derive the above results in a relatively easier manner. 
		In the next subsection, we give an example that is beyond the scope of Klyachko filtrations. 
	}
\end{rmk}

\subsection{Extension of structure group}\label{extsg}

Let $\phi: G \rightarrow G'$ be a homomorphism of algebraic groups. Let $\mathcal{P}$ be a principal \(G\)-bundle on \(X\). Denote by \(\mathcal{P}_{\phi}: = \mathcal{P} \times_{G} G'\) be the principal \(G'\)-bundle obtained from $\mathcal{P}$ by extending the structure group via the homomorphism $\phi$. Then we have a homomorphism between the bundle automorphism groups 
\begin{equation*}
\begin{split}
		\tilde{\phi}: \text{Aut}^G(\mathcal{P}) & \rightarrow \text{Aut}^{G'}(\mathcal{P}_{\phi})\\
		\Phi & \mapsto ([(e, g')] \mapsto [(\Phi(e), g')]), \, \text{for } e \in \mathcal{P}, g' \in G'.
\end{split}
\end{equation*}
Note that if $\phi$ is injective, then the induced map \(\tilde{\phi}\) is also injective. To see this, let \(\Phi_1, \Phi_2 \in \text{Aut}^G(\mathcal{P})\) such that \(\tilde{\phi}(\Phi_1)=\tilde{\phi}(\Phi_2)\). Then for any \(e \in \mathcal{P}\) and \(g' \in G'\), we have \([(\Phi_1(e), g')]=[(\Phi_2(e), g')]\). Hence there exists \(g \in G\) such that
\begin{equation*}
	\begin{split}
		& (\Phi_1(e) g, \phi(g)^{-1} g')=((\Phi_2(e), g')).\\
	\end{split}
\end{equation*}
This forces \(g=1_G\) as $\phi$ is injective and hence we get \(\Phi_1=\Phi_2\).

If in addition, $\mathcal{P}$ is a toric principal bundle, then similar result holds for the equivariant bundle automorphidm group \(\text{Aut}_T(\mathcal{P})\).

Let \(X=\mathbb{P}^2\) and \(\mathcal{T}\) denote the tangent bundle on \(X\). Let \(\mathcal{P}=Fr(\mathcal{T})\) denote the associated frame bundle. Consider the injective homomorphism $\phi: {\rm GL}(2, \C) \rightarrow {\rm SL}(3, \C)$ given by 
\[A \longmapsto \left( \begin{array}{cccc} 
	A & O \\
	O &  \text{det}(A)^{-1}\
\end{array} \right).\]

Let \(\mathcal{P}_{\phi_1}:= \mathcal{P} \times_{{\rm GL}(2, \C)} {\rm SL}(3, \C)\) be the extension of structure group to \({\rm SL}(3, \C)\) via the map $\phi$. Then by Lemma \ref{asso}, the admissible collection associated to \(\mathcal{P}_{\phi_1}\) is given by \(\{\phi \circ \rho_{\sigma}, \phi(P(\tau, \sigma))\}\), where \(\left\lbrace \rho_{\sigma}, P(\tau, \sigma)\right\rbrace \) is the admissible collection of the tangent frame bundle considered in Section \ref{tangentframe}. Let $\sigma=\text{Cone}(e_1, e_2)$ (recall the description of the fan of $\mathbb{P}^2$ from Section \ref{projspace}). To get the parabolic subgroups \(P^{e_i}, \, i=1,2\) consider the 1-psgs given by
\begin{equation*}\label{1psgSl} 
	(\phi \circ \rho_{\sigma} \circ \lambda^{e_1})(z)=\text{diag}(z, 1, \frac{1}{z}) \text{ and } (\phi \circ \rho_{\sigma} \circ \lambda^{e_2})(z)=\text{diag}(1, z, \frac{1}{z}), \, z \in \C^*. 
\end{equation*}
Note that if $\lambda : \C^* \rightarrow {\rm SL}(3, \C)$ is a $1$-psg given by \[\lambda(z)=\text{diag}(z^{r_1}, z^{r_2}, z^{r_3}),\] where \(z \in \C^* \text{ and } r_1, \, r_2, \, r_3\) are integers. Then the associated parabolic subgroup is given by 
\begin{equation}\label{SLparabolic}
	P(\lambda)=\{(a_{ij}) \in {\rm SL}(3, \C)~|~ a_{ij}=0 \text{ whenever } r_i < r_j \}.
\end{equation}
Hence, we have
\begin{equation}\label{capSL}
	P^{e_1} \cap P^{e_2} =\left\lbrace  \left( \begin{array}{cccc} 
		a_{11} & 0 & a_{13} \\
		0 & a_{22} & a_{23}  \\
		0 & 0 & a_{33}\
	\end{array} \right) \in  {\rm SL}(3, \C) ~|~ a_{11}, a_{13}, a_{22}, a_{23}, a_{33} \in \C, \, a_{11}a_{22}a_{33}=1 \right\rbrace .
\end{equation}
Now we compute \(P^{e_0}\). We consider the cone \(\sigma_1=\text{Cone}(e_0, e_2)\) (cf. Section \ref{projspace}). Then the distinguished section on \(X_{\sigma_1}\) is given by
\begin{equation}\label{Sl1}
	s'_{\sigma_1}(x)=\left[\frac{\partial}{\partial z_1}\bigg|_x, \, \frac{\partial}{\partial z_2}\bigg|_x, \, I_3\right] \in \mathcal{P} \times_{{\rm GL}(2, \C)} {\rm SL}(3, \C).  
\end{equation}
Using \eqref{coordschange}, we have the following transformation rules,
\begin{align*}
	\frac{\partial}{\partial z_1} =-\frac{1}{z_1^2} \frac{\partial}{\partial x_1}-\frac{z_2}{z_1^2} \frac{\partial}{\partial x_2}, ~
	\frac{\partial}{\partial z_2}=\frac{1}{z_1} \frac{\partial}{\partial x_2}.
\end{align*}
Then, from \eqref{Sl1}, we have
$$s'_{\sigma_1}(x_0)=\left[-\frac{\partial}{\partial x_1}\bigg|_{x_0}  - \frac{\partial}{\partial x_2}\bigg|_{x_0}, ~ \frac{\partial}{\partial x_2}\bigg|_{x_0}, \, I_3\right].$$
We consider the distinguished section \(s'_{\sigma_1} \cdot g'_{\sigma_1}\), where 
$$
g'_{\sigma_1}=\phi(g_{\sigma_1})=\left( \begin{array}{ccccc} 
	-1 & 0 & 0  \\
	-1 & 1 & 0 \\
	0 & 0 & -1  \
\end{array} \right), \\
$$
where $g_{\sigma_1}$ is defined in \eqref{SLcase}. Note that 
\begin{equation*}
	\begin{split}
		(s'_{\sigma_1} \cdot g'_{\sigma_1})(x_0)& =s'_{\sigma_1}(x_0) \cdot g'_{\sigma_1}=\left[-\frac{\partial}{\partial x_1}\bigg|_{x_0}  - \frac{\partial}{\partial x_2}\bigg|_{x_0}, ~ \frac{\partial}{\partial x_2}\bigg|_{x_0}, \, \phi(g_{\sigma_1}) \right]\\
		& =\left[ \left[-\frac{\partial}{\partial x_1}\bigg|_{x_0}  - \frac{\partial}{\partial x_2}\bigg|_{x_0}, ~ \frac{\partial}{\partial x_2}\bigg|_{x_0}\right] \cdot g_{\sigma_1}, \, I_3 \right]\\
		& =\left[ s_{\sigma_1}(x_0), \, I_3 \right].
	\end{split}
\end{equation*}
 Let the homomorphism corresponding to \(s'_{\sigma_1} \cdot g'_{\sigma_1}\) be denoted by $\widehat{\rho}'_{\sigma_1}$.  Then, by Lemma \ref{distconj} and \eqref{lth}, for \(z \in \C^*\), we have
\begin{align*}
	(\widehat{\rho}'_{\sigma_1} \circ \lambda ^{e_0}) (z)& =g_{\sigma_1}'^{-1} ~ \text{diag} (z, 1, \frac{1}{z}) ~ g'_{\sigma_1}
	=\left( \begin{array}{ccccc} 
		z & 0 & 0   \\
		z-1 & 1 & 0  \\
	0	 & 0 & \frac{1}{z}  \
	\end{array} \right), ~ z \in \C^*. \\
\end{align*}
Thus, we obtain
\begin{equation}\label{proj_parabolic}
	\begin{split}
		P^{e_0}&= P(\widehat{\rho}'_{\sigma_1} \circ \lambda ^{e_0})\\
		&=\{(a_{ij}) ~|~ \lim\limits_{z \rightarrow 0} (\widehat{\rho}'_{\sigma_1} \circ \lambda ^{e_0})(z) (a_{ij}) (\widehat{\rho}'_{\sigma_1} \circ \lambda ^{e_0})(z)^{-1} \text{ exists in } {\rm SL}(3, \C)\}.
	\end{split}
\end{equation}
Now for \((a_{ij}) \in P^{e_0} \cap P^{e_1}\), we get that
\begin{equation}\label{capSL1}
	\begin{split}
(\widehat{\rho}'_{\sigma_1} \circ \lambda ^{e_0})(z) (a_{ij}) (\widehat{\rho}'_{\sigma_1} \circ \lambda ^{e_0})(z)^{-1} & =\left( \begin{array}{ccccc} 
			z & 0 & 0   \\
			z-1 & 1 & 0  \\
			0	 & 0 & \frac{1}{z}  \
		\end{array} \right) 
	\left( \begin{array}{cccc} 
		a_{11} & 0 & a_{13} \\
		0 & a_{22} & a_{23}  \\
		0 & 0 & a_{33}\
	\end{array} \right)
	\left( \begin{array}{ccccc} 
		\frac{1}{z} & 0 & 0   \\
		\frac{1}{z}-1 & 1 & 0  \\
		0	 & 0 & z  \
	\end{array} \right)\\
&=\left( \begin{array}{cccc} 
	a_{11} & 0 & a_{13} z\\
	a_{11}-\frac{a_{11}}{z}+\frac{a_{22}}{z}-a_{22} & a_{22} & a_{13}(z-1)+a_{23}  \\
	0 & 0 & a_{33}\
\end{array} \right).
\end{split}
\end{equation}
The limit as \(z\) tends to zero exists if and only if \(a_{11}=a_{22}\) and \(a_{33}=\frac{1}{a_{11} \, a_{22}}\).
Finally, from Theorem \ref{Aut}, and equations \eqref{capSL} and \eqref{capSL1}, we have 
\begin{equation}\label{autproj}
	\begin{split}
		\text{Aut}_T(\mathcal{P}_{\phi})&= \bigcap\limits_{k=0}^3 P^{e_k}\\
		&= \left\lbrace  \left( \begin{array}{cccc} 
			a_{11} & 0 & a_{13} \\
			0 & a_{11} & a_{23}  \\
			0 & 0 & \frac{1}{a_{11}^2}\
		\end{array} \right) ~|~ a_{11} \in \C^*, \,  a_{13}, a_{23} \in \C \right\rbrace .
	\end{split}
\end{equation}
Hence the bundle automorphism group \(\text{Aut}_T(\mathcal{P}_{\phi})\), after extension of structure group, becomes strictly bigger.

\end{document}